\newtheorem{theorem}{Theorem}
\newtheorem{definition}{Definition}[section]
\newtheorem{proposition}{Proposition}[section]
\newtheorem{lemma}{Lemma}[section]
\newtheorem{corollary}{Corollary}[section]
\newtheorem{remark}{Remark}[section]
\newtheorem{problem}{Problem}[section]
\newtheorem{dream theorem}{Dream theorem}[section]
\numberwithin{equation}{section}
\newcommand{\diag}{\operatorname{diag}}
\newcommand{\supp}{\operatorname{supp}}
\newcommand{\Bad}{\mathbf{Bad}}
\newcommand{\bq}{\mathbf{q}}
\newcommand{\bx}{\mathbf{x}}
\newcommand{\bu}{\mathbf{u}}
\newcommand{\cM}{\mathcal{M}}
\newcommand{\bp}{\mathbf{p}}
\newcommand{\bmi}{\mathbf{m}}
\newcommand{\bv}{\mathbf{v}}
\newcommand{\bw}{\mathbf{w}}
\newcommand{\f}{\mathbf{f}}
\newcommand{\ba}{\mathbf{a}}
\newcommand{\by}{\mathbf{y}}
\newcommand{\Q}{\mathbb{Q}}
\newcommand{\Z}{\mathbb{Z}}
\newcommand{\R}{\mathbb{R}}
\newcommand{\N}{\mathbb{N}}
\newcommand{\ta}{\mathbf{\boldsymbol\theta}}
\newcommand{\sd}[1]{{\color{RubineRed} \sf SD: (#1)}}
\newcommand{\blue}[1]{{\color{blue}#1}}
\begin{document}
\title{Winning and nullity of inhomogeneous bad}

\author{Shreyasi Datta}
\address[Shreyasi Datta]{Department of Mathematics, University of York, UK}
\email{shreyasi.datta@york.ac.uk}
\author{Liyang Shao}
\address{Department of Mathematics, University of California, Berkeley, CA 94720-3840, United States}
\email{liyang$_{-}$shao@berkeley.edu,  shaoliyang755@gmail.com}

\date{}

\begin{abstract}
    We prove the hyperplane absolute winning property of weighted inhomogeneous badly approximable vectors in $\mathbb{R}^d$. This answers a question by Beresnevich--Nesharim--Yang and extends the main result of [Geometric
and Functional Analysis, 31(1):1–33, 2021] to the inhomogeneous set-up. 

We also show for any nondegenerate curve and nondegenerate analytic manifold that almost every point is not weighted inhomogeneous badly approximable for any weight. This is achieved by duality and the quantitative nondivergence estimates from homogeneous dynamics motivated by [Acta Math. 231 (2023), 1-30], together with the methods from [arXiv:2307.10109].

\end{abstract}

\maketitle
\section{Introduction}
%By Dirichlet's theorem, every real number can be approximated by countably many rational numbers $p/q$ at a rate of $1/q^2.$ In addition, Khintchine proved that we can make 1 in the numerator arbitrarily close to 0 except for a set of Lebesgue measure zero, while this set is of Hausdorff dimension 1 by Jarnik. These numbers are called \textit{badly approximable numbers}, which stem from Diophantine Approximation. Badly approximable numbers have been studied for centuries, and the definition has been extended to various situations. One of those is simultaneous inhomogeneous badly approximable vectors in $\R^d,$ which is the main topic we are going to discuss in this paper.
The main objective of this paper is to study inhomogeneous weighted badly approximable vectors as we define below. The article has two parts that are independent of each other and use techniques of different sorts. In the first part, we study \textit{Hyperplane absolute winning} property (which, in particular, depicts \textit{largeness}) of the set of our interest and in the second part we show that when intersected with suitable manifolds, the intersection has Lebesgue measure zero.

First, we recall some definitions. A vector $\bw=(w_1,\cdots,w_d)\in\R^d$ is called a \emph{weight vector} if  $w_i\geq 0$ for any $1\leq i\leq d$ and $\sum_{i=1}^d w_i=1.$ When all $w_i$'s are the same, we call $\bw=(1/d,\cdots, 1/d)$ the \emph{standard weight}. Given a weight vector $\bw\in\R^d$, and $\ta=(\theta_i)_{i=1}^d\in\R^d$, the set of inhomogeneous weighted badly approximable vectors is defined as follows,
\begin{equation}
\label{def: simul_inho}
    \mathbf{Bad}_{\ta}(\mathbf{w}):=\{\mathbf{x}=(x_i)_{i=1}^d\in\mathbb{R}^d:\liminf_{q\in\mathbb{Z}\setminus\{0\}, \vert q\vert \to\infty}\max_{1\leq i\leq d}\vert qx_i-\theta_i\vert_{\mathbb{Z}}^{1/w_i}   \vert q\vert>0 \}.
\end{equation}
Here $\vert \cdot\vert_{\Z}$ denotes the distance from the nearest integer. The Lebesgue measure of this set is zero; see \cite{BDGW_null}, while its Hausdorff dimension is full; see \cite{BRV_survey}. When $\ta=\mathbf{0}$, we refer the vectors in the above set as weighted badly approximable vectors and denote it by $\Bad(\bw)$. 

In the first part of this paper, we study $\Bad_{\ta}(\bw)$ intersected with \textit{fractals} which leads to Theorem \ref{thm: main}. In the second part, we study $\Bad_{\ta}(\bw)$ intersected with curves yielding Theorem \ref{measure-zero-theorem}. The proofs of these two theorems are independent of each other and methods are different.

\subsection{Hyperplane absolute winning}
In $\ta=\mathbf{0}$ case, Schmidt in \cite{Sch_Conj} conjectured that $\Bad(\bw)\cap\Bad(\bw')\neq\emptyset$ for two different weights $\bw,\bw'$ in $\R^2.$ This longstanding conjecture was settled by a groundbreaking work of Badziahin, Pollington and Velani in \cite{BPV2011} where the authors showed a stronger result, that the intersection of finitely many $\Bad(\bw)$ has full Hausdorff dimension. More generally compared to Schmidt's conjecture, Kleinbock in 1998 conjectured that the set of weighted badly approximable vectors $\Bad(\bw)$  has a stronger property of winning in the sense of Schmidt’s game; see \cite{Kleinbock_Duke_98}. In dimension $2$, this conjecture was settled by An in \cite{An}. In higher dimensions this problem was investigated by many authors in the past twenty years; see \cite{AGGL, KL-2016, EGL, GY, LDM, BMS2017, LW, KW13, KW10, BFKRW, BFK, ET, Mos11, Jim2, Fish09, KW2005, Da, Lei2019}. Recently in \cite{BNY21}, Beresnevich, Nesharim, and Yang solved this conjecture for arbitrary dimensions. Moreover, they show that $\Bad(\bw)\subset\R^d$ is \text{Hyperplane absolute winning} (abbreviated as \emph{HAW}), which is stronger than winning in the sense of Schmidt's game. HAW is a variant of winning according to Schmidt's game, introduced in \cite[\S 2]{BFK}; see \cite[Definition 6]{BNY21} for definition. Thus \cite{BNY21} resolves Kleinbock's conjecture in \cite{Kleinbock_Duke_98} assertively. This remarkable new developments in \cite{BNY21, BNY22} are our motivation for our first result. 

It is natural to seek if the above-mentioned properties are true more generally for $\mathbf{Bad}_{\ta}(\mathbf{w})$, for any $\ta\in \R^d$, the inhomogeneous set-up, as suggested/studied in \cite{BNY22}, \cite{ET}, \cite[Section 3]{BV13} and \cite{Kleinbock_inhomo}. In \cite{BNY22}, Beresnevich, Nesharim and Yang asked if their results can be generalized in the inhomogeneous set-up. This was also previously asked by Kleinbock in \cite{Kleinbock_inhomo} for the standard weight case. To be specific, analogous problem of Kleinbock's conjecture \cite{Kleinbock_Duke_98} in the inhomogeneous set-up is as follows :
\begin{problem}\label{prob}
    Is it true that $\mathbf{Bad}_{\ta}(\mathbf{w})$ is winning for every weight $\bw\in \R^d$ and every $\ta\in\R^d$?
\end{problem}

For general $\ta$ and general $\bw$  less has been known hitherto. In \cite{ET} it was shown in the case of linear forms, that for standard weight $\mathbf{Bad}_{\ta}(1/d,\cdots, 1/d)$ is winning for any $\ta$. For general weight, the only known result towards this was due to An--Beresnevich--Velani in \cite{ABV}, where $\mathbf{Bad}_{\ta}(\bw)\subset \R^2$ was shown to be winning for any weight $\bw\in \R^2.$ In arbitrary dimensions, authors show that countable intersection of sets of inhomogeneous weighted badly approximable vectors has full Hausdorff dimension, a weaker conclusion than in Problem \ref{prob}; see \cite[Corollary 1.2]{DS_curve}. Thus beyond dimension $2$, Problem \ref{prob} remained open.

In this paper, we answer Problem \ref{prob} and extend \cite[Theorem 3]{BNY21} of Beresnevich, Nesharim, and Yang to the inhomogeneous set-up. More generally, we consider $\ta:\R^d\to\R^d, \ta(\bx)=(\theta_i(x_i))$ to be a function and generalize the definition in \eqref{def: simul_inho} in a natural way: $\bx\in \Bad_\ta(\bw)$ if $\liminf_{q\in\mathbb{Z}\setminus\{0\}, \vert q\vert \to\infty}\max_{1\leq i\leq d}\vert qx_i-\theta_i(x_i)\vert_{\mathbb{Z}}^{1/w_i}   \vert q\vert>0$. Our main theorem in this direction is as follows.

\begin{theorem}
\label{thm: main}
    Let $\ta:\R^d\to\R^d$, with $\ta(\bx):=(\theta_i(x_i))$, $\theta_i:\R\to \R$ Lipschitz function and $\bw$ be a weight in $\R^d$. Then 
    $\mathbf{Bad}_{\ta}(\mathbf{w})$ is hyperplane absolute winning, in particular, winning.
\end{theorem}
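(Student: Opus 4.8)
The plan is to exhibit a winning strategy for Alice in the hyperplane absolute game whose outcome lies in $\Bad_\ta(\bw)$, following the scheme of \cite{BNY21} for the homogeneous case $\ta=\mathbf{0}$ and inserting a \emph{Lipschitz linearization} step to absorb the shift. I would begin with a reduction: if some $w_i=0$ the $i$-th coordinate is unconstrained, so (up to reordering) $\Bad_\ta(\bw)$ is a product of the corresponding lower-dimensional set with a Euclidean factor, and the hyperplane absolute game is compatible with such products — Bob's balls project to balls of the same radius, and a hyperplane neighborhood in the small factor pulls back to one in $\R^d$ — so we may assume $w_i>0$ for all $i$. Unwinding the definition, $\bx\in\Bad_\ta(\bw)$ precisely when there is $\epsilon_0>0$ such that $\bx$ avoids every ``bad box''
\[
\cB_{q,\bp}:=\Big\{\by\in\R^d:\ |qy_i-\theta_i(y_i)-p_i|<(\epsilon_0/|q|)^{w_i}\ \text{ for all }i\Big\},\qquad \bp\in\Z^d,
\]
for all sufficiently large $|q|$. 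So Alice must play so that the nested balls $B_0\supseteq B_1\supseteq\cdots$ have $\diam B_k\to0$ and the point $\bx_\infty=\bigcap_kB_k$ misses all $\cB_{q,\bp}$ with $|q|$ large; $\epsilon_0$ will be chosen small depending only on the game parameter $\rho$, the weight $\bw$, and a common Lipschitz constant $L$ for the $\theta_i$.

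\textbf{Lipschitz linearization.} This is the one new geometric ingredient. If $B=B(\bc,r)$ meets $\cB_{q,\bp}$, then for every $i$, using $|\theta_i(y_i)-\theta_i(c_i)|\le Lr$ on $B$,
\[
\cB_{q,\bp}\cap B\ \subseteq\ \Big\{\by:\ \big|y_i-\tfrac{p_i+\theta_i(c_i)}{q}\big|<\tfrac{(\epsilon_0/|q|)^{w_i}+Lr}{|q|}\Big\}.
\]
Thus, inside $B$, each bad box is trapped in a thin neighborhood of the affine hyperplane $\{y_i=(p_i+\theta_i(c_i))/q\}$; and as $\bp$ ranges over $\Z^d$ with $q$ fixed, these hyperplanes are the level-$q$ homogeneous family $\{y_i=p_i/q\}$ translated by $(\theta_i(c_i)/q)_i$ — of size $<1/q$ per coordinate after absorbing integer parts into $\bp$ — and further thickened by $Lr/|q|$. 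In short, on a small ball the inhomogeneous configuration is a uniformly small perturbation of the homogeneous one, with both the translation and the extra thickness bounded by $O(r/|q|)$.

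With this in hand, the combinatorial core of \cite{BNY21} should apply with only cosmetic changes. At Bob's ball $B_k=B(\bc_k,r_k)$ one isolates, as in \cite{BPV2011,BNY21}, the dyadic range of denominators $q$ that is ``critical at scale $r_k$'' (so that across the stages these ranges exhaust all large $|q|$), and shows — running their counting on the perturbed, thickened boxes supplied by the linearization — that the finitely many critical bad boxes meeting $B_k$ are covered by boundedly many hyperplane neighborhoods, of thickness $\lesssim\epsilon_0^{w_{i_0}}r_k+Lr_k^2$ where $i_0$ realizes the largest $(\epsilon_0/|q|)^{w_i}$. Alice's move at stage $k$ then deletes the $\big((\epsilon_0/|q|)^{w_{i_0}}+Lr_k\big)/|q|$-neighborhood of $\{y_{i_0}=(p_{i_0}+\theta_{i_0}(c_{k,i_0}))/q\}$ attached to the worst box. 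Since $r_k\to0$ and the critical denominator ranges tile the large integers, the outcome $\bx_\infty$ avoids $\cB_{q,\bp}$ for all large $|q|$, i.e.\ $\bx_\infty\in\Bad_\ta(\bw)$.

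Two points need checking, the second being the real work. Parametrically, the neighborhood Alice deletes at stage $k$ has thickness $\big((\epsilon_0/|q|)^{w_{i_0}}+Lr_k\big)/|q|\lesssim\epsilon_0^{w_{i_0}}r_k+Lr_k^2$ (using $|q|\gtrsim r_k^{-1}$ on the critical range), which is $\le\rho\,r_k$ once $\epsilon_0$ is small and $k$ is large enough that $Lr_k$ is small; at the finitely many earlier stages, where only bounded $|q|$ matter, Alice makes harmless moves. Crucially the Lipschitz term is \emph{quadratic} in $r_k$, hence automatically negligible — this is why a globally Lipschitz shift, with no smallness assumed on $L$, is enough. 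The genuine obstacle is to confirm that the counting and separation estimates of \cite{BNY21} survive the perturbation: their homogeneous argument leans on sharp arithmetic inequalities between distinct boxes (separation of the rationals $p_i/q$, the nested tree structure of critical rationals across scales), and replacing $p_i/q$ by $(p_i+\theta_i(c_{k,i}))/q$ and thickening by $Lr_k/|q|$ perturbs each such quantity by $O(Lr_k)=o(1)$, far below the homogeneous separation, so every strict inequality persists — at the cost only of shrinking $\epsilon_0$ and discarding finitely many initial scales. Making this precise amounts to re-running the argument of \cite{BNY21} with the perturbed boxes in place of the homogeneous ones while tracking the $O(Lr_k)$ errors; this is routine but not entirely free, and it is where the bulk of the technical effort will go.
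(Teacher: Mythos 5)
Your overall architecture is right — reuse the homogeneous strategy of \cite{BNY21} and add extra deletions for the inhomogeneous obstructions, localizing the Lipschitz shift by freezing $\theta_i$ at the center of each small region (this matches the paper's partition of $B_0$ into rectangles $B_0^{j,n}$ and Lemma \ref{compare dangerous domain}). But the step you yourself flag as ``the real work'' contains a genuine gap, and it is exactly the point where the paper has to do something new.

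You argue that replacing the homogeneous centers $p_i/q$ by $(p_i+\theta_i(c_{k,i}))/q$ perturbs the separation estimates of \cite{BNY21} by $O(Lr_k)=o(1)$, ``far below the homogeneous separation, so every strict inequality persists.'' This conflates two different quantities. The \emph{linearization error} (the variation of $\theta_i$ over the ball) is indeed $O(Lr_k)$ and harmless. But the \emph{shift itself} moves each center by $\theta_i(c_{k,i})/q$, which is of the same order as the spacing $1/q$ of the rational grid at level $q$; the whole configuration is translated by an amount comparable to its mesh, not perturbed by $o(1)$ relative to it. Concretely, the homogeneous separation arguments rest on the fact that for two distinct rationals $\bp_1/q_1\neq\bp_2/q_2$ the numerator of the difference is a nonzero integer, hence $\geq 1$; for inhomogeneous centers the corresponding numerator acquires a term $\theta\cdot(q_2-q_1)$ which is not an integer, and no lower bound survives. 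So the counting/separation estimates of \cite{BNY21} do \emph{not} carry over by continuity, and the claim that at each stage only boundedly many bad boxes need to be deleted is unsupported.

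The paper closes this gap differently: given two dangerous rationals $\bp_1/m_1,\bp_2/m_2$ in the same critical denominator range meeting the surviving set, it takes the \emph{difference} $(m_1-m_2,\bp_1-\bp_2)$, in which the inhomogeneous terms cancel up to the Lipschitz error, producing a nonzero integer solution to a purely homogeneous system of inequalities (Proposition \ref{final lemma-part1}). That system is then ruled out by combining Mahler's transference lemma (Lemma \ref{transfer}, Corollary \ref{dual}, Proposition \ref{result}) with the dynamical information that points surviving the homogeneous deletions stay in a fixed compact set $K_\xi$ of the space of lattices (Lemma \ref{bounded}). This ``at most one dangerous rational per stage'' statement, not a perturbation of the homogeneous separation, is what makes the extra deletions legal; the absolutely decaying property of the measure is then used to bound the covering number of the single remaining slab. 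Your sketch has no substitute for this differencing-plus-transference step, and without it the strategy cannot be shown to be legal.
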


     %Theorem \ref{thm: main} also generalises \cite{ET} to the weighted set-up in case of vectors. 

The \textit{Hyperplane absolute winning} property brings out many nice sub-properties \cite{BFKRW} including invariance under countable intersection, diffeomorphisms, and the full dimension within the support of the \textit{Ahlfors regular absolutely decaying measures} (see \S \ref{sec2} for definition).
%It is well known that in a metric space,
%\begin{enumerate}\item A countable intersection of HAW sets is still HAW\item Moreover, if the space is a differential manifold, the image of a HAW set under $C^1$ diffeomorphisms is still HAW\end{enumerate}
As a consequence of Theorem \ref{thm: main} together with \cite[Lemma 1.3]{BNY22} we obtain:
\begin{corollary}
    Let $\{\bw_j\}_{j=1}^{\infty}$ be a sequence of weights in $\R^d$, and $\{\ta^j\}_{j=1}^{\infty}$ be a sequence in $\R^d$
    %where each $\ta^j=(\theta_i^j)_{i=1}^{d}\in\R^d$, 
    and $\{f_j\}_{j=1}^{\infty}$ be $C^1$-diffeomorphisms of $\R^d$. Then 
    \begin{equation*}
        \bigcap_{j=1}^{\infty}f_j(\Bad_{\ta^j}(\bw_j))
    \end{equation*}
    is hyperplane absolute winning. In particular, for every Ahlfors regular absolutely decaying measure $\mu$ on $\R^d,$ we have

    $$\dim( \bigcap_{j=1}^{\infty}f_j(\Bad_{\ta^j}(\bw_j))\bigcap\supp\mu)=\dim(\supp\mu),$$ where $\dim$ denotes the Hausdorff dimension.
\end{corollary}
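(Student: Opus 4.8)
The plan is to deduce the Corollary from Theorem \ref{thm: main} by invoking the abstract stability properties of the HAW property, which I may take as black boxes from the literature cited in the excerpt. First I would recall that, by \cite{BFKRW}, the collection of hyperplane absolute winning subsets of $\R^d$ is closed under countable intersections and that the HAW property is preserved by $C^1$ diffeomorphisms of $\R^d$; this is precisely the role of \cite[Lemma 1.3]{BNY22}, which asserts that if $E\subseteq\R^d$ is HAW and $f$ is a $C^1$ diffeomorphism of $\R^d$ then $f(E)$ is HAW. Granting these, the argument is short: Theorem \ref{thm: main} applies with $\ta=\ta^j$ (the constant function $\bx\mapsto\ta^j$ is visibly Lipschitz, with all $\theta_i$ constant) and weight $\bw_j$ to give that each $\Bad_{\ta^j}(\bw_j)$ is HAW; applying \cite[Lemma 1.3]{BNY22} to the $C^1$-diffeomorphism $f_j$ yields that $f_j(\Bad_{\ta^j}(\bw_j))$ is HAW for every $j$; and taking the countable intersection over $j$ preserves HAW. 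This proves the first assertion.

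For the dimension statement I would then invoke the second standard consequence of HAW: if $\mu$ is an Ahlfors regular absolutely decaying measure on $\R^d$ and $E\subseteq\R^d$ is HAW, then $E\cap\supp\mu$ has full Hausdorff dimension equal to $\dim(\supp\mu)$. This is again part of the package of properties established in \cite{BFKRW} (and recalled in the present paper's \S\ref{sec2}); for Ahlfors $\delta$-regular $\mu$ one has $\dim(\supp\mu)=\delta$, and the absolutely decaying and Federer conditions guarantee that a hyperplane absolute winning set remains winning for the induced game on $\supp\mu$, whence its intersection with $\supp\mu$ has dimension $\delta$. Combining this with the already-established fact that $\bigcap_j f_j(\Bad_{\ta^j}(\bw_j))$ is HAW gives
\[
\dim\Bigl(\bigcap_{j=1}^{\infty}f_j(\Bad_{\ta^j}(\bw_j))\cap\supp\mu\Bigr)=\dim(\supp\mu),
\]
as claimed. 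The reverse inequality $\leq$ is trivial since the set on the left is contained in $\supp\mu$.

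There is essentially no obstacle here beyond correctly citing the stability lemmas, since the Corollary is a formal consequence of Theorem \ref{thm: main} together with known structural properties of the HAW class; the only minor point to check is that the constant inhomogeneity vectors $\ta^j$ fall under the hypotheses of Theorem \ref{thm: main}, which they do trivially as Lipschitz (indeed constant) functions. If one wanted to be scrupulous, one could also note that the $f_j$ being diffeomorphisms of all of $\R^d$ (rather than merely locally) is what makes \cite[Lemma 1.3]{BNY22} directly applicable, and that countable intersections interact well with the dimension formula because each $f_j(\Bad_{\ta^j}(\bw_j))$ is individually HAW, so no uniformity across $j$ is needed.
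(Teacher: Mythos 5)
Your proposal is correct and follows the same route as the paper: the corollary is deduced from Theorem \ref{thm: main} (each constant $\ta^j$ being trivially Lipschitz) together with the standard stability package for HAW sets — invariance under $C^1$ diffeomorphisms, closure under countable intersections, and full dimension on supports of Ahlfors regular absolutely decaying measures — exactly as cited via \cite[Lemma 1.3]{BNY22} and \cite{BFKRW}.
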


The proof of Theorem \ref{thm: main} uses a method introduced in \cite{BNY21}, namely that it is enough to show that $\Bad_{\ta}(\bw)$ intersected with Alhfors regular absolutely decaying measures is \textit{Cantor winning}, due to a game coined in \cite{BHNS}. The goal is to get a \textit{Cantor winning strategy} to remove obstructions for a point to be in the respective bad set. Our idea is to introduce a new winning strategy that incorporates the strategy as in the homogeneous case \cite{BNY21}, together with removing obstruction for a point to be in the inhomogeneous set $\Bad_{\ta}({\bw})$. In order to do this, the main challenge is to find a way to merge the inhomogeneous constraints into the homogeneous winning strategy in \cite{BNY21}; a philosophy that was used in \cite{BV13, ABV, DS_curve}. In all of these works \cite{BV13, ABV, DS_curve}, either it was done in dimension $1$ or a weaker game was played. Our main innovation is to adopt this philosophy even in this stronger game set-up. It is worth pointing out that in the homogeneous case \cite{BNY21}, absolutely decaying  property of  measures is required in order to use \textit{quantitative nondivergence} estimates from \cite{KLW}. In our case, we also require this property crucially in order to show that the obstructions to be in $\Bad_{\ta}(\bw)$ can be legally removed. Moreover, we can take $\ta$ to be a function in Theorem \ref{thm: main} in higher dimensions, which requires a new idea. To tackle the inhomogeneous part as a function in higher dimensions, we divide the space into rectangles suitably at every step of the game. As the game continues, these rectangles become smaller in size. The idea is to treat the inhomogeneous function $\ta$ as a constant function locally, at every step.

%\textit{dangerous rectangles} from $\Bad(\bw)$ as 

    %The proof of Theorem \ref{thm: main}
   % uses the winning strategy developed in \cite{BNY21} together with a new strategy that we introduce here. The main challenge is to find a way to merge the inhomogeneous constraints into the homogeneous winning strategy in \cite{BNY21}; a philosophy that was used in \cite{BV13, ABV, DS_curve}. \blue{Thus in order to get a} \sd{Expand more} Another idea in the proof of Theorem \ref{thm: main} is how to tackle the inhomogeneous part as a function in higher dimensions. For this, as the game continues we divide the balls into rectangles suitably at every step of the game. 
    
    %Although in \cite{DS_curve}, similar ideas were explored in dimension $1$, in higher dimensions one needs to be more careful as we deal with a general weight.

\subsection{Inhomogeneous Bad is null}
We look into the measure-theoretic property of $\Bad_{\ta}(\bw)$ complementing Theorem \ref{thm: main}. In particular, we study $\Bad_{\ta}(\bw)$ intersected with a manifold. The problem of interest is as follows.
\begin{problem}\label{prob_null}
    Let $\ta\in\R^n$ and $\bw$ be any weight in $\R^n$ and $\mathcal{M}$ be a submanifold in $\R^n$. Under what condition on $\mathcal{M}$, almost every point in $\mathcal{M}$ is not in $\Bad_{\ta}(\bw)?$
\end{problem}
In case of $\ta=\mathbf{0}$, and $\bw$ being standard weight the set of badly approximable vectors inside \textit{nondegenerate} manifolds (see \S \ref{null-sec} for definition) was shown to be of measure zero in various works \cite{Khi25,BBKM02,B12,Shah09}. These works (except Khintchine's result \cite{Khi25}) rely on homogeneous dynamics. In a recent work \cite{BDGW_null}, for $\ta=\mathbf{0}$ and any weight $\bw$, it was shown that for any $C^2$ manifold with some mild condition on tangent, almost every point is not in $\Bad(\bw).$ The proof in \cite{BDGW_null} does not use any tools from homogeneous dynamics, rather relies on a new idea of establishing \textit{constant invariance}.

In the case of $\ta$ not necessarily $\mathbf{0}$, i.e., in the inhomogeneous set-up, none of the above approaches are yet fully developed and have natural obstructions. There are only limited results, and all follow from a stronger result of establishing the divergence part of a Khintchine type results; see \cite[\S 1]{BDGW_null} for discussion around this. The only known result toward this is due to \cite{BVV11,BVVZ21} in case of standard weight, in particular, they show almost every point in a nondegenerate curve is not in $\Bad_{\ta}(1/d,\cdots,1/d).$ 

Our next theorem settles Problem \ref{prob_null} for any nondegenerate curve and any nondegenerate analytic manifold.

\begin{theorem}\label{measure-zero-theorem}
    Let $\bw$ be a weight in $\R^n$, $\ta\in\R^n$ and $\cM$ be a nondegenerate curve in $\R^n$. Then almost every point on $\cM$ is not in $\mathbf{Bad}_{\ta}(\bw)$.
\end{theorem}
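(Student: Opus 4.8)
The plan is to deduce the curve case of Theorem~\ref{measure-zero-theorem} from a dual statement about linear forms via the transference principle, and then to kill the resulting dual ``inhomogeneous badly approximable'' set by a quantitative nondivergence argument in the space of grids. Concretely, for $\bx$ on the curve $\cM$ and a weight $\bw$, being in $\Bad_\ta(\bw)$ is equivalent (after the usual inhomogeneous Dani-type correspondence) to saying that a certain $\ta$-translated orbit $\{g_t u_{\f(s)} \Lambda_\ta : t \ge 0\}$ in $\mathrm{ASL}_{n+1}(\R)/\mathrm{ASL}_{n+1}(\Z)$ stays in a fixed compact set (equivalently, avoids a shrinking neighborhood of the cusp), where $g_t = \diag(e^{t}, e^{-w_1 t}, \dots, e^{-w_n t})$ and $u_{\f(s)}$ is the unipotent encoding the point $\f(s)$ on the curve. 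I would first make this correspondence precise in the inhomogeneous weighted setting, following the discussion referenced from \cite{BDGW_null} and the duality used in \cite{BVV11,BVVZ21}: the point is that $\Bad_\ta(\bw)$ on the curve has full measure only if a positive-measure set of parameters $s$ gives bounded (hence nondivergent in a strong quantitative sense) trajectories.

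Next, I would invoke the quantitative nondivergence machinery. The key input is that for a nondegenerate curve, the map $s \mapsto u_{\f(s)}$ is \emph{good} and \emph{nonplanar} in the sense of Kleinbock--Margulis / \cite{KLW}, so one has the $(C,\alpha)$-good estimates for the functions measuring the length of the shortest vectors of the lattice $g_t u_{\f(s)} \Lambda_\ta$. The inhomogeneous shift $\Lambda_\ta$ (a grid rather than a lattice) is handled by passing to the lattice in one higher dimension, or by using the version of quantitative nondivergence for grids as developed in the circle of ideas around \cite{BDGW_null} and the Acta Math.\ reference mentioned in the abstract. The upshot is a bound of the form: for every $\epsilon>0$ and $t$ large,
\begin{equation*}
\bigl|\{s : g_t u_{\f(s)}\Lambda_\ta \text{ is } \epsilon\text{-deep in the cusp}\}\bigr| \le C \epsilon^{\alpha},
\end{equation*}
uniformly in $t$; then a Borel--Cantelli / measure-continuity argument along $t\to\infty$ shows that the set of $s$ with a bounded trajectory (no escape to the cusp, at \emph{any} fixed rate) has measure zero. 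This directly contradicts $\Bad_\ta(\bw)\cap\cM$ having full measure, since $\bx=\f(s)\in\Bad_\ta(\bw)$ forces exactly such non-escape.

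There are two steps I expect to be the real obstacles. The first is the inhomogeneous transference itself: in the weighted, inhomogeneous regime the clean equivalence ``$\bx\in\Bad_\ta(\bw)$ iff the trajectory is bounded'' is not literally what one gets — one typically gets only a one-sided implication, or an implication relating $\Bad_\ta(\bw)$ to a \emph{dual} inhomogeneous bad set for linear forms (with the transposed weight), and one must be careful that the inhomogeneous parameter transfers correctly and that the exceptional null sets coming from the transference (e.g.\ rational dependencies, or the $\ta$ lying in a bad position relative to $\bx$) are genuinely null on the curve. This is where I would lean on the methods of \cite{BVVZ21} and the arXiv reference \cite{DS_curve}-adjacent techniques cited in the abstract. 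The second obstacle is verifying the nondivergence hypotheses for the grid $g_t u_{\f(s)} \Lambda_\ta$ rather than a lattice: absolute continuity / decay properties of the pushforward measure on the parameter $s$ must be combined with the nondegeneracy of $\cM$ to get the $(C,\alpha)$-good property for \emph{all} the relevant subspace functions, including those involving the shift vector $\ta$; the analytic-manifold case (mentioned in the abstract but not in the statement quoted here) would additionally require the stronger nonplanarity/nondegeneracy available for analytic maps. Once these two inputs are in place, the Borel--Cantelli conclusion is routine.
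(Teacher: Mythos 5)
There is a genuine gap, and it sits at the heart of your plan: the quantitative nondivergence estimate points in the wrong direction for the conclusion you need. Killing $\mathbf{Bad}_{\ta}(\bw)\cap\cM$ requires showing that for almost every parameter $s$ and \emph{every} $\epsilon>0$ the translated grid $g_t u_{\f(s)}\Lambda_\ta$ acquires a point within $\epsilon$ of the origin at some (indeed infinitely many) times $t$. Quantitative nondivergence (Kleinbock--Margulis/BKM/KLW, including its grid versions) gives the opposite kind of information: an upper bound $C\epsilon^{\alpha}$ on the measure of parameters whose orbit has such a point at a fixed time $t$. From uniform-in-$t$ upper bounds on these hitting sets, no Borel--Cantelli argument can conclude that the set of parameters which \emph{never} hit has measure zero; on the contrary, at each single time the non-hitting set has measure at least $1-C\epsilon^{\alpha}$, and upper bounds of this type are exactly what one uses to prove convergence/extremality statements (a.e.\ point is \emph{not too well} approximable), not statements that a.e.\ point is well approximable infinitely often. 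A secondary but related error is the claimed correspondence ``$\bx\in\Bad_\ta(\bw)$ iff the grid orbit is bounded'': in the space of grids, boundedness is governed by the underlying lattice, whereas $\Bad_\ta(\bw)$ corresponds to the orbit avoiding a neighborhood of the set of grids containing $\mathbf{0}$, which is not a cusp-excursion condition; so the object your nondivergence estimate controls is not the object your transference step produces.

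The paper circumvents exactly this difficulty, and the mechanism is worth internalizing. Quantitative nondivergence (\cite{BKM}) is applied not to the grid but to the \emph{dual} lattice $g_{c,k}^{\star}g_t^{\star}u_1(\bx)^{\star}\Z^{n+1}$: it shows that the set of $\bx$ for which the dual has a vector shorter than $c$ for \emph{all} large $t$ is null, so for a.e.\ $\bx$ there are infinitely many times at which, by the duality of successive minima (\cite{BY}), \emph{all} $n+1$ minima of the primal lattice $g_{c,k}g_t u_1(\bx)\Z^{n+1}$ are bounded by $\asymp c^{-1}$. Having a full system of short independent lattice vectors at those times is what lets one handle the inhomogeneous shift: one expands $(\ta',0)$ in that basis and rounds the coefficients, producing integer vectors that solve the inhomogeneous system with a fixed (possibly large) constant, with the graph directions of the Monge parametrization kept small. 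The constant is then driven to zero not by dynamics but by the constant-invariance Lemma \ref{CI_product} from \cite{BDGW_null}, and the curve case of Theorem \ref{measure-zero-theorem} follows after choosing the parametrizing coordinate to carry the maximal weight (condition \eqref{maxlessmin}); the analytic higher-dimensional case is then obtained by a fibering argument. If you want to salvage your route, you would need a genuinely different input than nondivergence --- e.g.\ an equidistribution or divergence-type counting statement along the curve --- since no upper bound on hitting measures can, by itself, show that the avoiding set is null.
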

 As a consequence we get the following corollary.
 
\begin{corollary}\label{analytic}\footnote{We thank Victor Beresnevich for suggesting Corollary \ref{analytic} and that it follows from Theorem \ref{measure-zero-theorem}.}
    Let $\bw$ be a weight in $\R^n$, $\ta\in\R^n$ and $\cM$ be a nondegenerate analytic manifold in $\R^n$. Then almost every point on $\cM$ is not in $\mathbf{Bad}_{\ta}(\bw)$.
\end{corollary}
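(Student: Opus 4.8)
The plan is to deduce Corollary~\ref{analytic} from Theorem~\ref{measure-zero-theorem} by fibering a nondegenerate analytic manifold into nondegenerate analytic curves and then invoking Fubini's theorem; analyticity of $\cM$ is used precisely to make this fibering available.

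First I would pass to a local statement. Since $\cM$ is covered by countably many analytic charts and a countable union of null sets is null, it suffices to treat $\cM=\f(U)$ for a single analytic, nondegenerate map $\f\colon U\to\bbR^n$ with $U\subseteq\bbR^m$ open and connected. If $m=1$ this is Theorem~\ref{measure-zero-theorem} verbatim, so assume $m\geq 2$; the border case $m=n$, where $\cM$ is an open subset of $\bbR^n$, is covered by the same argument below (with translated moment curves $t\mapsto(t,t^2,\dots,t^n)$ as fibres) and also already follows from the fact that $\Bad_{\ta}(\bw)$ is Lebesgue-null.

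Second I would invoke the standard fibering of a nondegenerate analytic manifold by nondegenerate analytic curves: after shrinking $U$, there is an analytic diffeomorphism $\Phi\colon V\times I\to U$ with $V\subseteq\bbR^{m-1}$ open and $I\subseteq\bbR$ an interval, such that for Lebesgue-almost every $\alpha\in V$ the analytic curve $t\mapsto(\f\circ\Phi)(\alpha,t)$ is nondegenerate in $\bbR^n$. Concretely one can take $\Phi(\alpha,t)=(\alpha,0)+\psi(t)$ for a fixed analytic curve $\psi$ (for instance a polynomial curve of sufficiently high degree) with $\psi(0)=0$ and $\psi'(0)=\mathbf{e}_m$; nondegeneracy of $\f$ then forces $t\mapsto\f\bigl((\alpha,0)+\psi(t)\bigr)$ to be nondegenerate for almost every translate $\alpha$, because an analytic map not contained in any affine hyperplane can become degenerate along such a fixed curve only on a null set of translates. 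This is the one substantive geometric input and is well known; see the fibering lemmas of Beresnevich and of Beresnevich--Yang.

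Third I would carry out the measure-theoretic bookkeeping. The set $\Bad_{\ta}(\bw)$ is Borel --- in fact $F_{\sigma}$, being $\bigcup_{c\in\bbQ_{>0}}\bigcup_{Q\in\N}\bigcap_{|q|\ge Q}\{\bx:\max_i|qx_i-\theta_i|_{\bbZ}^{1/w_i}|q|\ge c\}$ with each innermost set closed --- so $E:=(\f\circ\Phi)^{-1}(\Bad_{\ta}(\bw))$ is Borel in $V\times I$. For almost every $\alpha\in V$ its slice $E_\alpha=\{t\in I:(\f\circ\Phi)(\alpha,t)\in\Bad_{\ta}(\bw)\}$ consists of the parameters of a nondegenerate analytic curve that land in $\Bad_{\ta}(\bw)$, hence has one-dimensional Lebesgue measure zero by Theorem~\ref{measure-zero-theorem}. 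By Fubini, $E$ is $m$-dimensionally null, and since $\f\circ\Phi$ is an analytic diffeomorphism onto an open subset of $\cM$ (so the associated Riemannian density and the coordinate Jacobian are positive and locally bounded), $\Bad_{\ta}(\bw)\cap\f(\Phi(V\times I))$ is null in $\cM$; summing over the countable chart cover finishes the proof. The main obstacle is the fibering step --- producing fibres that are nondegenerate \emph{in} $\bbR^n$ and not merely immersed --- and this is exactly where the analyticity assumption in the corollary is needed; the remaining steps are routine.
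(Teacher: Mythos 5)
Your overall route is the same as the paper's: fiber the analytic manifold into analytic curves, argue the fibres are nondegenerate so that Theorem \ref{measure-zero-theorem} applies to almost every fibre, and finish with Fubini and a change of variables (the measurability and Jacobian bookkeeping you describe is fine and matches what the paper does). The difference, and the problem, is the fibering step itself. Your concrete proposal is to take translates of one fixed curve, $\Phi(\alpha,t)=(\alpha,0)+\psi(t)$, justified by the assertion that ``an analytic map not contained in any affine hyperplane can become degenerate along such a fixed curve only on a null set of translates.'' That assertion is false as a general principle. Take $m=2$, $n=3$, $\f(x_1,x_2)=(x_1,x_2,x_2^2)$, which is analytic and $2$-nondegenerate at every point, and take $\psi(t)=(t^2,t)$, so $\psi(0)=0$ and $\psi'(0)=\mathbf{e}_2$ as in your normalization. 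Then $\f\bigl((\alpha,0)+\psi(t)\bigr)=(\alpha+t^2,\,t,\,t^2)$, and the identity $(\alpha+t^2)-t^2-\alpha\cdot 1\equiv 0$ shows that \emph{every} translate of this fibre lies in an affine hyperplane of $\R^3$, i.e.\ every fibre curve is degenerate. So degeneracy along a fixed curve is not automatically confined to a null set of translates; the curve must be chosen adapted to $\f$, and your hedge ``a polynomial curve of sufficiently high degree'' is exactly the nontrivial point. Producing, for an arbitrary nondegenerate analytic $\f$, a one-parameter family of curves inside the domain whose compositions with $\f$ stay (almost surely) linearly independent together with $1$ is precisely the content of Beresnevich's Fibering Lemma, and it is not a one-line genericity remark: the lemma uses the scaling family $(t,\bu)\mapsto(t^{1+s^d},u_2t^{s+s^d},\dots,u_dt^{s^{d-1}+s^d})$ with the exponent $s=s(\f)$ chosen so that linear independence survives for all admissible $\bu$, not translates of a fixed curve.

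This is the paper's proof as well: it quotes the Fibering Lemma \cite[p.~1206]{Beresnevich2015} to get, for every parameter $\bu$, linear independence of $1,f_{\bu,1},\dots,f_{\bu,n}$ along the fibre, uses analyticity to upgrade this to nondegeneracy of the fibre curve at almost every $t$, applies Theorem \ref{measure-zero-theorem} on each fibre, and concludes by Fubini and the (measure-class preserving) change of variables $\phi$. If you replace your translate-fibering by the actual Fibering Lemma, the rest of your argument goes through and coincides with the paper's; as written, however, the key geometric step is supported by a claim that has counterexamples, so the proof has a genuine gap there. (A possible repair in your framework: by a Wronskian argument the set of bad parameters is null as soon as \emph{one} fibre in the analytic family is nondegenerate, but exhibiting even one such curve through a given point again requires the Fibering Lemma or an argument of comparable substance.)
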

\begin{remark}
We show a slightly more general result for higher dimensional manifolds; see Theorem \ref{thm: more general} where analyticity can be dropped if we assume $\bw$ restrictively. Moreover, one can show the same conclusion as in Corollary \ref{analytic}, by putting an abstract condition on $\mathcal{M}$, namely that $\mathcal{F}$ as in \eqref{def: F} has full measure.
 \end{remark}

\begin{remark} It is possible extend Theorem \ref{measure-zero-theorem} from constant $\ta$ to a function $\ta$ using the same technique as in Theorem \ref{thm: main}. We leave it for an enthusiastic reader.
\end{remark}
Theorem \ref{measure-zero-theorem} relies on showing the constant invariance as in \cite{BDGW_null}, but the way it was achieved in the homogeneous case $\ta=\mathbf{0}$ in \cite{BDGW_null} fails at the very first step. In \cite{BDGW_null} constant invariance property is obtained via a projection trick which relies on Minkowski's convex body theorem. In the inhomogeneous set-up, due to the lack of such convex body theorem, it is not clear how the same method can be applied. Thus the inhomogeneous problem becomes difficult to achieve without extra condition on the manifold (and/or on the weight). The new idea in this paper is to combine techniques from \cite{BDGW_null} and homogeneous dynamics via \textit{duality} introduced in \cite{BY}. The combination of such ideas is the novelty of the proof. Also the proof of Theorem \ref{measure-zero-theorem}, unlike $\ta=\mathbf{0}$ in \cite{BDGW_null}, involves Quantitative nondivergence from \cite{BKM} crucially. %The main novelty of this proof relies on a combination of ideas from both \cite{BY} and \cite{BDGW_null}. In particular, we use of duality that was introduced in \cite{BY} and the constant invariance ideas in \cite{BDGW_null}.

%\begin{remark}
    %Note that in the case of curves, the above theorem does not have any restriction on weight. Also, note that even for standard weight Theorem \ref{measure-zero-theorem} is new.
%\end{remark}

\section{Preliminaries}
\label{sec2}
\subsection{Preliminaries for Theorem  \ref{thm: main}} Given a metric space $(X,d)$, any $S\subset X$ and $r>0,$ we denote the closed $r $ neighborhood of $S$ by
\begin{equation*}
    B(S,r):=\{x\in X,d(x,S)\leq r\}.
\end{equation*}
A closed ball $B(x_0,r):=\{x\in X,d(x,x_0)\leq r\}$ is defined by a fixed center $x_0$ and a radius $r>0.$  Also, we denote $\bx=(x_i)\in\R^d$ where $x_i\in\R$. Suppose $S=B_1\times \cdots\times B_d$, where each $B_i$ is an interval in $\R$, then we call $S$ to be a rectangle and by $cS$ we mean $c B_1\times \cdots \times cB_d$.
First, we recall some definitions.

Since each $\theta_i, i=1,\cdots, d,$ is a Lipschitz function in Theorem \ref{thm: main}, there exists $\mathfrak{m}>0$ such that $\vert\theta_i(x)-\theta_i(y)\vert\leq \mathfrak{m}\vert x-y\vert, i=1,\cdots, d, \forall x,y\in\R$.
\begin{definition}
\label{ahlfors}
    A Borel measure $\mu$ on $\R^d$ is called $\alpha$-Ahlfors regular if there exists $A,r_0>0$ such that for any ball $B(\bx,\rho)\subset\R^d$ where $\bx\in\supp\mu$ and $\rho\leq r_0$, we have $$
    A^{-1}\rho^{\alpha}\leq\mu(B(x,\rho))\leq A\rho^{\alpha}.
    $$
\end{definition}
\begin{definition}
    \label{def: absolutely decaying}
    A Borel measure $\mu$ on $\R^d$ is called absolutely decaying if there exists $D,\delta>0$ and $r_0>0$ such that for any $\bx\in\supp \mu,0<r\leq r_0$, any hyperplane $H\subset\R^d$ and $r'>0$, we have that
    \begin{equation}
    \label{absolutely decaying}
        \mu(B(H,r')\cap B(\bx,r))\leq D\left(\frac{r'}{r}\right)^{\delta}\mu(B(\bx,r)).
    \end{equation}
\end{definition}

\begin{definition}\label{cantor game}
    Let $X$ be a complete metric space. The $\gamma$-Cantor potential game is played by two players, say Alice and Bob, who take turns making their moves. Bob starts by choosing a parameter $0<\beta<1$, which is fixed throughout the game, and a ball $B_0\subset X$ of radius $r_0>0$. Subsequently for $n=0,1,2,\cdots$, first, Alice chooses collections $\mathcal{A}_{n+1,i}$ of at most $\beta^{-\gamma(i+1)}$ balls of radius $\beta^{n+1+i}r_0$ for every $i\geq 0$. Then, Bob chooses a ball $B_{n+1}$ of radius $\beta^{n+1}r_0$ which is contained in $B_n$ and disjoint from $\bigcup_{0\leq l\leq n}\bigcup_{A\in\mathcal{A}_{n+1-l,l}}A.$ If there is no such ball then Alice wins by default. Otherwise, the outcome of the game is the unique point in $\cap_{n\geq 0}B_n.$

    A set $S\subset X$ is called $\gamma$-Cantor winning if Alice has a strategy that ensures that she either wins by default or the outcome lies in $S.$ If $X$ is the support of an $\alpha$-Ahlfors regular measure, then $S\subset X$ is called Cantor winning if it is $\gamma$-Cantor winning for some $0\leq\gamma<\alpha.$
\end{definition}
We recall the following theorem.
\begin{theorem}{\cite[Theorem 21]{BNY21}}
\label{nonempty}
    Let $X$ be the support of an $\alpha$-Ahlfors regular measure and let $S\subset X$ be Cantor winning, then $S\neq \emptyset.$
\end{theorem}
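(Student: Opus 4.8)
The plan is to show that, against Alice's winning strategy, Bob can always make a legal move; once this is established the game necessarily has an outcome $\bigcap_{n\ge0}B_n$ (a single point, since the radii tend to $0$ and $X$ is complete), and by the definition of a $\gamma$-Cantor winning set this point lies in $S$, so $S\neq\emptyset$. Thus the whole task reduces to exhibiting a strategy for Bob that never gets stuck, using only that $\gamma<\alpha$ and that the underlying measure $\mu$ is $\alpha$-Ahlfors regular.

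First I would fix an exponent $\eta$ with $\tfrac{\alpha+\gamma}{2}<\eta<\alpha$ --- possible exactly because $\gamma<\alpha$ --- and have Bob open with a very small contraction ratio $\beta$ (how small, to be chosen at the end) and an arbitrary ball $B_0\subset X$. Inductively, given $B_n$, Ahlfors regularity produces a family $\mathcal C_{n+1}$ of at least $c_0\beta^{-\alpha}$ balls of radius $\beta^{n+1}r_0$, centred in $X$, contained in $B_n$, and so well separated that no obstacle ball of radius $\le\beta^{n+1}r_0$ meets two of them; here $c_0>0$ depends only on $\alpha$ and the regularity constant. Bob discards those members of $\mathcal C_{n+1}$ that intersect a currently active obstacle ball of radius $\beta^{n+1}r_0$, and among the survivors he picks one minimising the weighted obstruction count
$$P(C):=\sum_{A}\Bigl(\frac{\operatorname{rad}A}{\operatorname{rad}C}\Bigr)^{\eta},$$
the sum running over all obstacle balls $A$ revealed so far with $A\cap C\neq\emptyset$ and $\operatorname{rad}A\le\operatorname{rad}C$; this ball is $B_{n+1}$.

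The analysis then runs as one induction maintaining the bound $P(B_n)\le\tfrac{c_0}{2}\beta^{\alpha-\eta}$, and rests on two estimates. The \emph{budget estimate}: since $\mathcal A_{n+1,i}$ has at most $\beta^{-\gamma(i+1)}$ balls of radius $\beta^{n+1+i}r_0$, one round of newly revealed obstacles adds at most $\sum_{i\ge0}\beta^{-\gamma(i+1)}\beta^{(i+1)\eta}=\tfrac{\beta^{\eta-\gamma}}{1-\beta^{\eta-\gamma}}$ to any $P(\cdot)$ --- finite and small for small $\beta$, precisely because $\eta>\gamma$. The \emph{averaging estimate}: interchanging the order of summation gives $\sum_{C\in\mathcal C_{n+1}}P(C)\le\beta^{-\eta}P(B_n)$ (each obstacle of radius $\le\beta^{n+1}r_0$ meets at most one $C$ and is then counted in $P(B_n)$), and since the maintained bound leaves at least half of $\mathcal C_{n+1}$ un-discarded, the minimiser satisfies $P(B_{n+1})\le\tfrac{2}{c_0}\beta^{\alpha-\eta}P(B_n)$ --- a genuine contraction because $\eta<\alpha$. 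Combining the two yields $P(B_{n+1})\le\lambda(\beta)P(B_n)+\varepsilon(\beta)$ with $\lambda(\beta),\varepsilon(\beta)\to0$; the choice $\eta>\tfrac{\alpha+\gamma}{2}$ keeps $\varepsilon(\beta)$ below the threshold $(1-\lambda(\beta))\tfrac{c_0}{2}\beta^{\alpha-\eta}$, so the maintained bound survives each step (and holds at $n=0$ by the budget estimate applied to $\mathcal A_{1,\bullet}$). Finally, if Bob ever had no legal move at stage $n+1$, then all $\ge c_0\beta^{-\alpha}$ balls of $\mathcal C_{n+1}$ would be hit by distinct obstacle balls of radius $\beta^{n+1}r_0$ meeting $B_n$, each contributing $\beta^{\eta}$ to $P(B_n)$ and so forcing $P(B_n)\ge c_0\beta^{\eta-\alpha}$, which contradicts the maintained bound whenever $\beta<1$. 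Hence Bob never gets stuck, and the theorem follows.

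The hard part is not any individual computation --- the separated family, the interchange of sums, and the linear recursion are routine --- but calibrating the potential, and in particular the exponent $\eta$: it must contract under Bob's greedy step and make any hypothetical ``stuck'' configuration self-contradictory ($\eta<\alpha$), while its per-round increment stays finite and small enough relative to the maintained bound ($\eta>\tfrac{\alpha+\gamma}{2}$, which is satisfiable exactly because $\gamma<\alpha$). A secondary subtlety is that Alice reveals her obstacles of a given scale over several rounds rather than all at once, but this is harmless: by the time an obstacle of radius $\beta^{n+1}r_0$ first constrains Bob he has already seen it, and $P$ only ever undercounts.
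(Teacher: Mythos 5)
This statement is imported verbatim from [BNY21, Theorem 21]; the paper under review does not prove it, so there is no internal proof to measure you against. Your argument is correct and self-contained, but it takes a different (equally classical) route from the proof in [BNY21]. There, Bob's survival is established by growing an entire subtree: one maintains at each level $n$ a whole collection $\mathcal B_n$ of pairwise disjoint legal balls of radius $\beta^n r_0$ and shows by a strong-induction recursion that $\#\mathcal B_n$ stays positive (indeed grows geometrically), the key input being that the obstacles of radius $\beta^n r_0$ declared back at stage $n-i$ number at most $\#\mathcal B_{n-i-1}\cdot\beta^{-\gamma(i+1)}$, which is dominated by the $\asymp\beta^{-\alpha}\#\mathcal B_{n-1}$ available children because $\gamma<\alpha$. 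You instead follow a single branch greedily and compress the same future-threat bookkeeping into the potential $P(C)=\sum_A(\operatorname{rad}A/\operatorname{rad}C)^{\eta}$; your calibration $\gamma<\eta<\alpha$ with $\eta>\tfrac{\alpha+\gamma}{2}$ plays exactly the role of that domination. The subtree version buys more (a positive-dimensional Cantor set of outcomes, hence the thickness statements one usually wants downstream), while your version is leaner if bare nonemptiness is the goal.

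Two bookkeeping points to tighten, neither of which is a genuine gap. First, the budget estimate does not bound the increment ``to any $P(\cdot)$'': the weight of an obstacle grows as the reference ball shrinks, so the same batch of newly revealed obstacles can contribute as much as $\beta^{-\gamma}/(1-\beta^{\eta-\gamma})$ to $P(B_{n+1})$. The recursion must therefore be run in the order dictated by the game — absorb Alice's stage-$n$ revelations into $P(B_n)$ (cost $\varepsilon(\beta)\asymp\beta^{\eta-\gamma}$), and only then contract down to $B_{n+1}$ — which yields $P(B_{n+1})\le\lambda(\beta)\bigl(P(B_n)+\varepsilon(\beta)\bigr)$ and closes the induction as you indicate. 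Second, both the ``at least half of $\mathcal C_{n+1}$ survive'' count and the stuck analysis must be applied to this updated potential $P(B_n)+\varepsilon(\beta)\le\tfrac{c_0}{2}\beta^{\alpha-\eta}+\varepsilon(\beta)$, since the radius-$\beta^{n+1}r_0$ obstacles from $\mathcal A_{n+1,0}$ are revealed only at stage $n$; the inequalities still hold with room to spare for $\beta$ small, as you note.
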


We also recall the following transference lemma from \cite{Mahler1939}.
\begin{lemma}
\label{transfer}
    Let $T_0,\cdots, T_n>0$, $L_0,L_1,\cdots,L_n$ be a system of linear forms in variables $u_0,u_1,\cdots,u_n$ with real coefficients and determinants $d\neq 0$, and let $L_0',L_1',\cdots,L_n'$ be the transposed system of linear forms in variables $v_0,v_1,\cdots,v_n$, so that $\sum_{i=0}^nL_iL_i'=\sum_{i=0}^nu_iv_i$. Let $\iota^n=\frac{\prod_{i=0}^nT_i}{|d|}.$ Suppose there exists a non-zero integer point $(u_0,u_1,\cdots,u_n)$ such that
    \begin{equation}
        \label{system1}
        |L_i(u_0,u_1,\cdots,u_n)|\leq T_i,~~\forall~~ 0\leq i\leq n.
    \end{equation}
    Then there exists a non-zero integer point $(v_0,v_1,\cdots,v_n)$ such that
    \begin{equation}
        \label{system2}
        |L_0'(v_0,v_1,\cdots,v_n)|\leq n\iota/T_0\textit{ and }|L'_i(v_0,v_1,\cdots,v_n)|\leq \iota/T_i,~~\forall~~ 1\leq i\leq n.
    \end{equation}
\end{lemma}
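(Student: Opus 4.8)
The plan is to prove this as a statement in the geometry of numbers: it is Mahler's classical transference principle for convex bodies \cite{Mahler1939}. Write $L(x)=Ax$ with $A\in\GL_{n+1}(\R)$ and $\det A=d$. The defining relation $\sum_{i=0}^{n}L_iL_i'=\sum_{i=0}^{n}u_iv_i$ forces the transposed system to be $L'(y)=By$ with $B=(A^{-1})^{T}$, so $\det B=1/d$ and the lattices $A\Z^{n+1}$ and $B\Z^{n+1}$ are mutually dual. Hypothesis \eqref{system1} says exactly that the symmetric parallelepiped $K:=\{x\in\R^{n+1}:|L_i(x)|\le T_i,\ 0\le i\le n\}$, whose volume is $2^{n+1}\prod_iT_i/|d|=2^{n+1}\iota^{\,n}$, contains a nonzero point of $\Z^{n+1}$ (equivalently, its first Minkowski minimum is $\le 1$); conclusion \eqref{system2} says that the ``dual'' parallelepiped $K'$ cut out by $|L_0'|\le n\iota/T_0$ and $|L_i'|\le\iota/T_i$ ($1\le i\le n$), whose volume is $2^{n+1}n\iota$, contains a nonzero point of $\Z^{n+1}$. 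The identity $\sum_{i=0}^{n}L_i(u)L_i'(v)=\langle u,v\rangle\in\Z$ is the bridge between the primal and dual problems.

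I would split into two cases according to the size of $\iota$. If $n\iota\ge 1$, then $\operatorname{vol}(K')\ge 2^{n+1}$, so Minkowski's convex body theorem applied to $\Z^{n+1}$ and $K'$ already gives a nonzero point of $\Z^{n+1}$ in $K'$, which is \eqref{system2}; this case does not even use \eqref{system1}. If $n\iota<1$ one has to exploit the short vector $u$. After dividing $u$ by the greatest common divisor of its coordinates (which only decreases the left-hand sides of \eqref{system1}) one may assume $u$ primitive, and then the goal is to produce a nonzero $v$ in the rank-$n$ sublattice $\Z^{n+1}\cap u^{\perp}$ with $|L_i'(v)|\le\iota/T_i$ for $1\le i\le n$. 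For such a $v$ the orthogonality $\langle u,v\rangle=0$ together with the bridge identity yields $|L_0(u)|\,|L_0'(v)|=\big|\sum_{i\ge 1}L_i(u)L_i'(v)\big|\le\sum_{i\ge 1}T_i\cdot(\iota/T_i)=n\iota$, which is exactly the source of the extra factor $n$ at the index $0$ (the comparison of $|L_0(u)|$ with $T_0$ being arranged by a preliminary normalization of the $T_i$, or by retreating to the bound $|L_0'(v)|\le 1/T_0$ that is available when $n\iota\ge 1$).

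The real content, and the step I expect to be laborious, is the hard case $n\iota<1$ together with pinning the constant down to exactly $n$. The obstruction to a naive argument is that the covolume of $\Z^{n+1}\cap u^{\perp}$ equals the Euclidean length $|u|$, which is \emph{not} controlled by $|d|$ and the $T_i$, so one cannot simply apply Minkowski's theorem inside $u^{\perp}$. Mahler's device is to pass to $n$-th compound convex bodies and lattices: under the canonical identification $\bigwedge^{n}\R^{n+1}\cong\R^{n+1}$ the $n$-th compound of $\Z^{n+1}$ is again a unimodular lattice, the dual problem for $K$ becomes a direct Minkowski problem for the $n$-th compound of $K$ (a body of the same combinatorial shape as $K'$), and the single short vector $u$ compounds to an admissible point there. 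Equivalently, one may combine Mahler's transference inequalities $\lambda_j(\Z^{n+1},K)\,\lambda_{n+2-j}(\Z^{n+1},K^{\circ})\asymp_n 1$ with Minkowski's second theorem $\prod_{j}\lambda_j(\Z^{n+1},K)\asymp_n\operatorname{vol}(K)^{-1}$ to place a nonzero point of $\Z^{n+1}$ in a bounded dilate of the polar $K^{\circ}$ (a cross-polytope), and then compare $K^{\circ}$ with $K'$; this quicker route gives \eqref{system2} with some dimensional constant in place of $n$. Either way the only genuinely fiddly part is the volume bookkeeping for parallelepipeds and their polars and compounds that is needed to recover the sharp constant $n$, and this is precisely what is carried out in \cite{Mahler1939}.
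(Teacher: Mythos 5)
You have correctly identified the statement (it is Mahler's transference theorem; the paper itself gives no proof and simply recalls it from \cite{Mahler1939}, so there is no argument in the paper to compare against), and your setup is sound: the duality $B=(A^{-1})^{T}$, the volume counts, and the easy case $n\iota\ge 1$ via Minkowski are all correct. The problem is the main case $n\iota<1$, which is where the entire content of the lemma lies, and there your outline has a genuine gap. Granting that you can produce a nonzero $v\in\Z^{n+1}\cap u^{\perp}$ with $|L_i'(v)|\le\iota/T_i$ for $1\le i\le n$ (itself deferred to the compound-body machinery of \cite{Mahler1939}), the bridge identity only gives $|L_0(u)|\,|L_0'(v)|\le n\iota$, i.e.\ $|L_0'(v)|\le n\iota/|L_0(u)|$. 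Since the hypothesis \eqref{system1} gives $|L_0(u)|\le T_0$ and nothing in the other direction, this is weaker than the required bound $n\iota/T_0$, and it degenerates completely when $L_0(u)=0$ (which the hypothesis allows). Neither of your parenthetical fixes repairs this: replacing $T_0$ by $|L_0(u)|$ changes $\iota$ as well, and for $n\ge 2$ the quantity $\iota/T_0$ \emph{increases} as $T_0$ decreases, so the ``normalized'' statement does not imply \eqref{system2}; and the fallback bound $|L_0'(v)|\le 1/T_0$ is only available in the case $n\iota\ge1$, which is excluded here. So the step that produces the factor $n$ at index $0$ \emph{relative to $T_0$} is exactly the step you have not supplied, and it cannot be obtained from the orthogonality argument alone.

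Your alternative ``quicker route'' (Mahler's duality of successive minima plus Minkowski's second theorem) is correctly described but, as you note, it yields a nonzero $v$ with $|L_i'(v)|\le C_n\,\iota/T_i$ for \emph{all} $i$ with a dimensional constant $C_n$; that is not the statement of the lemma (the inflation appears at every index, not only at index $0$), although such a weaker version would in fact suffice for the paper's application in Corollary \ref{dual} and Proposition \ref{result} after adjusting the constant $k_1$. In summary: the framework is right, the trivial case is complete, but the sharp asymmetric conclusion \eqref{system2} in the case $n\iota<1$ is precisely the nontrivial part of Mahler's theorem, and your proposal either defers it to \cite{Mahler1939} or replaces it by an argument that provably cannot give the stated bound. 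To make the write-up honest, either carry out the compound (adjoint) construction in detail, or state that the lemma is quoted from \cite{Mahler1939} (it is also Theorem I of Chapter V in Cassels' \emph{An Introduction to Diophantine Approximation}), which is what the paper does.
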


Also, we recall \cite[Lemma 22]{BNY21},
\begin{lemma}
\label{cover}
    Let $\mu$ be an Ahlfors regular measure on $\R^d$, $A,\alpha,\rho_0$ be as in Definition \ref{ahlfors} and $S\subset\R^d$ be any subset. Suppose $0<r\leq r_0$ and $\mu(B(S,r))<\infty$. Then there exists a cover of $S\cap\supp\mu$ with balls of radius $3r$ of cardinality at most
    \begin{equation}
        \frac{A\mu(B(S,r))}{r^{\alpha}}.
    \end{equation}
\end{lemma}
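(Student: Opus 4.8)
\emph{Proof proposal.} The plan is a standard Vitali-type packing argument. First I would consider the family of all finite collections $\{B(x_1,r),\dots,B(x_N,r)\}$ of pairwise disjoint closed balls whose centers $x_i$ lie in $S\cap\supp\mu$. For any such collection, each ball $B(x_i,r)$ is contained in $B(S,r)$ because $x_i\in S$, so by disjointness $\sum_{i=1}^{N}\mu(B(x_i,r))=\mu\big(\bigcup_{i=1}^N B(x_i,r)\big)\leq\mu(B(S,r))$; on the other hand $x_i\in\supp\mu$ and $0<r\leq r_0$, so the lower Ahlfors bound in Definition \ref{ahlfors} gives $\mu(B(x_i,r))\geq A^{-1}r^{\alpha}$. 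Combining, $N A^{-1}r^{\alpha}\leq\mu(B(S,r))$, that is, $N\leq A\mu(B(S,r))/r^{\alpha}$. In particular the cardinalities of all such collections are bounded (here we use $\mu(B(S,r))<\infty$), so a collection of maximal cardinality exists; fix one and call it $\{B(x_i,r)\}_{i=1}^{N}$. If $S\cap\supp\mu=\emptyset$ the conclusion is vacuous, so we may assume this set is nonempty.

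Next I would verify that the enlarged balls $\{B(x_i,3r)\}_{i=1}^{N}$ cover $S\cap\supp\mu$. Let $y\in S\cap\supp\mu$. The ball $B(y,r)$ must intersect at least one of the $B(x_i,r)$: either $y$ is already one of the centers, or else $\{B(x_i,r)\}_{i=1}^N\cup\{B(y,r)\}$ would be a collection of pairwise disjoint closed balls with centers in $S\cap\supp\mu$ of cardinality $N+1$, contradicting maximality. Pick such an $i$; then there is a point within distance $r$ of both $y$ and $x_i$, so $d(y,x_i)\leq 2r$ and hence $y\in B(x_i,2r)\subseteq B(x_i,3r)$. Thus $\{B(x_i,3r)\}_{i=1}^N$ is a cover of $S\cap\supp\mu$ by balls of radius $3r$, and its cardinality is $N\leq A\mu(B(S,r))/r^{\alpha}$ by the estimate from the first paragraph. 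This proves the lemma.

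There is essentially no serious obstacle here. The only point requiring care is the bookkeeping on where the centers are chosen: they must be taken in $S\cap\supp\mu$, not merely in $S$ or in $\supp\mu$, so that simultaneously the disjoint balls $B(x_i,r)$ all lie inside $B(S,r)$ (which yields the measure bound) and the Ahlfors lower bound $\mu(B(x_i,r))\geq A^{-1}r^{\alpha}$ is applicable at each center. The hypothesis $\mu(B(S,r))<\infty$ serves only to make the maximal packing finite, and $0<r\leq r_0$ is used only to invoke the lower Ahlfors estimate.
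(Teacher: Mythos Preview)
Your proposal is correct and is precisely the standard Vitali/packing argument one expects here. Note that the paper does not actually give a proof of this lemma: it is merely \emph{recalled} from \cite[Lemma~22]{BNY21}, so there is no in-paper proof to compare against; your argument is the usual one (and in fact you observe that radius $2r$ already suffices, with $3r$ being a harmless weakening).
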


For any $g\in \mathrm{SL}_{d+1}(\R)$, $g\Z^{d+1}$ is a lattice in $\R^{d+1}$ with covolume $1$. Thus, by the equivalence map $g\mathrm{SL}_{d+1}(\Z)$ to $g\Z^{d+1}$, we associate $X_{d+1}$ with the space of all lattices with covolume $1$. For every $\varepsilon>0$, let us define 
$$
K_{\varepsilon}:=\{\Lambda\in X_{d+1}~|~ \inf_{\bx\in \Lambda\setminus 0}\Vert \bx\Vert_2\geq \varepsilon\}. $$ Mahler's compactness criterion ensures that these are compact sets in $X_{n+1}.$ Here and elsewhere $\Vert \cdot\Vert_2$ is the Euclidean norm.
\subsection{Preliminaries for Theorem \ref{measure-zero-theorem}}

First, let us recall the following lemma from \cite{BDGW_null}. 
Fix an integer $n \geq 1$, and for each $1 \leq i \leq n$ let $(X_{i}, d_{i}, \mu_{i})$ be a metric space equipped with a $\sigma$-finite Borel regular measure $\mu_{i}$. Let $(X,d,\mu)$ be the product space with $X=\prod_{i=1}^{n}X_{i}$, $\mu=\mu_{1}\times\cdots\times\mu_n$ being the product measure, and
\begin{equation*}
d(\bx^{(1)},\bx^{(2)})=\max_{1 \leq i \leq n}d_{i}(x_i^{(1)},x_i^{(2)})\,, \qquad \text{where }\bx^{(j)}=(x_1^{(j)},\dots,x_n^{(j)})\,\,\text{for }j=1,2.
\end{equation*}

\begin{lemma}[\cite{BDGW_null} Lemma 5.7] \label{CI_product} 
Let $n\in\N$. For each $1\le j\le n$ let $(X_j,d_{j},\mu_j)$ be a metric measure space equipped with a $\sigma$-finite doubling Borel regular measure $\mu_{j}$. Let $X=\prod_{j=1}^{n}X_{j}$ be the corresponding product space, $d=\max_{1 \leq j \leq n}d_{j}$ be the corresponding metric, and $\mu=\prod_{j=1}^{n}\mu_{j}$ be the corresponding product measure. Let $(S_i)_{i\in\N}$ be a sequence of subsets of $\supp \mu$ and $(\bm\delta_{i})_{i \in \N}$ be a sequence of positive $n$-tuples $\bm\delta_{i}=(\delta^{(1)}_{i},\dots,\delta^{(n)}_{i})$ such that $\delta^{(j)}_{i} \to 0$ as $i \to \infty$ for each $1 \leq j \leq n$. Let
\begin{equation*}
\Delta_n(S_{i}, \bm\delta_{i})=\{\bx \in X: \, \exists \, \ba \in S_{i} \, \, \, d_{j}(a_{j}, x_{j}) < \delta_{i}^{(j)} \, \, \forall \, \, 1 \leq j \leq n\}\,,
\end{equation*}
where $\bx=(x_1,\dots,x_n)$ and $\ba=(a_1,\dots,a_n)$.
Then, for any $\bm C=(C_{1},\dots , C_{n})$ and $\bm c=(c_{1},\dots , c_{n})$ with $0<c_j\le C_{j}$ for each $1 \leq j \leq n$
\begin{equation}\label{vb25}
\mu\left( \limsup_{i \to \infty} \Delta_n(S_{i}, \bm C\bm \delta_{i}) \;\setminus\;\limsup_{i \to \infty} \Delta_n(S_{i}, \bm c\bm\delta_{i}) \right)=0\,,
\end{equation}
where $\bm c\bm\delta_j=(c_1\delta^{(1)}_i,\dots,c_n\delta^{(n)}_i)$ and similarly $\bm C\bm\delta_j=(C_1\delta^{(1)}_i,\dots,C_n\delta^{(n)}_i)$.
\end{lemma}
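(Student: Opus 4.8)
The plan is to reduce the statement, via rescaling and a telescoping argument, to the single case in which exactly one coordinate of $\bm\delta_i$ is doubled while the others are left unchanged, and then to settle that case by freezing the remaining coordinates, invoking a one-dimensional Lebesgue--density lemma in the fibre, and reassembling with Tonelli's theorem. The point of getting down to a single coordinate is that it converts what would otherwise be an anisotropic differentiation problem --- which in general has no solution --- into an honest one-dimensional one.

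\emph{Reductions.} Replacing $\bm\delta_i$ by $\bm c\bm\delta_i$, which still tends to $\bm 0$ coordinatewise, I may assume $\bm c=(1,\dots,1)$, so every entry of $\bm C$ is $\geq 1$; fix $k\in\N$ with $2^k\geq C_j$ for all $j$. Since $\Delta_n(S_i,\cdot)$ is inclusion-monotone coordinatewise in its tuple argument and $\limsup$ preserves inclusions,
\begin{equation*}
\limsup_{i\to\infty}\Delta_n(S_i,\bm C\bm\delta_i)\setminus\limsup_{i\to\infty}\Delta_n(S_i,\bm\delta_i)\ \subseteq\ \limsup_{i\to\infty}\Delta_n(S_i,2^k\bm\delta_i)\setminus\limsup_{i\to\infty}\Delta_n(S_i,\bm\delta_i),
\end{equation*}
where $2^k\bm\delta_i$ denotes the tuple with all entries scaled by $2^k$. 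Insert a chain $\bm\delta_i=\bm\varepsilon^{(0)}_i\leq\cdots\leq\bm\varepsilon^{(N)}_i=2^k\bm\delta_i$ (coordinatewise), with $N=kn$, in which each $\bm\varepsilon^{(r)}_i$ is obtained from $\bm\varepsilon^{(r-1)}_i$ by doubling a single entry. If $\bx$ lies in $\limsup_i\Delta_n(S_i,\bm\varepsilon^{(N)}_i)\setminus\limsup_i\Delta_n(S_i,\bm\varepsilon^{(0)}_i)$ and $r$ is minimal with $\bx\in\limsup_i\Delta_n(S_i,\bm\varepsilon^{(r)}_i)$, then $1\leq r\leq N$ and $\bx$ lies in $\limsup_i\Delta_n(S_i,\bm\varepsilon^{(r)}_i)\setminus\limsup_i\Delta_n(S_i,\bm\varepsilon^{(r-1)}_i)$. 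Since each $\bm\varepsilon^{(r-1)}_i$ is again a positive $n$-tuple with entries tending to $0$, it is enough to prove the following for a generic such sequence, still denoted $\bm\delta_i$: for each fixed coordinate $j_0$, doubling the $j_0$-th entry of $\bm\delta_i$ changes $\limsup_i\Delta_n(S_i,\cdot)$ only by a $\mu$-null set.

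\emph{The one-dimensional input and the reduced case.} First I would establish the following. \emph{Let $(Y,\rho,\nu)$ be a metric space with $\nu$ a $\sigma$-finite doubling Borel regular measure, $(T_i)_i$ subsets of $Y$, $\eta_i\to 0^+$, $\lambda\geq 1$, and write $V(T,\eta)=\{y:\rho(y,T)<\eta\}$; then $\nu\big(\limsup_i V(T_i,\lambda\eta_i)\setminus\limsup_i V(T_i,\eta_i)\big)=0$.} To prove this, assume the exceptional set $E$ has $\nu(E)>0$, put $E_m=\{y\in E: y\notin V(T_k,\eta_k)\ \text{for all}\ k\geq m\}$, note these increase to $E$ (each point of $E$ lies in only finitely many $V(T_i,\eta_i)$), fix $m_0$ with $\nu(E_{m_0})>0$, and, using that the Lebesgue density theorem holds for doubling Borel measures, fix a density point $y_0$ of $E_{m_0}$. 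Since $y_0\in\limsup_i V(T_i,\lambda\eta_i)$, for infinitely many $i\geq m_0$ there is $a_i\in T_i$ with $\rho(a_i,y_0)<\lambda\eta_i$; for such $i$ the ball $\{y:\rho(y,a_i)<\eta_i\}$ lies inside $V(T_i,\eta_i)$, hence is disjoint from $E_{m_0}$ (as $i\geq m_0$), while it lies inside $\{y:\rho(y,y_0)<(\lambda+1)\eta_i\}$ and, by the doubling property, has $\nu$-measure at least $\kappa\,\nu\big(\{y:\rho(y,y_0)<(\lambda+1)\eta_i\}\big)$ with $\kappa=\kappa(\lambda,\nu)\in(0,1]$ fixed. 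So the $\nu$-density of $E_{m_0}$ in the ball of radius $(\lambda+1)\eta_i$ about $y_0$ is at most $1-\kappa$ for infinitely many $i$ with $(\lambda+1)\eta_i\to 0$, contradicting the choice of $y_0$. Now, for the reduced case, relabel so $j_0=1$, write $X=X_1\times X'$ with $X'=\prod_{j\geq 2}X_j$, $\mu'=\prod_{j\geq 2}\mu_j$, $\mu=\mu_1\times\mu'$, and for $\bx'\in X'$ set $T_i(\bx')=\{a_1\in X_1:\ (a_1,\ba')\in S_i\ \text{for some}\ \ba'\ \text{with}\ d_j(a'_j,x'_j)<\delta^{(j)}_i\ \text{for all}\ j\geq 2\}$. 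Since the entries $\delta^{(j)}_i$, $j\geq 2$, are untouched, the $x_1$-slice at $\bx'$ of $\Delta_n(S_i,\bm\delta_i)$, respectively of the tuple with $\delta^{(1)}_i$ doubled, equals $\{x_1:d_1(x_1,T_i(\bx'))<\delta^{(1)}_i\}$, respectively $\{x_1:d_1(x_1,T_i(\bx'))<2\delta^{(1)}_i\}$; and since slicing commutes with countable unions, intersections and $\limsup$, the $\bx'$-slice of the exceptional set is exactly the set furnished by the one-dimensional input with $Y=X_1$, $\rho=d_1$, $\nu=\mu_1$, $\eta_i=\delta^{(1)}_i$, $\lambda=2$, hence is $\mu_1$-null for every $\bx'$. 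Since the exceptional set is Borel --- a difference of two $G_\delta$ sets, each a $\limsup$ of open sets --- Tonelli's theorem shows it is $\mu$-null, completing the proof.

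\emph{The main obstacle.} The step I anticipate as the real difficulty is conceptual rather than computational: one is tempted to run the density argument directly in $X$, but the natural shrinking neighbourhoods there are the anisotropic boxes $\prod_j\{x_j:d_j(\cdot\,,x_j)<\delta^{(j)}_i\}$, whose aspect ratios $\delta^{(1)}_i:\cdots:\delta^{(n)}_i$ may oscillate arbitrarily with $i$, and differentiation of measures along such a family can genuinely fail (the classical unboundedness of the strong maximal operator). The telescoping reduction is precisely what removes the anisotropy: after it, only one coordinate is being enlarged while the rest are frozen into the base $X'$, so in each fibre $X_1$ the enlarged sets are honest metric balls, the one-dimensional lemma applies, and Tonelli reassembles the fibrewise vanishing. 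Everything else --- the monotonicity and telescoping, the $G_\delta$-measurability of a $\limsup$ of open sets, and the doubling volume comparisons --- is routine.
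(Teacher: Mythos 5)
Your argument is correct in substance, but note that this paper does not prove the lemma at all: it is imported verbatim from [BDGW\_null, Lemma 5.7], so the only comparison available is with that source, whose proof likewise rests on a Lebesgue--density argument for doubling measures. Your route --- rescale so $\bm c=(1,\dots,1)$, telescope from $\bm\delta_i$ to $2^k\bm\delta_i$ by doubling one coordinate at a time, prove the one-coordinate statement fibrewise via density points, and reassemble with Tonelli --- is a clean and legitimate way to organise this, and your diagnosis of why one cannot differentiate directly along the anisotropic boxes (oscillating aspect ratios, failure of the strong maximal inequality in this generality) is exactly the right reason for the reduction. One point needs to be made explicit, because as stated your one-dimensional input is false in the generality you claim: for arbitrary $T_i\subset Y$ and a measure that is doubling only at centres in its support, the comparison $\nu\bigl(B(a_i,\eta_i)\bigr)\geq\kappa\,\nu\bigl(B(y_0,(\lambda+1)\eta_i)\bigr)$ can fail (e.g.\ $\nu$ the length measure on a line in the plane and $T_i$ a $\tfrac{3}{2}\eta_i$-dense set placed at distance $\tfrac{3}{2}\eta_i$ from that line makes the enlarged limsup full and the small one empty). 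The fix is already implicit in your setup: since $S_i\subset\supp\mu$ and $\supp\mu=\prod_j\supp\mu_j$, the fibre sets $T_i(\bx')$ lie in $\supp\mu_1$, so the doubling bound at the centres $a_i$ is available; you should either add the hypothesis $T_i\subset\supp\nu$ to the one-dimensional lemma or record this observation when applying it. With that amendment (and the routine remark that separability of the supports of doubling measures takes care of product-measurability of the exceptional set before Tonelli), the proof is complete.
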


\section{Hyperplane absolute winning: Theorem \ref{thm: main}}\label{Sec3}
To prove Theorem \ref{thm: main}, it suffices to show the following theorem, by \cite[Section 2, Proposition 11]{BNY21}.
\begin{theorem}
\label{thm: twin main}
Let $\ta, \mathbf{w}$ be as in Theorem \ref{thm: main}. Suppose $\mu$ be a compactly supported $\alpha$-Ahlfors regular absolutely decaying measure on $\R^d$, then 
\begin{equation}
    \mathbf{Bad}_{\ta}(\mathbf{w})\bigcap\supp\mu\neq\emptyset.
\end{equation}
\end{theorem}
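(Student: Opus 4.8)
\textbf{Proof proposal for Theorem \ref{thm: twin main}.}

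The plan is to show that $\mathbf{Bad}_{\ta}(\mathbf{w})\cap\supp\mu$ is Cantor winning in the sense of Definition \ref{cantor game}; nonemptiness then follows from Theorem \ref{nonempty}. Following \cite{BNY21}, I would set up Bob's ball $B_0$ inside $\supp\mu$, fix a small base parameter $\beta$, and at stage $n$ describe the finitely many obstructions that Alice must delete so that the eventual limit point $\bx$ satisfies $|qx_i-\theta_i(x_i)|_{\Z}^{1/w_i}|q|>c$ for a suitable constant $c$ and all large $q$. As in the homogeneous case, for each dyadic range of $|q|$ (say $|q|\asymp 2^{k}$) the set of $\bx$ for which some $q$ in that range violates the badness condition is essentially a union of neighborhoods of affine hyperplanes — in the homogeneous case these are the hyperplanes $\{qx_i-p_i=0\}$, and here they become $\{qx_i-\theta_i(x_i)-p_i=0\}$. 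The core new difficulty is that $\theta_i$ is a (Lipschitz, possibly nonconstant) function, so these ``bad loci'' are no longer exactly hyperplanes. My idea, exactly as flagged in the introduction, is to partition $B_n$ at each stage into a controlled number of sub-rectangles of side comparable to $\beta^{n}r_0$ in each coordinate, and on each such rectangle to replace $\theta_i(x_i)$ by its value at a fixed reference point; the Lipschitz bound $|\theta_i(x)-\theta_i(y)|\le\mathfrak{m}|x-y|$ guarantees the error incurred is $O(\mathfrak m\,\beta^{n}r_0)$, which is absorbed into the width of the hyperplane neighborhood we are deleting, at the cost of enlarging the implied constants by a fixed factor. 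Thus locally the obstruction genuinely is a neighborhood of an affine hyperplane, and the weighted structure of $\mathbf{w}$ enters exactly as in \cite{BNY21} through anisotropic scaling of the coordinates.

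Next I would count the obstructions and check they fit the budget $\beta^{-\gamma(i+1)}$ balls of radius $\beta^{n+1+i}r_0$ allowed to Alice at stage $n+1$. The mechanism is the one from \cite{BNY21}: translate the ``$\exists q$ with $|q|\asymp 2^k$ violating badness near $\bx$'' statement into a statement about a lattice $g_{\bx,k}\Z^{d+1}$ (built from $\bx$, the weight $\bw$, and the inhomogeneous shift $(\theta_i(x_i))$) leaving the compact set $K_\varepsilon$, apply the quantitative nondivergence / counting estimate of \cite{KLW} together with the absolutely decaying hypothesis on $\mu$ (Definition \ref{def: absolutely decaying}) to bound how much $\mu$-mass sits near each bad hyperplane, and then invoke Lemma \ref{cover} to convert that mass bound into a bound on the number of radius-$3r$ balls needed to cover the bad set. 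The absolutely decaying property is used twice: once, as in \cite{BNY21}, to control $\mu$ near the homogeneous hyperplanes; and once more — this is the genuinely new use — to ensure that after the rectangle subdivision the inhomogeneous shift does not create an obstruction of uncontrollably large measure, so that the deletions can be performed ``legally'' within Alice's allowance. Summing the geometric series over $k$ and over the delay index $i$ (the obstruction coming from $|q|\asymp 2^k$ is handled at the stage where $\beta^{n+1+i}r_0$ first becomes smaller than the natural width $\sim 2^{-k}$ of that obstruction) yields a $\gamma<\alpha$ for which Alice wins, provided $c$ is chosen small enough and $\beta$ close enough to the relevant threshold.

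Finally, I would assemble these pieces: fix $\beta$, fix $c=c(\beta,\alpha,d,A,D,\delta,\mathfrak m)$, let Alice at each stage delete (i) the homogeneous obstructions exactly as in \cite[proof of Theorem 3]{BNY21} and (ii) the inhomogeneous correction terms on each sub-rectangle, verify the cardinality bounds stage by stage, and conclude that any surviving Bob-point lies in $\mathbf{Bad}_{\ta}(\mathbf{w})\cap\supp\mu$, so the set is Cantor winning and hence nonempty by Theorem \ref{nonempty}. I expect the main obstacle to be step (ii): making the ``treat $\theta_i$ as locally constant'' heuristic rigorous uniformly across all stages — in particular, checking that the number of sub-rectangles needed, multiplied by the per-rectangle count of inhomogeneous obstructions, still fits inside Alice's Cantor-game budget for a value of $\gamma$ strictly below $\alpha$, and that the errors from the Lipschitz approximation never force the deleted neighborhoods to be wider than the scale of the current stage. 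The higher-dimensional weighted bookkeeping (aligning the anisotropic rates $1/w_i$ with the isotropic balls of the Cantor game, as already done in \cite{BNY21}) is delicate but not new; the new content is entirely in controlling the inhomogeneous part through the rectangle subdivision together with the absolutely decaying estimate.
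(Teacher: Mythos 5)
Your overall framework matches the paper's: reduce to Cantor winning via Theorem \ref{nonempty}, keep the homogeneous deletions of \cite{BNY21} intact, subdivide into rectangles on which $\theta_i$ is treated as constant (using the Lipschitz bound), and use the absolutely decaying property together with Lemma \ref{cover} to convert a measure bound near a hyperplane into a ball count. However, there is a genuine gap at the central counting step. You propose to bound the number of inhomogeneous obstructions per stage by translating them into a lattice leaving $K_\varepsilon$ and applying the quantitative nondivergence estimate of \cite{KLW}. That machinery controls unimodular lattices $u_{\bx}\Z^{d+1}$; the inhomogeneous condition $|qx_i-\theta_i(x_i)-p_i|$ concerns an affine grid, not a lattice, and nondivergence does not directly bound how many shifted rational points $(p_i+\theta_i)/m$ produce dangerous neighborhoods meeting $B_n$. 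Without such a bound, the number of hyperplane neighborhoods you must delete at a given stage is a priori unbounded and cannot fit Alice's budget $\beta^{-\gamma(i+1)}$.

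The paper closes this gap with a uniqueness statement (Proposition \ref{final lemma-part1}): for each level $n$, \emph{at most one} rational $\bv=\bp/m\in\mathcal{V}_n$ has its dangerous domain meeting $B_n$ after the homogeneous obstructions are removed. The proof is a differencing trick: if two approximants $(m_1,\bp_1)$ and $(m_2,\bp_2)$ both work, then in $(m_1-m_2)x_i-(p_{i,1}-p_{i,2})$ the inhomogeneous shift cancels (up to the Lipschitz error $|\theta_i(x_{i,1})-\theta_i(x_{i,2})|$, controlled by $\diam B_n$), producing a small \emph{homogeneous} solution; Mahler's transference (Lemma \ref{transfer}) and the fact that surviving points lie in $D_n$ (where $a_yu_{\bx}\Z^{d+1}$ stays in a fixed compact set, Lemma \ref{bounded}) rule this out. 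Thus the homogeneous deletions you already perform are precisely what makes the inhomogeneous count equal to one (times at most $2^d$ rectangles, Lemma \ref{lemma: unique j}), and only a single fixed delay index $i_0$ is needed for all inhomogeneous deletions. This cancellation-plus-transference argument is the essential new idea your proposal is missing; also note that your rectangles of side $\beta^nr_0$ coincide with the scale of $B_n$ itself, whereas the paper must subdivide much more finely (side $\asymp\mathfrak{q}cr_0\beta^{(n+2)w_i/(1+w_1)}/\mathfrak{m}$ in direction $i$) so that the locally constant approximation error is dominated by the width $\mathfrak{q}c/m^{1+w_i}$ of the dangerous domain rather than merely by the stage scale.
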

\subsection{Preparation for the proof of Theorem \ref{thm: main} }

In the view of Theorem \ref{nonempty}  to prove Theorem \ref{thm: twin main} (which is equivalent to Theorem \ref{thm: main}) it suffices to show that for any $\alpha$-Ahlfors regular absolutely decaying measure $\mu,$ $\Bad_{\ta}(\bw)\cap \supp\mu$ is $\alpha'$-Cantor winning for some $\alpha'<\alpha$. Thus, from now on we are playing a Cantor game as described on Definition \ref{cantor game}. Assume Bob chooses $\beta$ and ball $B_0\subset \supp\mu$ of radius $r_0$. In particular, $B_0,\cdots, B_n,n\in\mathbb N$  balls and $0<\beta<1$ are chosen by the player Bob as in Definition \ref{cantor game}. Note that we can assume without loss of generality that $\beta$ can be chosen very small (will be determined later) as Alice's strategy can be only genuine at steps multiples of some $M$ for some $M\geq 1,$ otherwise, Alice plays arbitrarily. Also, without loss of generality, we assume that $r_0<1.$

Let us define $b$ such that
\begin{equation}
\label{beta}
    0<b^{-(1+w_1)}:=\beta<1.
\end{equation}
Without loss of generality, for the proof of Theorem \ref{thm: main} we assume
\begin{equation}\label{order in w}
    w_1\geq w_2\geq\cdots\geq w_d>0,
\end{equation}
and denote $t\in\N$ such that
\begin{equation}\label{defn: t}
    w_1=\cdots=w_t>w_{t+1}.
\end{equation}
Formally define $$w_{d+1}:=0.$$
For $y\in\R$ and $\bx\in \R^d$, let us define,
\begin{equation}
\begin{aligned}
    &a_y:=\textbf{diag}\{b^y,b^{-w_1y},\cdots,b^{-w_dy}\},\\&d_y:=\textbf{diag}\{\beta^{\frac{ty}{d+1}},\underbrace{\beta^{-\frac{(d+1-t)y}{d+1}},\cdots,\beta^{-\frac{(d+1-t)y}{d+1}}}_{t\text{ many}},\underbrace{\beta^{\frac{ty}{d+1}},\cdots,\beta^{\frac{ty}{d+1}}}_{d-t \text{ many }}\},\\&u_\mathbf{x}:=\begin{pmatrix}
        1&\mathbf{x}^T\\
        0&\mathrm{I}_d
    \end{pmatrix},  \text{ where } \mathrm{I}_d \text{ is the identity matrix of size  } d\times d.
    \end{aligned}
\end{equation}

Let us recall the following definitions
\begin{equation}
    \label{def: s}
    s:=\max\left\{5,\left\lceil\frac{1+w_1}{w_1-w_{t+1}}\right\rceil,\left\lceil\frac{2(1+w_1)+1}{w_1}\right\rceil\right\},
\end{equation}

\begin{equation}
    \label{def: eta}
    \eta:=\min\left\{\frac{1}{4(d+1)},\frac{w_ds}{2d(1+w_1)},\frac{w_1s-2(1+w_1)}{2d(1+w_1)}\right\},
\end{equation}
as in \cite[Equation (63),(64)]{BNY21}. Note that we can get rid of $\frac{\alpha}{\gamma}$ in \cite[Equation(64)]{BNY21}, which is the definition of $\eta$, because we can asssume $\gamma$ in \cite[Theorem 23]{BNY21} sufficiently small such that $\frac{\alpha}{\gamma}>\eta$ as in \eqref{def: eta}.

\subsection{Recalling the game in \cite{BNY21}}When $\ta=\mathbf{0}$, the winning result was shown in \cite{BNY21}. We first recall the strategy used by Alice in \cite[p. 20-21]{BNY21}.
For $i\geq 0$, let
\begin{equation}
    A_{1,i}:=\bigcup_{n'\geq 0,l\geq\frac{n'+1}{s},n'+(s-1)l=i}\{\bx\in B_0:d_la_{n'+1+sl}u_{\bx}\Z^{d+1}\notin K_{\beta^{\eta l}}\},
\end{equation} where the union is taken over all possible $n', l\in\N$
and let $A_{1,i}=\emptyset$ if no such $n',l$ exists.

Next, for $n\geq 1$, following \cite[Equation (69)]{BNY21} let
\begin{equation}\label{def: original target set}
    A_{n+1,i}:=\{\bx\in B_n:d_la_{n+1+sl} u_{\bx}\Z^{d+1}\notin K_{\beta^{\eta l}}\},
\end{equation}
if $i=(s-1)l$ for some integer $0<l<\frac{n+1}{s}$; otherwise we define $A_{n+1,i}:=\emptyset$.
Then by \cite[Equation (70)]{BNY21} define
\begin{equation}
\Tilde{A}_{n+1,i}:=A_{n+1,i}\setminus\bigcup_{0\leq n'<n,i'\geq0}A_{n'+1,i'}.
\end{equation}
%Finally, %by \cite[Equations (66),(67),(74)]{BNY21},  \sd{Rewrite the following Lemma starting with this line}

%Finally, we recall that we get  \cite[Equation (74)]{BNY21} using \cite[Lemma 27]{BNY21} from the construction of $\tilde{A}_{n+1,i}$ in \cite[Equation 70]{BNY21}. For this crucial observation, see \cite[p. 23-24]{BNY21}. To summarize: there exist $\beta_0>0$ (determined by \cite[Equations (66),(67)]{BNY21}) and $0\leq\alpha'<\alpha$ (determined by \cite[Equation (65)]{BNY21}, such that if $B_n\nsubseteq\cup_{0\leq n'<n,i'\geq0} A_{n'+1,i'}$ holds for every $n\in\mathbb N$, then for any $0<\beta\leq\beta_0, n\in\N, i\geq 0$, there exists  a cover $\mathcal{A}_{n+1,i}$ for $\Tilde{A}_{n+1,i}$, consisting of balls of radius $\beta^{n+1+i}r_0$ of cardinality at most $\beta^{-\alpha'(i+1)}$. Moreover, we know by the construction of $\tilde{A}_{n+1,i}$, $$\bigcup_{0\leq n'<n, i\geq 0} {A}_{n'+1,i}=\bigcup_{0\leq n'<n, i\geq 0} \tilde{A}_{n'+1,i}.$$ Combining this discussion, we have the following lemma. 
%\begin{lemma}
%\label{borrowed}
%There exist $\beta_0>0$ and $0\leq\alpha'<\alpha$, such that if \begin{equation}\label{condition}
%B_n\cap\cup_{0\leq n'<n,i'\geq0} \tilde A_{n'+1,i'}=\emptyset
%\end{equation} holds for every $n\in\mathbb N$, then for any $0<\beta\leq\beta_0, n\in \N, i\geq 0$, there exists a cover $\mathcal{A}_{n+1,i}$ for $\Tilde{A}_{n+1,i}$, consisting of balls of radius $\beta^{n+1+i}r_0$ and $$\#\mathcal{A}_{n+1,i}\leq \beta^{-\alpha'(i+1)}.$$ %\sd{ \text{ should be } \beta_0}
%\end{lemma}

Finally, we recall that we get  \cite[Equation (74)]{BNY21} using \cite[Lemma 27]{BNY21} from the construction of $\tilde{A}_{n+1,i}$ in \cite[Equation 70]{BNY21}. For this crucial observation, see \cite[p. 23-24]{BNY21}. To summarize: there exist $\beta_0>0$ (determined by \cite[Equations (66),(67)]{BNY21}) and $0\leq\alpha'<\alpha$ (determined by \cite[Equation (65)]{BNY21}, such that any $0<\beta\leq\beta_0, n\in\N, i\geq 0$, there exists  a cover $\mathcal{A}_{n+1,i}$ for $\Tilde{A}_{n+1,i}$, consisting of balls of radius $\beta^{n+1+i}r_0$ of cardinality at most $\beta^{-\alpha'(i+1)}$. Combining the above facts from \cite{BNY21}, we have the following lemma. 
\begin{lemma}
\label{borrowed}
There exist $\beta_0>0$ and $0\leq\alpha'<\alpha$, such that for any $0<\beta\leq\beta_0, n\in \N, i\geq 0$, there exists a cover $\mathcal{A}_{n+1,i}$ for $\Tilde{A}_{n+1,i}$, consisting of balls of radius $\beta^{n+1+i}r_0$ and $$\#\mathcal{A}_{n+1,i}\leq \beta^{-\alpha'(i+1)}.$$ %\sd{ \text{ should be } \beta_0}
\end{lemma}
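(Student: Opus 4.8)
The strategy is to observe that the assertion is, after repackaging, exactly \cite[Equation (74)]{BNY21} together with the choices of parameters made there. Every ingredient entering the definition of $\tilde{A}_{n+1,i}$ --- the matrices $a_y,d_y,u_{\bx}$, the compact sets $K_\varepsilon\subset X_{d+1}$, and the hypotheses that $\mu$ be compactly supported, $\alpha$-Ahlfors regular and absolutely decaying --- coincides with the data of the homogeneous argument in \cite{BNY21}, and none of it involves $\ta$; so the estimates there apply verbatim and the proof amounts to reassembling them. I would proceed as follows.

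First, I would invoke the quantitative nondivergence input: \cite[Theorem 23]{BNY21}, which builds on \cite{KLW} and uses the Ahlfors-regular and absolutely decaying properties of $\mu$, controls, for $\gamma$ small, the $\mu$-mass of the set of $\bx\in B_n$ with $d_l a_{n+1+sl}u_{\bx}\Z^{d+1}\notin K_{\beta^{\eta l}}$. Combining this with the covering lemma (Lemma~\ref{cover}) and the bookkeeping of \cite[Lemma 27]{BNY21} --- in which the subtraction $\tilde{A}_{n+1,i}=A_{n+1,i}\setminus\bigcup_{0\le n'<n,\,i'\ge0}A_{n'+1,i'}$ is precisely what makes the relevant mass small --- yields, with $s,\eta$ as in \eqref{def: s} and \eqref{def: eta} and with $i=(s-1)l$, a cover of $\tilde{A}_{n+1,i}$ by balls of radius $\beta^{n+1+i}r_0$ of cardinality at most $\beta^{-\gamma(i+1)}$ once $\beta$ is small enough; this is \cite[Equation (74)]{BNY21}. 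The set $A_{1,i}$ is handled identically, and is in fact easier since no previously chosen Bob ball enters.

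Second, I would fix the parameters: take $\gamma$ small enough that $\alpha/\gamma>\eta$ (which, as noted after \eqref{def: eta}, is what allows the $\alpha/\gamma$ factor to be dropped from \cite[Eq. (64)]{BNY21}), set $\alpha'$ as in \cite[Equation (65)]{BNY21} --- so that $0\le\alpha'<\alpha$ --- and $\beta_0$ as in \cite[Equations (66),(67)]{BNY21}. With these choices the bound above reads $\#\mathcal{A}_{n+1,i}\le\beta^{-\alpha'(i+1)}$ for all $0<\beta\le\beta_0$, $n\in\N$ and $i\ge0$, which is the claim.

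I do not anticipate a genuine obstacle here: the only point requiring care is to confirm that each cited statement of \cite{BNY21} was proved for an arbitrary compactly supported $\alpha$-Ahlfors regular absolutely decaying measure and for the abstract lattice data recalled above, with no hidden use of $\ta=\mathbf 0$ --- which is the case. The purpose of this lemma is merely to isolate this homogeneous input, so that in the next subsection it can be combined with the new strategy that legally removes the obstructions to a point lying in $\Bad_{\ta}(\bw)$.
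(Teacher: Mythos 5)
Your proposal is correct and follows essentially the same route as the paper: Lemma \ref{borrowed} is precisely the "borrowed" homogeneous input, obtained by citing \cite[Theorem 23, Lemma 27, Equations (63)--(67) and (74)]{BNY21} for an arbitrary compactly supported Ahlfors regular absolutely decaying measure, with the same observation that none of this data involves $\ta$. The paper likewise does no more than reassemble these citations with $\alpha'$ from \cite[Equation (65)]{BNY21} and $\beta_0$ from \cite[Equations (66),(67)]{BNY21}.
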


\subsection{The new strategy}
Let $0<\beta\leq \beta_0$, where $\beta_0$ is as in Lemma \ref{borrowed}. Furthermore, suppose \begin{equation}\label{R_0'} \beta<\min\left\{r_0, \frac{1}{1+\mathfrak{m}}\right\}.\end{equation} From now on, such an $\beta$ is fixed. Let us recall that $\alpha, A, D$ are parameters of the Ahlfors regular absolutely decaying measure $\mu$.

%\sd{ Bob starts the game by choosing $\beta$ which is fixed throughout the game. So how is that we can assune $\beta<\beta_0?$ It is without loss of generality as Alice's strategy can be described with $\beta^{M}, M<1$ and letting her play arbitrarily on every step which is not $1$ modulo $M.$ I added it in the beginning of previous section.}\ls{You are right!}
Let $i_0\in\N$ be the smallest integer such that 
\begin{equation}\label{def: i_0}\beta^{(i_0+1)/2}\leq (A^2D6^{2\delta+\alpha})^{-1/\delta}, \text{ and } s-1\nmid i_0\end{equation} where $A,\alpha$ are as in Definition \ref{ahlfors}, $D,\delta$ are as in Definition \ref{def: absolutely decaying} and $s$ be as in \eqref{def: s}. We can see from these Definitions \ref{ahlfors} and \ref{def: absolutely decaying} that we can assume $A,D>1.$ Thus the right hand side of the first inequality above is less than $1.$
%\sd{What is $\delta?$}

Let $c<\beta^3$ be a sufficiently small constant which will be determined in \eqref{def: c}. Then for $n\in\N$, we define, 
\begin{equation}
\label{def: V_n}
    \mathcal{V}_n:=\left\{\bv=\frac{\mathbf{p}}{m}\in\Q^d, cr_0^{-1}\beta^{-(n+1)}\leq m^{1+w_1}<cr_0^{-1}\beta^{-(n+2)}, m\geq 3c^{-1}\beta^{-1}r_0^{-1}\right\}.
\end{equation}
By the choice of $c$ small enough we will have in \eqref{empty} that $\mathcal{V}_0=\emptyset.$
%\sd{$$\mathcal{V}_{n}:=\{(\bq,p)\in\Z^{n+1}~|~ \beta_1^{-(n+1)}\ll_{i} \max_{i=1}^d \vert q_i\vert^{1/w_i}\ll_{i}\beta_1^{-(n+2)}\}$$, for some $0<\beta_1<1$. Also, denote 
%$$\mathcal{V}_{n,k}:=\mathcal{V}_{n}\cap\{\beta^{-(n+1)w_{k}}\ll \Vert \bq\Vert \ll \beta^{-(n+1)w_{k-1}}\}, k=1,\cdots, d+1$$}
%Thus by \eqref{R_0'} and $c<\beta^3$  \Q^d\setminus \text{fineitly many}=
Then we have
\begin{equation}
    \Q_0^d:==\bigcup_{\ba\in\Z^d}(\Q^d\cap\prod_{i=1}^d[a_i, a_i+1]\setminus \text{finitely many})=\bigcup_{n\geq 1}\mathcal{V}_n
    \label{eqn: union}
\end{equation}

%\ls{Here, do we mean $\Q_0^d:=\bigcup_{\ba\in\Z^d}(\Q^d\cap\prod_{i=1}^d[a_i, a_i+1]\setminus \text{finitely many})=\bigcup_{n\geq 1}\mathcal{V}_n$?} \sd{yes you are right. Fixed}

 Let $\tilde B_0$ be the smallest cube in $\R^d$ that contains $B_0$. Given $n\in \N$, we divide $\tilde B_0$ as a disjoint union of rectangles of side lengths $\frac{r_0 c \mathfrak{q}}{\mathfrak{m}\beta^{-\frac{(n+2)}{(1+w_1)}w_i}}$ in the $i$-th direction where $1\leq i\leq d$ where $\mathfrak{q}$ is defined in \eqref{def: q}. Let us denote the collection of such rectangles intersected with the ball $B_0$ to be $\{B_0^{n,j}\}_{j}$. There atmost $$n^\star:=\frac{2^d \mathfrak{m}^d\beta^{-\frac{(n+2)}{(1+w_1)}}}{c^d \mathfrak{q}^d}$$ many such rectangles.  Let us fix points in $B_0^{j,n}$ to be $\mathbf{b}^{j,n}=(b_1^{j,n},\cdots, b_d^{j,n})$ and denote $\theta_i(b_i^{j,n})=\theta_i^{j,n},$ and $\ta^{j,n}=(\theta_i^{j,n}).$ Note that 
$B_0=\bigcup_{j=1}^{n^\star} B_0^{j,n}.$

Define
\begin{equation}
\label{def: q}
    \mathfrak{q}:=\beta^{(i_0+1)/2}(A^2D2^{2\delta+\alpha+d}3^{\alpha})^{-1/\delta} .
\end{equation}

In particular,
\begin{equation}
    \mathfrak{q}<1,
    \label{q<1}
\end{equation}
since $A, D>1.$
For $\bv=\frac{\mathbf{p}}{m}\in\mathcal{V}_n$ and 

\begin{equation}
\label{def: inho_interval}\begin{aligned}
 &\Delta_{\ta}(\bv):=\left\{\bx\in B_0:\left|x_i-\frac{p_i+\theta_i(x_i)}{m}\right|<\frac{\mathfrak{q}c}{m^{1+w_i}},\forall~ 1\leq i\leq d\right\},\\
    &\Delta_{\ta}^{j,n}(\bv):=B^{j,n}_0\cap \Delta_{\ta}(\bv) \text{ for } 1\leq j\leq n^\star.
\end{aligned}\end{equation}
Also for $\bv=\frac{\mathbf{p}}{m}\in\mathcal{V}_n$ and $1\leq j\leq n^\star$, let us define 
\begin{equation}
\label{def: inho_interval_new}
    \tilde \Delta_{\ta}^{j,n}(\bv):=\left\{\bx\in B^{j,n}_0:\left|x_i-\frac{p_i+\theta_i^{j,n}}{m}\right|<\frac{\mathfrak{q}c}{m^{1+w_i}},\forall ~ 1\leq i\leq d\right\}.
\end{equation}
\begin{lemma}
    For every $\bv=\frac{\mathbf{p}}{m}\in\mathcal{V}_n$, $1\leq j\leq n^\star$, 
    \begin{equation}
        \tilde \Delta_{\ta}^{j,n}(\bv)\subset  2\Delta_{\ta}^{j,n}(\bv)\subset  4\tilde \Delta_{\ta}^{j,n}(\bv),
    \end{equation}where for  $k>0, k\Delta^{j,n}_\ta(\bv):=B_0^{j,n}\cap k\Delta_\ta(\bv)$ is defined as in Equation \eqref{def: inho_interval} except that $\mathfrak q$ is replaced by $k\mathfrak q$.
    \label{compare dangerous domain}
\end{lemma}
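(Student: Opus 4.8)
The plan is to unwind the three nested sets and reduce everything to an elementary comparison on the center $\bb^{j,n}$ of the rectangle $B_0^{j,n}$. The point of the lemma is that replacing the genuine inhomogeneous shift $\theta_i(x_i)$ by the frozen value $\theta_i^{j,n}=\theta_i(b_i^{j,n})$ only perturbs the defining inequalities by a controlled amount, because inside $B_0^{j,n}$ the Lipschitz oscillation of $\theta_i$ is small compared with the widths $\mathfrak{q}c/m^{1+w_i}$ that appear in \eqref{def: inho_interval} and \eqref{def: inho_interval_new}. The key quantitative input is the choice of the side lengths of $B_0^{j,n}$: in the $i$-th direction the rectangle has width $\tfrac{r_0 c\mathfrak{q}}{\mathfrak{m}\beta^{-(n+2)w_i/(1+w_1)}}$, so for $\bx\in B_0^{j,n}$ one gets $|x_i-b_i^{j,n}|\le \tfrac{r_0 c\mathfrak{q}}{\mathfrak{m}}\beta^{(n+2)w_i/(1+w_1)}$ and hence, by the Lipschitz bound on $\theta_i$,
\begin{equation*}
  \frac{|\theta_i(x_i)-\theta_i^{j,n}|}{m}\le \frac{\mathfrak{m}|x_i-b_i^{j,n}|}{m}\le \frac{r_0 c\mathfrak{q}}{m}\,\beta^{(n+2)w_i/(1+w_1)}.
\end{equation*}
On the other hand, for $\bv=\bp/m\in\mathcal{V}_n$ we have $m^{1+w_1}<cr_0^{-1}\beta^{-(n+2)}$, so $m^{-1}\le m^{w_i}\,m^{-1-w_i}\le (cr_0^{-1})^{w_i/(1+w_1)}\beta^{-(n+2)w_i/(1+w_1)}\,m^{-1-w_i}$, which combined with the previous display gives
\begin{equation*}
  \frac{|\theta_i(x_i)-\theta_i^{j,n}|}{m}\le (c r_0^{-1})^{w_i/(1+w_1)}\,r_0 c\,\frac{\mathfrak{q}}{m^{1+w_i}}\le \frac{\mathfrak{q}c}{m^{1+w_i}},
\end{equation*}
using $c<\beta^3<1$ and $r_0<1$ to absorb the constant (here one uses $w_i/(1+w_1)\le 1$ and $r_0^{1-w_i/(1+w_1)}\le 1$). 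Thus the "error" term coming from freezing $\theta_i$ is bounded by one full width $\mathfrak{q}c/m^{1+w_i}$.

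With this estimate in hand the two inclusions are routine triangle-inequality arguments. For $\tilde\Delta_\ta^{j,n}(\bv)\subset 2\Delta_\ta^{j,n}(\bv)$: if $\bx\in B_0^{j,n}$ satisfies $|x_i-(p_i+\theta_i^{j,n})/m|<\mathfrak{q}c/m^{1+w_i}$, then
\begin{equation*}
  \Bigl|x_i-\frac{p_i+\theta_i(x_i)}{m}\Bigr|\le \Bigl|x_i-\frac{p_i+\theta_i^{j,n}}{m}\Bigr|+\frac{|\theta_i(x_i)-\theta_i^{j,n}|}{m}<\frac{\mathfrak{q}c}{m^{1+w_i}}+\frac{\mathfrak{q}c}{m^{1+w_i}}=\frac{2\mathfrak{q}c}{m^{1+w_i}},
\end{equation*}
so $\bx\in 2\Delta_\ta^{j,n}(\bv)$. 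Symmetrically, for $2\Delta_\ta^{j,n}(\bv)\subset 4\tilde\Delta_\ta^{j,n}(\bv)$: if $\bx\in B_0^{j,n}$ satisfies $|x_i-(p_i+\theta_i(x_i))/m|<2\mathfrak{q}c/m^{1+w_i}$, add and subtract $\theta_i^{j,n}$ in the numerator and use the same error bound to get $|x_i-(p_i+\theta_i^{j,n})/m|<3\mathfrak{q}c/m^{1+w_i}<4\mathfrak{q}c/m^{1+w_i}$, hence $\bx\in 4\tilde\Delta_\ta^{j,n}(\bv)$. In both cases the containment in $B_0^{j,n}$ is preserved by definition of the scaled sets (the scaling only enlarges the $\mathfrak{q}$-factor, not the ambient rectangle), which is exactly the convention spelled out in the statement.

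The only genuine subtlety — the step I would single out as the one requiring care rather than mechanical manipulation — is verifying that the constant coming out of the displayed chain of inequalities is genuinely $\le 1$, i.e. that the side lengths of $B_0^{j,n}$ were chosen small enough (with the factor $1/\mathfrak{m}$ and $\mathfrak{q}$ and $c$ in the right places) to make the Lipschitz error no larger than a single width $\mathfrak{q}c/m^{1+w_i}$ rather than a large multiple of it. This is purely a bookkeeping check against the definitions \eqref{def: V_n}, \eqref{def: q}, and the rectangle side lengths, using $c<\beta^3$, $r_0<1$, $\mathfrak{q}<1$, and $0<w_i/(1+w_1)\le 1$; no further ideas are needed. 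One should also note that since $x_i$ appears on both sides of the defining inequality in \eqref{def: inho_interval}, strictly speaking $\Delta_\ta(\bv)$ is defined implicitly, but this causes no difficulty: the triangle-inequality manipulations above only ever use the inequality as a hypothesis on a given $\bx$ and produce a consequence for the same $\bx$, so no fixed-point or solvability issue arises.
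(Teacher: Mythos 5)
Your proposal is correct and matches the paper's own argument: both proofs bound the frozen-versus-true shift error $\tfrac{1}{m}|\theta_i(x_i)-\theta_i^{j,n}|$ by the Lipschitz constant times the $i$-th side length of $B_0^{j,n}$, convert it into at most one width $\tfrac{\mathfrak{q}c}{m^{1+w_i}}$ via the bound $m^{1+w_1}<cr_0^{-1}\beta^{-(n+2)}$ from \eqref{def: V_n} together with $c,r_0<1$, and then conclude both inclusions by the triangle inequality. Your explicit treatment of the second inclusion and of the implicit appearance of $x_i$ in \eqref{def: inho_interval} only spells out what the paper leaves as ``follows similarly.''
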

\begin{proof}
For any $x\in \tilde \Delta_{\ta}^{j,n}(\bv), $ where $\bv=\frac{\bp}{m}\in \mathcal{V}_n,$
$$\begin{aligned}
    \left|x_i-\frac{p_i+\theta_i(x_i)}{m}\right|<& \left|x_i-\frac{p_i+\theta_i^{j,n}}{m}\right|+ \frac{1}{m}\left\vert \theta_i(x_i)-\theta_i^{j,n}\right\vert\\
    & <\frac{\mathfrak{q}c}{m^{1+w_i}}+ \frac{ \mathfrak{q}r_0 c}{m\beta^{-\frac{(n+2)}{(1+w_1)}w_i}}\\
    & < \frac{\mathfrak{q}c}{m^{1+w_i}}+ \frac{\mathfrak{q}c r_0}{m^{1+w_i}}(cr_0^{-1}})^{\frac{w_i}{1+w_1} 
\end{aligned}$$
The last inequality follows since $\bv\in \mathcal{V}_n$, $\beta^{-\frac{(n+2)}{(1+w_1)}w_i}\geq m^{w_i}{(cr_0^{-1}})^{-\frac{w_i}{1+w_1}}$. Now the first inclusion follows since $c<1$ and $r_0<1.$
The last inclusion follows similarly.
\end{proof}
%\sd{$\bv=(\bq,p)$, then $\Delta_{\ta}(\bv):=\{\bx\in B_0~|~\vert \bq\cdot\bx+p+\theta\vert <\frac{\delta' c}{ \max_{i=1}^d \vert q_i\vert^{1/w_i}}\}$}

Note that from \eqref{eqn: union} and  the Definition  \eqref{def: inho_interval}, we get
$$
B_0\setminus \bigcup_{\bv\in \Q_0^d }\Delta_{\ta}(\bv)\subset \Bad_\ta(\bw).$$ Also since $B_0=\bigcup_{j=1}^{n^\star} B_0^{j,n}$ for $n\geq 1$, by  \eqref{eqn: union} and Definition \eqref{def: inho_interval}, we have $\bigcup_{\bv\in \Q_0^d}\Delta_{\ta}(\bv)= \bigcup_{n\geq 1}\bigcup_{\bv\in \mathcal{V}_n}\bigcup_{j=1}^{n^\star}\Delta_{\ta}^{j,n}(\bv).$
Combining the above two observations, we get  
\begin{equation}\label{eqn: danger_bad}
    B_0\setminus \bigcup_{n\geq 1}\bigcup_{\bv\in\mathcal{V}_n}\bigcup_{1\leq j\leq n^\star}\Delta^{j,n}_{\ta}(\bv)\subset \Bad_\ta(\bw).
    \end{equation}

%\sd{
%Note the following is true by $B_0=\bigcup_{j=1}^{n^\star} B_0^{j,n}$ 

    %\begin{equation}\label{eqn: danger_bad}
    %B_0\setminus \bigcup_{n\geq 1}\bigcup_{\bv\in\mathcal{V}_n}\bigcup_{1\leq j\leq n^\star}\Delta^{j,n}_{\ta}(\bv)\subset \Bad_\ta(\bw).
    %\end{equation}

%The above inclusion follows from \eqref{eqn: union} and  the Definition  \eqref{def: inho_interval},
%$$
%B_0\setminus \bigcup_{\bv\in \Q_0^d }\Delta_{\ta}(\bv)\subset \Bad_\ta(\bw), \bigcup_{\bv\in \Q_0^d}\Delta_{\ta}(\bv)= \bigcup_{n\geq 1}\bigcup_{\bv\in \mathcal{V}_n}\bigcup_{j=1}^{n^\star}\Delta_{\ta}^{j,n}(\bv).$$ 
%}
%\ls{In the last equation, do we mean $\Delta_{\ta}(\bv)=\bigcup_{n\geq }\bigcup_{j=1}^{n^\star}\Delta_{\ta}^{j,n}(\bv)$? } \sd{I meant the above, earlier one extra union over $\mathcal{V}_n$ was not there}

%Define \begin{equation}\Tilde{A}_{n+1,i_0}':=\{\bx\in B_n:\exists~ \bv\in\mathcal{V}_n,\bx\in\Delta_{\ta}(\bv)\}\label{excluded}.\end{equation}

Define \begin{equation}\Tilde{A}_{n+1,i_0}':=\{\bx\in B_n:\exists~ \bv\in\mathcal{V}_n, \text{ and } 1\leq j\leq n^\star, \bx\in\Delta^{j,n}_{\ta}(\bv)\}\setminus\bigcup_{0\leq n'<n,i'\geq0}A_{n'+1,i'}\label{excluded}.\end{equation}
%and for $n\geq 0,i\neq i_0$, define
%\begin{equation}
   % \Tilde{A}_{n+1,i}':=\Tilde{A}_{n+1,i}.
%\end{equation}
%\sd{ I think we don't have to define $\Tilde{A}_{n+1,i}'$ when $i\neq i_0.$}
Now we state the following key lemma which we prove in \S \ref{Proofcantorwin}.
%\begin{lemma}\label{cantorwinning}  Let $i_0$ be as in \eqref{def: i_0}.
     %There exists $0\leq\gamma<\alpha$ and $c>0$ (which is used to define $\Tilde{A}_{n+1,i_0}'$), such  that if $$ B_n\cap\cup_{0\leq n'<n,i'\geq0} \tilde A_{n'+1,i'}=\emptyset \quad \forall n\geq 0,$$ then there exists a cover $ \mathcal{A}'_{n+1,i_0}$ for $\Tilde{A}'_{n+1,i_0}$ consisting of balls of radius $\beta^{n+1+i_0}r_0$ 
 %\begin{equation}
 %\label{goal}
    %\# \mathcal{A}'_{n+1,i_0} \leq \beta^{-\gamma(i_0+1)}.
 %\end{equation}
 %\end{lemma}
 \begin{lemma}\label{cantorwinning}  Let $i_0$ be as in \eqref{def: i_0}.
     There exists $0\leq\gamma<\alpha$ and $0<c<\beta^3$ (which is used to define $\Tilde{A}_{n+1,i_0}'$), such  that there exists a cover $ \mathcal{A}'_{n+1,i_0}$ for $\Tilde{A}'_{n+1,i_0}$ consisting of balls of radius $\beta^{n+1+i_0}r_0$ 
 \begin{equation}
 \label{goal}
    \# \mathcal{A}'_{n+1,i_0} \leq \beta^{-\gamma(i_0+1)}.
 \end{equation}
 \end{lemma}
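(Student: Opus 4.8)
The plan is to cover the ``dangerous'' set $\Tilde A'_{n+1,i_0}$ in two stages: first replace the curved domains $\Delta^{j,n}_\ta(\bv)$ by the linearized ones $\tilde\Delta^{j,n}_\ta(\bv)$ using Lemma \ref{compare dangerous domain}, and then cover each $\tilde\Delta^{j,n}_\ta(\bv)$ efficiently using the absolutely decaying and Ahlfors regular properties of $\mu$ via Lemma \ref{cover}. The key point of working on the small rectangle $B_0^{j,n}$ is that there the inhomogeneous shift $\theta_i(x_i)$ is within $\mathfrak q c r_0 / (m\,\beta^{-\frac{(n+2)}{1+w_1}w_i})$ of the constant $\theta_i^{j,n}$, so on $B_0^{j,n}$ the set $\Delta^{j,n}_\ta(\bv)$ is sandwiched between dilates of the genuinely affine slab $\tilde\Delta^{j,n}_\ta(\bv)$. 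Thus it suffices to cover $\bigcup_{\bv\in\mathcal V_n}\bigcup_{j}\tilde\Delta^{j,n}_\ta(\bv)$, or rather a bounded dilate of it, by balls of radius $\beta^{n+1+i_0}r_0$.

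First I would fix $n$, $\bv=\bp/m\in\mathcal V_n$, and $j$, and analyze $\tilde\Delta^{j,n}_\ta(\bv)$. It is the intersection of the box $B_0^{j,n}$ with an axis-parallel rectangular neighborhood of the \emph{affine hyperplane} (in fact affine subspace) $\{x_i = (p_i+\theta^{j,n}_i)/m\}$ whose $i$-th half-width is $\mathfrak q c/m^{1+w_i}$. Because $w_1$ is the largest weight, the thinnest direction is the $1$-direction (one of them, if $t>1$), of width $\asymp \mathfrak q c/m^{1+w_1}\asymp \mathfrak q\,\beta^{n+1}r_0$ by the defining inequalities of $\mathcal V_n$ in \eqref{def: V_n}. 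The other directions have width $\mathfrak q c/m^{1+w_i}$, which is at least $\asymp \beta^{n+1}r_0$ (up to the factor $\mathfrak q<1$ and the exponent $w_i<w_1$). So $\tilde\Delta^{j,n}_\ta(\bv)$ is essentially a slab that is very thin ($\asymp \mathfrak q\beta^{n+1+i_0}r_0$ after accounting for the $i_0$-scale, using \eqref{def: i_0} and \eqref{def: q}) in one direction and of bounded-from-below width in the others, lying inside $B_0^{j,n}$. I would then apply the cover Lemma \ref{cover}: the measure $\mu$ of the $\beta^{n+1+i_0}r_0$-neighborhood of this slab, intersected with a ball of radius $\asymp \beta^n r_0$ (which $B_0^{j,n}$ is contained in, being a sub-rectangle of $B_0$ of comparable or smaller size in each direction), is controlled by the absolutely decaying estimate \eqref{absolutely decaying}: the ratio of the slab-thickness $\mathfrak q\beta^{i_0+1}\beta^n r_0$ to the ball radius $\beta^n r_0$ is $\mathfrak q\beta^{i_0+1}$, giving a factor $(\mathfrak q\beta^{(i_0+1)})^\delta$. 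Feeding this through Lemma \ref{cover} and Ahlfors regularity produces a cover of $\tilde\Delta^{j,n}_\ta(\bv)\cap\supp\mu$ by balls of radius $3\beta^{n+1+i_0}r_0$ of cardinality bounded by a constant times $A^2 D \cdot (\mathfrak q\beta^{i_0+1})^\delta \cdot \beta^{-\alpha}$ times geometric factors — and the precise numerical constants appearing in the definitions \eqref{def: i_0} and \eqref{def: q} of $i_0$ and $\mathfrak q$ are cooked up exactly so that this cardinality per $(\bv,j)$ is $\le 1$ (or a small absolute constant), so that each $\tilde\Delta^{j,n}_\ta(\bv)$ needs only $O(1)$ balls.

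Next I would count the pairs $(\bv,j)$. The number of $j$'s is $n^\star = 2^d\mathfrak m^d\beta^{-\frac{n+2}{1+w_1}}/(c^d\mathfrak q^d)$, which is $\asymp_{c,\mathfrak q,\mathfrak m,d}\beta^{-n/(1+w_1)}$. The number of $\bv=\bp/m\in\mathcal V_n$ requires counting rationals with $m^{1+w_1}\in[cr_0^{-1}\beta^{-(n+1)}, cr_0^{-1}\beta^{-(n+2)})$ and $\bp$ ranging over a bounded box (since $\bx\in B_0$ forces $p_i/m$ within $O(1)$ of $B_0$, hence $|p_i|\lesssim m$): for each such $m$ there are $\lesssim m^d$ choices of $\bp$, and $m\asymp (\beta^{-n}c r_0^{-1})^{1/(1+w_1)}$ with the number of admissible $m$ being $\lesssim m$ as well, so $\#\mathcal V_n \lesssim m^{d+1}\asymp (\beta^{-n})^{(d+1)/(1+w_1)}$ up to constants in $c,r_0$. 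Multiplying, $\#\{(\bv,j)\}\lesssim \beta^{-n(d+2)/(1+w_1)}$ times constants, and each contributes $O(1)$ balls, so the whole union is covered by $\lesssim C\beta^{-n(d+2)/(1+w_1)}$ balls of radius $3\beta^{n+1+i_0}r_0$; a standard subdivision (each radius-$3\rho$ ball splits into a bounded number of radius-$\rho$ balls, or equivalently absorb the $3$ by passing to $\beta^{n+1+i_0}r_0$-balls at the cost of a dimensional constant) turns this into balls of radius exactly $\beta^{n+1+i_0}r_0$. Finally, to get the bound $\beta^{-\gamma(i_0+1)}$ with $\gamma<\alpha$: note $i_0$ is a \emph{fixed} integer (depending only on $\mu$ and $\bw$ through $A,D,\delta,\alpha,s$), so $\beta^{-\gamma(i_0+1)}$ is a fixed positive power of $\beta^{-1}$; since $\beta=b^{-(1+w_1)}$ can be taken arbitrarily small (equivalently $b$ large), and the cover cardinality above is $\le C\beta^{-n(d+2)/(1+w_1)}$ — wait, this grows with $n$, so more care is needed here.

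\textbf{Correction to the counting step.} The exponent must not depend on $n$; the resolution, exactly as in \cite{BNY21}, is that $\Tilde A'_{n+1,i_0}$ is defined with the subtraction $\setminus\bigcup_{0\le n'<n}A_{n'+1,i'}$, i.e.\ one only needs to cover the part of $\bigcup_{\bv\in\mathcal V_n,j}\Delta^{j,n}_\ta(\bv)$ lying in $B_n$ and \emph{not} already removed. The point is that for $\bv\in\mathcal V_n$ the slab $\tilde\Delta^{j,n}_\ta(\bv)$ has its long directions of width $\gtrsim\beta^{n+1}r_0 = $ (radius of $B_n$), so inside the single Bob-ball $B_n$ of radius $\beta^n r_0$ each such slab, if it meets $B_n$, occupies an essentially fixed-shape region, and — crucially — the transference Lemma \ref{transfer} together with the dynamical interpretation (the $K_{\beta^{\eta l}}$ condition) shows that if $\bx\in B_n\cap\Delta^{j,n}_\ta(\bv)$ for some $\bv\in\mathcal V_n$ that is \emph{not} accounted for by the homogeneous obstructions $A_{n'+1,i'}$, then the relevant lattice $d_l a_{n+1+i_0}u_{\hat\bx}\Z^{d+1}$ fails to lie in $K_{\beta^{\eta}\cdot}$, which, by the already-established Lemma \ref{borrowed} and its cover $\mathcal A_{n+1,i_0}$, confines $\bx$ to a union of $\le\beta^{-\alpha'(i_0+1)}$ balls of radius $\beta^{n+1+i_0}r_0$. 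Thus $\Tilde A'_{n+1,i_0}$ is contained in such a controlled union, and intersecting with the absolutely-decaying cover of the slabs constructed above multiplies the count by only the $O(1)$ per-slab factor and the (fixed, $n$-independent after the subtraction) number of relevant $\bv$'s — giving $\#\mathcal A'_{n+1,i_0}\le \beta^{-\gamma(i_0+1)}$ for a suitable $\gamma\in[\alpha',\alpha)$, with $\gamma<\alpha$ secured by choosing $\beta$ small and using that $i_0$ is fixed. \textbf{The main obstacle} is precisely this last bookkeeping: making rigorous that after removing the homogeneous obstructions only boundedly many inhomogeneous slabs $\Delta^{j,n}_\ta(\bv)$, $\bv\in\mathcal V_n$, can survive inside $B_n$, so that the per-$n$ count does not blow up; this is where one genuinely merges the inhomogeneous constraints into the homogeneous strategy of \cite{BNY21}, and where the absolutely decaying hypothesis on $\mu$ is used a second time (beyond its use in \cite{BNY21}) to legally remove the surviving slabs. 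Everything else is the routine volume/counting bookkeeping whose constants are pre-arranged by \eqref{def: i_0}, \eqref{def: q}, \eqref{def: s}, \eqref{def: eta}.
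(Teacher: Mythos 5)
Your first stage (covering a \emph{single} slab) is essentially the paper's final step: sandwich $\Delta^{j,n}_{\ta}(\bv)$ between dilates of the affine slab $\tilde\Delta^{j,n}_{\ta}(\bv)$ via Lemma \ref{compare dangerous domain}, then use Lemma \ref{cover} together with Ahlfors regularity and the absolutely decaying estimate, with the constants in \eqref{def: i_0} and \eqref{def: q} arranged so that the count is at most $\beta^{-(\alpha-\delta/2)(i_0+1)}$. That part is fine. The genuine gap is exactly the step you flag as ``the main obstacle'' and then leave unproved: you never establish that, after removing the homogeneous obstructions, only boundedly many $\bv\in\mathcal V_n$ can contribute to $\Tilde A'_{n+1,i_0}$ inside $B_n$. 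Worse, the mechanism you sketch for it cannot work as stated: you claim that a surviving $\bx$ forces $d_l a_{n+1+i_0}u_{\bx}\Z^{d+1}\notin K_{\beta^{\eta}\cdot}$ and is then confined by the cover $\mathcal A_{n+1,i_0}$ from Lemma \ref{borrowed}. But $i_0$ is chosen with $s-1\nmid i_0$ precisely so that $A_{n+1,i_0}=\tilde A_{n+1,i_0}=\emptyset$ for all $n\geq 1$; the homogeneous sets are nonempty only at indices $i=(s-1)l$, so there is no homogeneous cover at index $i_0$ to confine anything, and the slot $i_0$ is free exactly so Alice can insert the inhomogeneous obstructions there.

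The paper closes this gap differently, and this is the heart of the lemma (Proposition \ref{final lemma-part1}). One shows $B_n\setminus\bigcup_{0\leq n'<n,i'\geq 0}A_{n'+1,i'}\subset D_n$, where $D_n$ is the set of $\bx$ whose lattice trajectory $a_yu_{\bx}\Z^{d+1}$ stays in a fixed compact set $K_\xi$ for $0<y\leq n$ (Lemma \ref{bounded}); by Mahler's transference (Lemma \ref{transfer}, via Corollary \ref{dual} and Proposition \ref{result}) such $\bx$ admit no nonzero integer solution of $|m|\leq Q$, $|mx_i-p_i|<\frac{k_1}{d}Q^{-w_i}$ for any $Q<b^{n-\lambda_1}$. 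If two distinct $\bv_1=\bp_1/m_1$, $\bv_2=\bp_2/m_2\in\mathcal V_n$ both had points $\bx_1,\bx_2$ in the surviving set, the \emph{difference} $(m_1-m_2,\bp_1-\bp_2)$ would be exactly such a forbidden solution with $Q=m_1-m_2$: the inhomogeneous shifts cancel up to $|\theta_i(x_{i,1})-\theta_i(x_{i,2})|\leq \mathfrak m\cdot 2r_0\beta^n$, which is absorbed because $B_n$ is tiny and $c$ in \eqref{def: c} is chosen small. Hence at most one $\bv$ survives, and Lemma \ref{lemma: unique j} limits the rectangles $j$ to at most $2^d$; only then does your slab-covering computation yield \eqref{goal} with $\gamma=\max\{0,\alpha-\delta/2\}$. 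Without this differencing argument (or a substitute), your per-$n$ count remains of size $\beta^{-n(d+2)/(1+w_1)}$, which grows with $n$ and cannot be bounded by $\beta^{-\gamma(i_0+1)}$, so the proposal as written does not prove the lemma.
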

 
 %\ls{With the change of definition in \eqref{excluded'}, there is no need to include the previous condition as well.}

%\blue{ We note that in the above lemma, the hypothesis is the same as in Lemma \ref{borrowed}, but the conclusion is about covering of the new sets $\tilde{A}'_{n+1,i_0}$ that we defined before.}

 \subsubsection{Proof of Theorem \ref{thm: main} modulo Lemma \ref{cantorwinning}} %To show \cref{thm: main}, by \cref{thm: twin main} and \cref{nonempty}, it suffices to show that $\mathbf{Bad}_{\ta}(\bw)$ is $\eta-$Cantor winning for some $0\leq\eta<\alpha$.
 We show $\mathbf{Bad}_{\ta}(\bw)$ is $\eta$-Cantor winning when $\eta=\min\{\gamma,\alpha'\}$, where $\alpha'$ is as in Lemma \ref{borrowed} and $\gamma$ is as in Lemma \ref{cantorwinning}, thus completing the proof of Theorem \ref{thm: main} in the view of Theorem \ref{thm: twin main} and Theorem \ref{nonempty}.
 
 %The strategy for Alice to win the game goes as follows inductively: In round $n$, Alice chooses the cover $\mathcal A'_{n+1,i},$ where for $i=i_0,$ we let $\mathcal A_{n+1,i_0}$ be as in \cref{cantorwinning}, while for $i\neq i_0,$ we let $\mathcal A_{n+1,i}'=\mathcal A_{n+1,i}$ be as in \cref{borrowed}.

Given Bob's moves $\beta, B_0,\cdots, B_n$ Alice chooses a sequence of $\{\mathcal{A}'_{n+1, i}\}_{i\geq 0},$ as follows:
\begin{equation}\label{eqn: strategy}
    \mathcal A'_{n+1,i}=\begin{cases}\begin{aligned}
& \mathcal{A}_{1,i} \text{ as in Lemma } \ref{borrowed} \text{ when } n=0, i\geq 0,\\ & \mathcal A'_{n+1,i_0} \text{ as in Lemma } \ref{cantorwinning} \text{ when } n\geq1, i= i_0, \\
\vspace{3 mm}
& \mathcal A_{n+1, i} \text{ as in Lemma } \ref{borrowed} \text{ when }n\geq1, i\neq i_0.

    \end{aligned}\end{cases}
\end{equation}

%\ls{Similarly, with the change of definition in \eqref{excluded'}, there is no need to justify any assumption}

 %Before we move on, we have to show the existence of such cover by showing that the assumption needed for \cref{cantorwinning} and \cref{borrowed} holds by induction. 
 %When $n=0, B_n\cap\cup_{0\leq n'<n,i'\geq0} \tilde A_{n'+1,i'}=\emptyset$ clearly holds because $\cup_{0\leq n'<n,i'\geq0} \tilde A_{n'+1,i'}=\emptyset$. Now if \begin{equation}
     %B_n\cap\cup_{0\leq n'<n,i'\geq0} \tilde A_{n'+1,i'}=\emptyset,\forall 0\leq n<N
 %\end{equation} then  
%since \begin{equation}
   % B_N\subset B_n,\forall 0\leq n<N
%\end{equation} in the game, it suffices to show that $B_N\cap\cup_{i'\geq0}\tilde A_{N,i'}=\emptyset$ to complete the induction $s-1\not|i_0$ and $\Tilde{A}_{n+1,i_0}=\emptyset\subset\Tilde{A}_{n+1,i_0}'$ by definition, we have 
%\begin{equation}
    %\label{containing}\Tilde{A}_{n+1,i}\subset\Tilde{A}_{n+1,i}',\forall N>n\geq0,i\geq0.
%\end{equation}

%As a consequence, $\forall \bx\in B_N,\bx\notin\bigcup_{n'< N,i'\geq 0}\Tilde{A}_{n'+1,i'}'\supset\bigcup_{n'< N,i'\geq 0}\Tilde{A}_{n'+1,i'},$ so $B_N\cap\cup_{0\leq n'<N,i'\geq0} \tilde A_{n'+1,i'}=\emptyset$ and the induction is completed. \ls{To be fixed later}

The strategy is legal since $\eta<\alpha$, and $\# \mathcal{A}_{n+1,i}'\leq \beta^{-\eta(i+1)}$ because of Lemma
\ref{cantorwinning} and Lemma \ref{borrowed}.

%Finally, for any $n\geq0,i\geq0$,
%\begin{equation}
    %\text{let }\mathcal{A}_{n+1,i}'\text{ be an efficient cover of }\Tilde{A}_{n+1,i}'\text{ by balls of radius }\beta^{n+1+i}r_0.
%\end{equation}

Let $\bx\in \cap_{n\geq 0} B_n$ be the outcome. By the definition of the game \ref{cantor game}, for every $n\geq 0,$ $B_{n+1}$ is disjoint from the sets in $\mathcal{A}_{n'+1,i'}$ where $n'+i'=n$, when $i'\neq i_0$ and disjoint from the sets in $\mathcal{A}'_{n'+1,i_0}$ where $n'+i_0=n$ by \eqref{eqn: strategy} . Thus 
%$\bx\notin \bigcup_{n\geq0,i\geq0}\bigcup_{A\in\mathcal A'_{n+1,i}}A \implies$
\begin{equation}\label{eqn: xnotin A'}
\bx\notin \bigcup_{n\geq1}\tilde A'_{n+1,i_0},
\end{equation} 
and 
\begin{equation}\label{eqn: xnotin union A}
    \bx\notin \bigcup_{n\geq0,i\geq0}\tilde A_{n+1,i}=\bigcup_{n\geq0,i\geq0}A_{n+1,i},
\end{equation}
since %the setting that $s-1\not|i_0$ and 
$\tilde A_{n+1,i_0}=\emptyset,\forall n\geq1.$ 

Next, we claim that for every $m\geq 1, 1\leq j\leq m^\star,$ and $\bv\in\mathcal{V}_m$, $\bx\not\in \Delta^{j,m}_{\ta}(\bv)$. If not, then %by \eqref{eqn: union}
$\bx\in \Delta^{j,m}_{\ta}(\bv)$ for some $\bv\in \mathcal{V}_m, 1\leq j\leq m^\star, m\geq 1$. Therefore, $\bx\in \tilde A'_{m+1,i_0}$ by \eqref{excluded} and \eqref{eqn: xnotin union A} as $\bx\in \cap_{n\geq 0} B_n$. This leads to a contradiction to \eqref{eqn: xnotin A'}. So we get $\bx\notin \bigcup_{n\geq 1}\bigcup_{\bv\in\mathcal{V}_n}\bigcup_{1\leq j\leq n^\star}\Delta^{j,n}_{\ta}(\bv)$. By \eqref{eqn: danger_bad} $\bx\in \mathbf{Bad}_{\ta}(\bw).$

%\begin{equation}
%\bx\in B_0\setminus\bigcup_{n\geq0,i\geq0}\bigcup_{A\in\mathcal A'_{n+1,i}}A\subset B_0\setminus\bigcup_{n\geq0}\tilde A'_{n+1,i_0}\stackrel{\ref{eqn: union},\ref{excluded}}{\subset} B_0\setminus\bigcup_{\bv\in\Q^d}\Delta_{\ta}(\bv)\stackrel{\ref{def: inho_interval}}{\subset}\mathbf{Bad}_{\ta}(\bw).
%\end{equation}

%The second half of the inequality above follows from the definition of $\mathbf{Bad}_{\ta}(\bw)$ and \cref{def: inho_interval}.

%\qed  %it suffices to show that the strategy is legal.

%For $n\geq 0,i\neq i_0$, we can simply let $\mathcal{A}_{n+1,i}'=\mathcal{A}_{n+1,i}$ where the latter cover exists as in \cref{borrowed} because \cref{containing} ensures that $\forall \bx\in B_n,\bx\notin\bigcup_{n'< n,i'\geq 0}\Tilde{A}_{n'+1,i'}'\supset\bigcup_{n'< n,i'\geq 0}\Tilde{A}_{n'+1,i'}$, thus $\mathcal{A}_{n+1,i}'$ is still an efficient as described in \cref{hom cover}.

 %The last step is to show the following lemma
 
 \subsection{Proof of Lemma \ref{cantorwinning}}\label{Proofcantorwin}
 %\sd{ One thing I am a bit confused about. In the statement of the following Lemma 3.4, we do not know what future $B_{n+1}$'s are in our game, and it is not used that $\bx\in B_n$ to deduce the conclusion. I think we are only using the fact that no matter what $B_n$'s are the strategy ensures $A_{n+1,i}$ is inside $\{x\in B_0: d_l a_{n+1+sl}u_{\bx}\notin K_{\beta^{\eta l}}\}$. And this is what we only use.}
 Denote
\begin{equation}\label{defn: D_n}
    D_{n}:=\{\bx\in B_0,d_1a_{n'+s}u_{\bx}\Z^{d+1}\in K_{\beta^{\eta},},\forall n'=1,...,n-1\},n\in\mathbb N.
\end{equation}
\begin{lemma}
\label{bounded}
    There exists $\xi>0$ such that for any $n\in\N,\forall ~
    %\bx\in B_n\setminus\bigcup_{0\leq n'<n,i'\geq0}A_{n'+1,i'}$,
    \bx\in D_{n}$, we have 
    \begin{equation}
        \{a_yu_{\bx}\Z^{d+1},0<y\leq n\}\subset K_{\xi}.
    \end{equation}
\end{lemma}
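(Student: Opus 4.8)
The statement to prove is Lemma~\ref{bounded}: for $\bx\in D_n$, the whole orbit segment $\{a_y u_\bx\Z^{d+1}:0<y\le n\}$ stays in a fixed compact set $K_\xi$, given that the discrete points $d_1 a_{n'+s}u_\bx\Z^{d+1}$ lie in $K_{\beta^\eta}$ for all $n'=1,\dots,n-1$. The plan is to interpolate: any $y\in(0,n]$ is close (additively, bounded distance) to some $n'+s$ with $1\le n'\le n-1$, so it suffices to bound how much $a_y$ can distort a lattice relative to $a_{n'+s}$, and how much the extra factor $d_1$ distorts it. Both $a_{y-(n'+s)}$ (for $|y-(n'+s)|$ bounded) and $d_1$ are fixed elements of a bounded subset of the diagonal group, so conjugation/multiplication by them moves $X_{d+1}$ only within a bounded neighborhood, which is the standard mechanism behind such ``shadowing'' statements.

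\textbf{Key steps in order.} First I would fix the bounded window: set $C_0:=\sup\{\|a_z^{\pm1}\|_{\mathrm{op}}: |z|\le s+1\}$ and $C_1:=\max(\|d_1\|_{\mathrm{op}},\|d_1^{-1}\|_{\mathrm{op}})$, both finite since $b>1$, $\beta<1$ are fixed. Second, given $0<y\le n$, choose $n'\in\{1,\dots,n-1\}$ with $|y-(n'+s)|\le s+1$ (possible once $n$ is not too small; the finitely many small $n$ are handled by the same argument with $n'$ the nearest available index, absorbing the extra shift into a slightly larger constant — note for $n\le s+1$ the orbit is itself a bounded perturbation of $\Z^{d+1}$). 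Third, write $a_y u_\bx\Z^{d+1} = a_{y-(n'+s)}\,d_1^{-1}\,\bigl(d_1 a_{n'+s}u_\bx\Z^{d+1}\bigr)$, and use that $d_1 a_{n'+s}u_\bx\Z^{d+1}\in K_{\beta^\eta}$ by hypothesis \eqref{defn: D_n}. Fourth, for any lattice $\Lambda\in K_\varepsilon$ and any $g\in\SL_{d+1}(\R)$ with $\|g^{-1}\|_{\mathrm{op}}\le C$, every nonzero $\bv\in g\Lambda$ satisfies $\|\bv\|_2\ge \|g^{-1}\bv\|_2/\|g^{-1}\|_{\mathrm{op}}\ge \varepsilon/C$; hence $g\Lambda\in K_{\varepsilon/C}$. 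Applying this with $g=a_{y-(n'+s)}d_1^{-1}$, so $\|g^{-1}\|_{\mathrm{op}}\le C_0 C_1$, yields $a_yu_\bx\Z^{d+1}\in K_{\beta^\eta/(C_0C_1)}$. Finally set $\xi:=\beta^\eta/(C_0C_1)$ (or a slightly smaller constant to accommodate the small-$n$ boundary cases uniformly), which is independent of $n$ and $\bx$, completing the proof.

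\textbf{Main obstacle.} The only genuinely delicate point is the bookkeeping at the ``edges'' — small $n$, and values of $y$ near $0$ or near $n$ where no index $n'+s$ with $1\le n'\le n-1$ lies within distance $s+1$. For $y$ near $0$: $a_y u_\bx\Z^{d+1}$ with $y$ bounded is a bounded perturbation of $u_\bx\Z^{d+1}$, and $u_\bx\Z^{d+1}$ contains $\be_0$-type short vectors but is a fixed lattice up to the bounded unipotent $u_\bx$ with $\bx\in B_0$ bounded, so it lies in some fixed $K_{\xi_0}$; for $y$ near $n$, use $n'=n-1$ and note $|y-(n-1+s)|\le s+1$. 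I would just enlarge the final constant to the minimum of all these finitely many contributions. This is routine but needs to be stated carefully so the constant $\xi$ is manifestly uniform in $n$ and $\bx\in D_n$; I would present it as: there is $\xi>0$ depending only on $d$, $b$, $\beta$, $s$, $\eta$, and $\diam B_0$.
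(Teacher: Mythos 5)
Your proposal is correct and follows essentially the same route as the paper's: both deduce the claim from the hypothesis by noting that $a_y u_{\bx}\Z^{d+1}$ is obtained from some $d_1 a_{n'+s}u_{\bx}\Z^{d+1}\in K_{\beta^{\eta}}$ by multiplication by a matrix drawn from a fixed bounded set, which can shrink the shortest nonzero vector only by a uniformly bounded factor. Your write-up is in fact more explicit than the paper's (which first passes to the integer times $a_{n'}$ with an explicit exponent and then appeals tersely to Mahler's criterion for the continuous interpolation), and your handling of the edge cases ($n=1$ and $y$ near the endpoints) fills in details the paper leaves implicit.
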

\begin{proof}
    Note that by the hypothesis,  %$\bx\notin A_{n',s-1}, \forall n'< n$, so\ls{To be removed} 
    $d_1a_{n'+s}u_{\bx}\Z^{d+1}\in K_{\beta^{\eta}}$. Therefore,
    \begin{equation}
    \label{finite arc}
        \forall n'\in\N\text{ with }0<n'< n,a_{n'}u_{\bx}\Z^{d+1}\in K_{\beta^{\eta+\frac{s}{1+w_1}+\frac{d+1-t}{d+1}}},
    \end{equation} where $t$ is as in $\eqref{defn: t}$.
    Note that the right hand side of \eqref{finite arc} is independent of $n$ and $\bx$, so by Mahler's criteria, the lemma follows. 
\end{proof}
Let $\xi$ be as in Lemma \ref{bounded}, and without loss of generality, we assume \begin{equation}
    \xi<\frac{d+1}{b(1+\mathfrak{m})}\label{xi}.
\end{equation}We define the following quantities:

\begin{equation}\label{def: lambda_1}\lambda_1:=\left\lceil \frac{1}{w_d}\log_b{\left(\frac{d+1}{\xi}\right)}\right\rceil,\end{equation} %\sd{ I think you mean $\log_b$ and not $\ln.$}
\begin{equation}
    k_1:=\frac{\xi}{d+1}b^{-\lambda_1}.
    \label{def: k_1}
\end{equation}
In the above definitions $\lambda_1$ and $k_1$ is dependent on $\beta$ since $\xi$ depends on $\beta.$
Also note from \eqref{def: lambda_1} and \eqref{order in w}, we have \begin{equation}\label{eqn: defn of lambda_1 gives}
b^{-\lambda_1 w_i}<\frac{\xi}{d+1}, 1\leq i\leq d.
\end{equation}
Also, \begin{equation}\label{upper bound on k_1r_1}
    k_1^{1+w_1}<\beta^{\lambda_1},
\end{equation}
since by \eqref{xi}, $\frac{d+1}{\xi}\stackrel{\eqref{def: k_1}}{=}k_1^{-1}b^{-\lambda_1}>1\implies k_1<b^{-\lambda_1}\stackrel{\eqref{beta}}{\implies} k_1^{1+w_1}<\beta^{\lambda_1}.$
Define $c$  to be
\begin{equation}
\label{def: c}
    c:=\frac{k_1^{1+w_1}\beta^4}{200d}.
\end{equation}
Note that 
\begin{equation}
\mathcal{V}_s=\emptyset, ~~\forall~~ 0\leq s\leq \lambda_1+1, s\in\N.\label{empty}\end{equation} 
This is because if $n\leq \lambda_1+1$, then $k^{1+w_1}<\beta^{\lambda_1}\leq\beta^{n-1}\stackrel{\eqref{def: c}}{\implies} c<\beta^{n+3}.$ Using \eqref{R_0'} this in particular implies $c r_0^{-1} \beta^{-(n+2)}<1$ which means by definition \eqref{def: V_n}, that $\mathcal{V}_n=\emptyset.$

%\sd{rewrote it see above} \gray{This is because if $\frac{\bp}{m}\in\mathcal{V}_n,$ \begin{equation}\label{upper bound on m}
%1\leq m\stackrel{\eqref{R_0'}, \eqref{def: c}}{<}\left(\frac{k_1^{1+w_1}}{d}\beta^{-(n-1)}\right)^{\frac{1}{1+w_1}}<b^{n-1-\lambda_1}.\end{equation} 

%By \cref{beta}, the last inequality holds if $\frac{k_1^{1+w_1}}{d } < \beta^{\lambda_1}$, which is true by \cref{upper bound on k_1r_1}. Now \cref{upper bound on m} implies for $q\leq \lambda_1+1$, there is no such $m$, hence $\mathcal{V}_n=\emptyset.$}

%In particular, $k_1<1$.\sd{ this is redundant line}

Then we have the following corollary.
%\sd{ Same remark as before, should not include $B_n$}
\begin{corollary}
\label{dual}
     For all $ n\in\N\text{ with }n>\lambda_1,~\forall ~1\leq H<b^{n-\lambda_1},~\forall 
     %\bx\in B_n$$\setminus\bigcup_{0\leq n'<n,i'\geq0}A_{n'+1,i'}$,\blue{
     \bx\in D_{n}$, there are no non-zero integer solutions $(z_0,z_1,\cdots,z_d)$ to \begin{equation}|z_0+\sum_{i=1}^d z_ix_i|\leq k_1H^{-1},|z_i|\leq H^{w_i},~ 1\leq i \leq d.\label{dual system}\end{equation}
\end{corollary}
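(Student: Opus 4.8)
The plan is to prove the statement pointwise by contradiction: if, for some such $n$, $H$, $\bx$, the system \eqref{dual system} had a nonzero integer solution $(z_0,z_1,\dots,z_d)$, I would build from it a nonzero vector in the lattice $a_yu_\bx\Z^{d+1}$ of Euclidean length strictly less than $\xi$, for a carefully chosen $y$ with $0<y\le n$. This contradicts Lemma \ref{bounded}, which applies precisely because $\bx\in D_n$. So the whole argument reduces to: choose $y$ correctly, then read off the coordinates of the obvious candidate vector.

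The right choice is $y:=\lambda_1+\log_b H$. The range check is immediate: $b>1$ by \eqref{beta} and $H\ge 1$, so $\log_b H\ge 0$ and hence $y\ge\lambda_1\ge 1$ (note $\lambda_1\ge 1$ since $\xi<d+1$ by \eqref{xi}); and $H<b^{n-\lambda_1}$ forces $\log_b H<n-\lambda_1$, hence $y<n$. Thus $0<y\le n$, and Lemma \ref{bounded} gives $a_yu_\bx\Z^{d+1}\in K_{\xi}$.

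Now I would test the lattice point $\bv:=a_yu_\bx(z_0,z_1,\dots,z_d)^T$, which is nonzero since $(z_0,\dots,z_d)\ne 0$ and $a_yu_\bx$ is invertible. From the shapes of $a_y$ and $u_\bx$,
\[
\bv=\Bigl(b^y\bigl(z_0+\textstyle\sum_{i=1}^d z_ix_i\bigr),\;b^{-w_1y}z_1,\;\dots,\;b^{-w_dy}z_d\Bigr),
\]
and, since $b^y=b^{\lambda_1}H$, the first coordinate is at most $b^yk_1H^{-1}=b^{\lambda_1}k_1=\xi/(d+1)$ by \eqref{dual system} and \eqref{def: k_1}, while for $1\le i\le d$ the $i$-th coordinate is at most $b^{-w_iy}H^{w_i}=b^{-w_i\lambda_1}<\xi/(d+1)$ by \eqref{dual system} and \eqref{eqn: defn of lambda_1 gives}. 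Hence $\|\bv\|_2\le\sqrt{d+1}\cdot\tfrac{\xi}{d+1}=\xi/\sqrt{d+1}<\xi$, contradicting $a_yu_\bx\Z^{d+1}\in K_{\xi}$.

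I do not expect any genuine obstacle here: this is essentially a bookkeeping corollary of Lemma \ref{bounded}. The only things to get right are that the definitions \eqref{def: lambda_1} of $\lambda_1$ and \eqref{def: k_1} of $k_1$ are calibrated so that the single choice $y=\lambda_1+\log_b H$ simultaneously bounds all $d+1$ coordinates of $\bv$ by $\xi/(d+1)$, and that the hypothesis $H<b^{n-\lambda_1}$ is exactly what keeps $y$ inside the interval $(0,n]$ on which the a priori compactness bound of Lemma \ref{bounded} is available. (If one prefers, the passage from \eqref{dual system} to a short lattice vector can instead be routed through the transference Lemma \ref{transfer}, but the direct computation above is shorter.)
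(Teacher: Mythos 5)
Your proposal is correct and follows essentially the same route as the paper: the paper also sets $t_0=\lambda_1+\log_b H$, forms the lattice vector $a_{t_0}u_{\bx}(z_0,\dots,z_d)^T$, bounds each coordinate by $\xi/(d+1)$ using \eqref{def: k_1} and \eqref{eqn: defn of lambda_1 gives}, and contradicts Lemma \ref{bounded}. The only cosmetic difference is that you spell out the range check $0<y\le n$ slightly more explicitly.
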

\begin{proof}
    We prove this by contradiction. Let $(z_0,z_1,\cdots,z_d)$ be a non-zero solution to \eqref{dual system} with a positive number \begin{equation}1\leq H<b^{n-\lambda_1}\label{H},\end{equation} and %$\bx\in B_n$$\setminus\bigcup_{0\leq n'<n,i'\geq0}A_{n'+1,i'}$,\blue{
    $\bx\in D_{n}$. Then, we let 
    \begin{equation}
        \label{t_0}
        t_0:=\lambda_1+\log_b H,
    \end{equation}
    Thus $t_0<n.$
    %\sd{Minor: I think you wanted to take $ t_0:=\beta\lambda_1+\ln H$. Also, mention that because of this choice, and the range of $H,$ $t_0<\beta q. $}
    Also, let
    \begin{equation}
        \label{b}
        \mathbf{b}:=(b_i)_{i=0}^d=a_{t_0}u_{\bx}\cdot (z_0,z_1,\cdots,z_d)^T.
    \end{equation}

    Thus, we have $b_0=b^{\lambda_1}H|z_0+\sum_{i=1}^d z_ix_i|\stackrel{\eqref{dual system},\eqref{def: k_1}}{\leq}\frac{\xi}{d+1}$ and $\forall 1\leq i\leq d$, $b_i=b^{-\lambda_1w_i}H^{-w_i}|z_i|\stackrel{\eqref{dual system}, \eqref{eqn: defn of lambda_1 gives}}{\leq}\frac{\xi}{d+1}$, thus $\Vert \mathbf{b}\Vert_2=\sqrt{\sum_{i=0}^db_i^2}<\xi$. This implies that there exists $\bx\in D_n,$  and by \eqref{t_0} and \eqref{H}, some  $t_0< n$ such that 
    
    $$a_{t_0} u_{\bx}\Z^{d+1}\notin K_{\xi},$$
    
    which is a contradiction to Lemma \ref{bounded}.
\end{proof}

\begin{proposition}
\label{result}
$\forall n\in\N\text{ with }n>\lambda_1,%\bx\in B_n$$\setminus\bigcup_{0\leq n'<n,i'\geq0}A_{n'+1,i'},$\blue{$
\bx\in D_{n}$ and $1\leq Q<b^{n-\lambda_1}$,there will be no non-zero integer solutions $(m,p_1,\cdots,p_d)$ to 
\begin{equation}
\label{simul system}
    |m|\leq Q,|mx_i-p_i|<\frac{k_1}{d}Q^{-w_i},\forall 1\leq i\leq d.
\end{equation}
\end{proposition}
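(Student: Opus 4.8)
The plan is to deduce Proposition \ref{result} from its dual counterpart, Corollary \ref{dual}, via Mahler's transference principle (Lemma \ref{transfer}). I would argue by contradiction: suppose that for some $n > \lambda_1$, some $\bx \in D_n$ and some $1 \le Q < b^{n-\lambda_1}$ there is a non-zero integer vector $(m,p_1,\dots,p_d)$ satisfying \eqref{simul system}. The goal is then to transfer this into a non-zero integer solution of \eqref{dual system} with $H = Q$, which is forbidden by Corollary \ref{dual}.

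To set up the transference, I would use the system of linear forms in $u_0,\dots,u_d$ given by $L_0(u) = u_0$ and $L_i(u) = x_i u_0 - u_i$ for $1 \le i \le d$. Its coefficient matrix $A = \begin{pmatrix} 1 & \mathbf{0}^T \\ \bx & -\mathrm{I}_d \end{pmatrix}$ satisfies $A^2 = \Id$ and $|\det A| = 1$, so the transposed system of Lemma \ref{transfer} is simply $L_0'(v) = v_0 + \sum_{i=1}^d x_i v_i$ and $L_i'(v) = -v_i$. Taking $T_0 := Q$ and $T_i := \tfrac{k_1}{d} Q^{-w_i}$ for $1 \le i \le d$, the hypothesis \eqref{simul system} says exactly that $(m,p_1,\dots,p_d)$ satisfies $|L_i(u)| \le T_i$ for all $0 \le i \le d$. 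Since $\sum_{i=1}^d w_i = 1$, one computes $\iota^d = \prod_{i=0}^d T_i = Q\cdot(k_1/d)^d\cdot Q^{-1} = (k_1/d)^d$, hence $\iota = k_1/d$, and Lemma \ref{transfer} then produces a non-zero integer vector $(v_0,\dots,v_d)$ with $|L_0'(v)| \le d\iota/T_0 = k_1 Q^{-1}$ and $|L_i'(v)| \le \iota/T_i = Q^{w_i}$ for $1 \le i \le d$.

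Finally I would read off the coordinates: $(z_0,z_1,\dots,z_d) := (v_0,v_1,\dots,v_d)$ is a non-zero integer vector with $|z_0 + \sum_{i=1}^d z_i x_i| \le k_1 Q^{-1}$ and $|z_i| \le Q^{w_i}$, which with $H := Q$ is precisely a solution of \eqref{dual system} for the same $n$ and the same $\bx \in D_n$, and with $1 \le H < b^{n-\lambda_1}$ --- contradicting Corollary \ref{dual}. I expect the whole argument to be short and essentially bookkeeping; the only delicate point, and the reason the constant $\tfrac{k_1}{d}$ (rather than $k_1$) appears in \eqref{simul system}, is that the numerology must be arranged so that the inequalities produced by transference coincide \emph{exactly} with those in \eqref{dual system}: the factor $d$ lost to the bound $d\iota/T_0$ is recovered from the $1/d$, and $\sum w_i = 1$ is what makes the powers of $Q$ cancel in $\iota$. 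I would also note in passing that $Q$ need not be an integer, and that the inequalities in the conclusion of \eqref{simul system} and in \eqref{dual system} are compatible (non-strict against non-strict), so no boundary subtlety arises.
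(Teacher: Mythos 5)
Your proposal is correct and is essentially identical to the paper's own proof: both argue by contradiction, set up the forms $L_0=u_0$, $L_i=x_iu_0-u_i$ with $T_0=Q$, $T_i=\frac{k_1}{d}Q^{-w_i}$, compute $\iota=\frac{k_1}{d}$ via $\sum_i w_i=1$, and apply Lemma \ref{transfer} to produce a solution of \eqref{dual system} with $H=Q$, contradicting Corollary \ref{dual}. Your extra bookkeeping (the determinant, the strict/non-strict compatibility, $Q$ not necessarily an integer) is sound and only makes explicit what the paper leaves implicit.
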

\begin{proof}
   Suppose by contradiction, $(m,p_1,\cdots,p_d)$ be a non-zero integer solution to \eqref{simul system} with a positive number $1\leq Q<b^{n-\lambda_1}$ and %$\bx\in B_n$$\setminus\bigcup_{0\leq n'<n,i'\geq0}A_{n'+1,i'}$,\blue{
   $\bx\in D_{n}$. Let $$L_0(m,p_1,\cdots,p_d):=m,L_i(m,p_1,\cdots,p_d):=mx_i-p_i,~\forall~ 1\leq i\leq d.$$ Now the transposed system is $L_0'(v_0,v_1,...,v_d)=v_0+\sum_{i=1}^dx_iv_i$ and $L_i'=-v_i,~\forall~ 1\leq i\leq d$. Then \eqref{simul system} implies \eqref{system1} with these $L_0,L_1,...,L_d$ and $T_0=Q,T_i=\frac{k_1}{d}Q^{-w_i},~\forall 1\leq i\leq d$. Now by Lemma \ref{transfer}, we have a non-zero integer solution $(v_0,v_1,\cdots,v_d)$ to \eqref{system2} with these $L_i', ~0\leq i\leq d$ and $\iota=\frac{k_1}{d}$. So $(v_0,v_1,...,v_d)$ as a non-zero integer solution to  \eqref{dual system} with $H=Q$, which is a contradiction to Corollary \ref{dual}.
\end{proof}

\begin{proposition}
    $\forall n\in\N,$ there is at most one $\bv\in\mathcal{V}_n$ such that $$\Delta_{\ta}(\bv)\cap B_n\setminus\bigcup_{0\leq n'<n,i'\geq0}A_{n'+1,i'}\neq\emptyset.$$\label{final lemma-part1}
\end{proposition}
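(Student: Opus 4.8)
The plan is to argue by contradiction: suppose there are two distinct rationals $\bv=\frac{\bp}{m}$ and $\bv'=\frac{\bp'}{m'}$ in $\mathcal{V}_n$ each of whose inhomogeneous domains meets $B_n$ away from the removed obstructions. First I would extract a point $\bx\in\big(\Delta_{\ta}(\bv)\cup\Delta_{\ta}(\bv')\big)\cap B_n$ lying outside $\bigcup_{0\le n'<n,\,i'\ge0}A_{n'+1,i'}$, so in particular $\bx\in D_n$ in the sense of \eqref{defn: D_n} (since avoiding all the $A_{n'+1,i'}$ for $i'=0$ forces $d_1 a_{n'+s}u_{\bx}\Z^{d+1}\in K_{\beta^\eta}$ for all relevant $n'$). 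The idea is then to use the difference $\bv-\bv'$ to manufacture a nonzero integer solution to a simultaneous system of the type \eqref{simul system}, and invoke Proposition \ref{result} to get a contradiction, provided the relevant $Q$ satisfies $Q<b^{n-\lambda_1}$.

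Concretely, the key computation: since both $\bv,\bv'\in\mathcal{V}_n$ we have $m^{1+w_1},\,(m')^{1+w_1}\in[cr_0^{-1}\beta^{-(n+1)},\,cr_0^{-1}\beta^{-(n+2)})$, so $m$ and $m'$ are comparable up to a factor $\beta^{-1/(1+w_1)}$. Set $\bq:=(m'\bp-m\bp',\,mm')$, i.e. look at $mm'(\bv-\bv')$. If $\bx\in\Delta_{\ta}(\bv)$ (the case $\bx\in\Delta_\ta(\bv')$ is symmetric), using the defining inequalities $|x_i-\frac{p_i+\theta_i(x_i)}{m}|<\frac{\mathfrak q c}{m^{1+w_i}}$ and the analogous ones for $\bv'$ — here the inhomogeneous terms $\theta_i(x_i)/m$ and $\theta_i(x_i)/m'$ do \emph{not} cancel exactly, so I would bound their difference by $|\theta_i(x_i)|\cdot|\frac1m-\frac1{m'}|$, which, because $|\frac1m-\frac1{m'}|\le\frac{|m-m'|}{mm'}\le\frac{1}{\min(m,m')}$ and $\theta_i$ is Lipschitz hence locally bounded on $B_0$, is of the same order $O(1/m)$ as the other terms — one obtains $|MM' \, x_i - (m'p_i - m p_i')| \lesssim \mathfrak q c\, (mm')\, m^{-w_i}$ up to absolute constants, i.e. a solution to \eqref{simul system} with $Q\asymp mm'$ and with $\frac{k_1}{d}$ replaced by a constant multiple of $\mathfrak q c$. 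Since $c$ is chosen extremely small (of size $k_1^{1+w_1}\beta^4/(200d)$, see \eqref{def: c}) and $\mathfrak q<1$, this constant multiple is $\le \frac{k_1}{d}$, so Proposition \ref{result} applies verbatim once we check the range condition.

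The range check is the step I expect to be the main obstacle, and it is where the constant $c$ in \eqref{def: V_n} has to be calibrated: we need $Q\asymp mm'<b^{n-\lambda_1}$. From $m^{1+w_1}<cr_0^{-1}\beta^{-(n+2)}$ we get $m< (cr_0^{-1})^{1/(1+w_1)}\beta^{-(n+2)/(1+w_1)}$, hence $mm'\lesssim (cr_0^{-1})^{2/(1+w_1)}\beta^{-2(n+2)/(1+w_1)}$; recalling $\beta=b^{-(1+w_1)}$ from \eqref{beta}, the bound $b^{n-\lambda_1}=\beta^{-(n-\lambda_1)/(1+w_1)}$ must dominate this, which amounts to requiring roughly $(cr_0^{-1})^2\beta^{-2(n+2)}<\beta^{-(n-\lambda_1)}$, i.e. $c^2 < r_0^2\,\beta^{\,n+4+\lambda_1}$. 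For fixed $n$ this fails for large $n$ — so the honest statement is that one must also use that the obstruction-removal has already taken place only down to scale $\beta^{n}r_0$ and that $\bv\in\mathcal V_n$ forces $m$ to be \emph{bounded below} by $cr_0^{-1}\beta^{-(n+1)}$ in the $(1+w_1)$-th power, giving matching lower bounds; then $mm'$ is pinned to an interval of multiplicative length $\beta^{-O(1)}$ around $\beta^{-2(n+1)/(1+w_1)}$, and a cleaner choice is to note $m,m'$ both satisfy $m< b^{\,?}$ with the exponent controlled linearly in $n$ so that $mm' < b^{n-\lambda_1}$ holds once $c$ is small and $n>\lambda_1$; tracking this precisely, using $c<\beta^3$ and \eqref{empty} (so $n\ge\lambda_1+2$), is the crux. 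Assuming the range condition holds, Proposition \ref{result} forbids the nonzero integer solution $(mm',\,m'\bp-m\bp')$ — and this vector is nonzero precisely because $\bv\ne\bv'$, so $m'\bp-m\bp'$ and $mm'$ cannot both vanish — yielding the desired contradiction and proving that at most one such $\bv$ exists.
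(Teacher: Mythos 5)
Your overall strategy (produce a nonzero integer vector from the two rational points and contradict Proposition \ref{result} via Corollary \ref{dual}/the transference setup) is the right one and matches the paper's, but your specific choice of vector, $(mm',\,m'\bp-m\bp')$ with $Q\asymp mm'$, creates a gap you yourself flag and do not close, and it is in fact unclosable. Since every $\bv=\bp/m\in\mathcal{V}_n$ has $m\asymp (cr_0^{-1})^{1/(1+w_1)}b^{\,n+O(1)}$, the product $mm'$ is of size $b^{2n+O(1)}$ up to constants depending only on $c,r_0$, while the admissible range in Proposition \ref{result} is $Q<b^{\,n-\lambda_1}$; the discrepancy grows like $b^{\,n}$, so no choice of the \emph{fixed} constant $c$ (which cannot depend on $n$) can restore the range condition. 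The smallness condition in \eqref{simul system} fails for the same structural reason: you would need $|Qx_i-p|<\tfrac{k_1}{d}Q^{-w_i}$, but the bound you derive is of order $\mathfrak{q}c\,(mm')\,m^{-w_i}$ (or at best $\mathfrak{q}c\,m'\,m^{-w_i}$), which exceeds $(mm')^{-w_i}$ by a power of $m$, not merely by a constant, so the claim that the discrepancy is absorbed because ``$c$ is extremely small'' is false. A secondary slip: you extract a single point $\bx\in(\Delta_{\ta}(\bv)\cup\Delta_{\ta}(\bv'))\cap B_n$, but then use the defining inequalities of \emph{both} $\bv$ and $\bv'$ at that one point; in general the two witnesses are different points, and one must use that both lie in $B_n$ (radius $\beta^n r_0$) to compare them.

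The paper's proof avoids all of this by taking the \emph{difference} vector $(m_1-m_2,\,\bp_1-\bp_2)$ rather than anything involving the product $m_1m_2$. With two witnesses $\bx_1,\bx_2\in B_n\setminus\bigcup A_{n'+1,i'}$ one has $|x_{i,1}-x_{i,2}|<2r_0\beta^n$, and the Lipschitz property of $\theta_i$ makes the inhomogeneous terms nearly cancel, giving $|(m_1-m_2)x_{i,1}-(p_{i,1}-p_{i,2})|<2(1+\mathfrak{m})m_2r_0\beta^n+2c\,m_2^{-w_i}$; multiplying by $(m_1-m_2)^{w_i}\le m_1^{w_i}$ and using the definition \eqref{def: V_n} together with \eqref{def: c} yields a bound $2c\beta^{-3}+2c\beta^{-1}<k_1/d$, exactly the quality needed in \eqref{simul system}. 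Crucially, $Q=m_1-m_2\le m_1$, and $m_1^{1+w_1}<cr_0^{-1}\beta^{-(n+2)}$ combined with $c=k_1^{1+w_1}\beta^4/(200d)$ and $k_1^{1+w_1}<\beta^{\lambda_1}$ gives $m_1<b^{\,n-\lambda_1}$, so the range condition holds automatically; the degenerate case $m_1=m_2$ is excluded because the right-hand side above is less than $1$, forcing $\bp_1=\bp_2$ and hence $\bv_1=\bv_2$. One also checks $B_n\setminus\bigcup A_{n'+1,i'}\subset D_n$ so that Proposition \ref{result} applies at $\bx_1$. In short, the missing idea in your proposal is to difference the two approximations at (essentially) a common point inside $B_n$ rather than to clear denominators by multiplying them, and without that replacement the argument as proposed does not go through.
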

\begin{proof}
If $n\leq\lambda_1,$ then $\mathcal{V}_n=\emptyset$ by \eqref{empty} and the proposition trivially holds. Now we assume that $n>\lambda_1.$ 

We prove by contradiction. Suppose there is $n\in\N$, and two such distinct $\bv_1=\frac{\bp_1}{m_1},\bv_2=\frac{\bp_2}{m_2}\in\mathcal{V}_n$, $\bp_{j}=(p_{i,j})_{i=1}^d,$ such that $\forall j=1,2,~ \exists~ \bx_j\in B_n\setminus\bigcup_{0\leq n'<n,i'\geq0}A_{n'+1,i'}$,
\begin{equation}
\label{inho_1}
    \left|x_{i,j}-\frac{p_{i,s}+\theta_{i}(x_{i,j})}{m_j}\right|<\frac{\mathfrak{q}c}{m_j^{1+w_i}}\stackrel{\eqref{q<1}}{<}\frac{c}{m_j^{1+w_i}},\forall~ 1\leq i\leq d.
\end{equation}

Without loss of generality,
\begin{equation}
    m_1\geq m_2. 
    \label{assume}
\end{equation}
Since $\bx_1,\bx_2\in B_n$$\setminus\bigcup_{0\leq n'<n,i'\geq0}A_{n'+1,i'}$, we have
\begin{equation}
    \forall~ 1\leq i\leq d, |x_{i,1}-x_{i,2}|<2r(B_n)=2r_0\beta^n.
    \label{diam}
\end{equation}
Then we have for any $1\leq i\leq d$,
\begin{equation}\label{ho1}
\begin{aligned}
    &|(m_1-m_2)x_{i,1}-(p_{i,1}-p_{i,2})|\\
    &\leq m_2|x_{i,1}-x_{i,2}|+\sum_{j=1,2}m_j\left |x_{i,j}-\frac{p_{i,j}+\theta_i(x_{i,j})}{m_j}\right|+\vert \theta_i(x_{i,1})-\theta_i(x_{i,2})\vert\\
    &\stackrel{\eqref{diam},\eqref{inho_1}}{<}2m_2r_0\beta^n+\sum_{j=1,2}\frac{c}{m_j^{w_i}}+2\mathfrak{m} r_0\beta^n\\
    &\stackrel{\eqref{assume}}{\leq}2(1+\mathfrak{m})m_2r_0\beta^n+\frac{2c}{m_2^{w_i}}.
\end{aligned}
\end{equation}
%\sd{$$
%\vert (\bq_1-\bq_2)\cdot\bx_1-p\vert \leq \Vert \bq_2\Vert\Vert \bx_1-\bx_2\Vert+\sum_{i=1,2}\vert\bq_i\cdot\bx_i+p_i+\theta\vert\leq b^{w_1n}\beta^{-n}+2b^{-n}\asymp b^{-n}
%$$}
Note we may assume $m_1>m_2$. Otherwise, as the right sides of \eqref{ho1} is $<1$,  $\bp_1=\bp_2$, giving $\bv_1=\bv_2,$ that contradicts our assumption.

Next, we claim that 
\begin{equation}\label{sol1}
  (m_1-m_2)^{w_i}|(m_1-m_2)x_{i,1}-(p_{i,1}-p_{i,2})|<\frac{k_1}{d}.
\end{equation}

This can be viewed as follows:
\begin{equation*}
\begin{aligned}
(m_1-m_2)^{w_i}|(m_1-m_2)x_{i,1}-(p_{i,1}-p_{i,2})|&\stackrel{\eqref{ho1},\eqref{assume}}{<}2(1+\mathfrak{m})m_1^{1+w_i}r_0\beta^n+\frac{2cm_1^{w_i}}{m_2^{w_i}}\\&\stackrel{\eqref{def: V_n}, \eqref{R_0'}}{<}2c\beta^{-3}+2c\beta^{-1}\stackrel{\eqref{def: c}}{<}\frac{k_1}{d}.
\end{aligned}
\label{abc1}
\end{equation*}

%$m_1^{1+w_1}< 2c|I_0|^{-1}R^{q+2}< 2cR^{q+3}< 2\frac{k_1}{10 n f_0} R^q$
We also note,
\begin{equation}m_1-m_2\leq m_1\stackrel{\eqref{R_0'}, \eqref{def: c}}{<}(\frac{k_1^{1+w_1}}{d}\beta^{-n})^{\frac{1}{1+w_1}}<b^ {-\lambda_1+n}.\end{equation}

By \eqref{beta}, the last inequality holds if $\frac{k_1^{1+w_1}}{d} < \beta^{\lambda_1}$, which follows from $\eqref{upper bound on k_1r_1}$ and the dimension $d\geq1.$ %This desired condition follows from $k_1\stackrel{\eqref{def: k_1}}{=}\frac{\xi}{d+1}b^{-\lambda_1}\stackrel{\eqref{beta}}{=} \frac{\xi}{d+1}\beta^{\frac{\lambda_1}{1+w_1}}\stackrel{\eqref{xi}}{<}\beta^{\frac{\lambda_1}{1+w_1}}\implies \frac{k_1^{1+w_1}}{d}\leq \frac{1}{d} \beta^{\lambda_1}. $ %Since $b^{\lambda_1 w_n}\stackrel{\eqref{def: lambda_1}}{\asymp} \frac{d+1}{\xi}\stackrel{\eqref{def: k_1}}{=}k_1^{-1}b^{-\lambda_1}\implies k_1\asymp b^{-\lambda_1(1+w_n)}= \left(\beta^{\lambda_1}\right)^{\frac{1+w_n}{1+w_1}}.$ Hence $\frac{k_1^{1+w_1}}{n}\ll \left(\beta^{\lambda_1}\right)^{1+w_n}<\beta^{\lambda_1}.$

%\sd{ The last line can be replaced by this (we don't need 3.29): By \eqref{def: k_1} $k_1=\frac{\xi}{d+1}b^{-\lambda_1}\stackrel{\eqref{beta}}{=} \frac{\xi}{d+1}\beta^{\frac{\lambda_1}{1+w_1}}\implies \frac{k_1^{1+w_1}}{d}\leq \frac{1}{d} (\frac{\xi}{d+1})^{1+w_1}\beta^{\lambda_1}. $ Now the conclusion follows using \eqref{xi}.}\ls{Fixed.}

 %\sd{Can you explain why the last inequality holds? We need $k_1/n< R^{-\lambda_1}$. But I think that is only true if $r_1=r_n.$ }

%\ls{I just made $c$ smaller. I think it should be fine now.}

We claim that $B_n\setminus\bigcup_{0\leq n'<n,i'\geq0}A_{n'+1,i'}\subset D_{n}.$ Let $\by\in B_n\setminus\bigcup_{0\leq n'<n,i'\geq0}A_{n'+1,i'},$ then we have $\by\in B_{n'},\forall~ 0<n'<n$ since the game requires that $B_n\subset B_{n-1}\subset...\subset B_0.$ Therefore for $1\leq n'<n,$ since $\by\notin A_{n'+1,s-1}$, $d_1a_{n'+1+s}u_\by\Z^{d+1}\in K_{\beta^\eta}\implies d_1a_{n'+s}u_\by\Z^{d+1}\in K_{\beta^\eta}$.%Therefore, for $1\leq n'<n,$ since $\by\notin A_{n'+1,s-1}$, it implies that $d_1a_{n'+s}u_\by\Z^{d+1}\in K_{\beta^\eta}$ from \eqref{def: original target set}. 
%\sd{Replace the last line: Therefore for $1\leq n'<n,$ since $\by\notin A_{n'+1,s-1}$, $d_1a_{n'+1+s}u_\by\Z^{d+1}\in K_{\beta^\eta}\implies d_1a_{n'+s}u_\by\Z^{d+1}\in K_{\beta^\eta}$. }
Thus from the definition of $D_n$ in \eqref{defn: D_n} the claim follows.

Since $m_1>m_2$, by \eqref{sol1}, the integer $(m_1-m_2,\bp_1-\bp_2)$ is a non-zero solution to the system \eqref{simul system} with $Q=m_1-m_2<b^{n-\lambda_1}$, with $\bx_1\in B_n\setminus\bigcup_{0\leq n'<n,i'\geq0}A_{n'+1,i'}\subset D_{n}$$,n>\lambda_1$, which leads to a contradiction to Proposition \ref{result}. Hence the proof of this proposition is complete.

\end{proof}
\begin{lemma}\label{lemma: unique j}
Suppose $\bv=\frac{\bp}{m}\in\mathcal{V}_n$,
%$$\Delta_{\ta}(\bv)\cap B_n\setminus\bigcup_{0\leq n'<n,i'\geq0}A_{n'+1,i'}\neq\emptyset.$$ 
then there exists upto $2^d$ many $1\leq j\leq n^\star$ such that 
$$\Delta_{\ta}^{j,n}(\bv)\neq\emptyset.$$

\end{lemma}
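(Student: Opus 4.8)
The plan is to show that the number of rectangles $B_0^{j,n}$ that meet a single fixed "dangerous" set $\Delta_{\ta}^{j,n}(\bv) = B_0^{j,n}\cap\Delta_\ta(\bv)$ is at most $2^d$, by comparing the diameter of the enlarged dangerous domain $2\tilde\Delta_\ta^{j,n}(\bv)$ to the side lengths of the rectangles in each coordinate direction. First I would recall from \eqref{def: inho_interval} and \eqref{def: inho_interval_new} that for $\bv=\frac{\bp}{m}\in\mathcal{V}_n$, the set $\tilde\Delta_\ta^{j,n}(\bv)$ is (an intersection with $B_0^{j,n}$ of) a box centered at the point with $i$-th coordinate $\frac{p_i+\theta_i^{j,n}}{m}$ and half-side $\frac{\mathfrak qc}{m^{1+w_i}}$ in the $i$-th direction. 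The key numerical comparison is between this half-side and the side length $\frac{r_0c\mathfrak q}{\mathfrak m\beta^{-\frac{(n+2)}{(1+w_1)}w_i}}$ of $B_0^{j,n}$ in direction $i$: using $m^{1+w_1}<cr_0^{-1}\beta^{-(n+2)}$ from the definition \eqref{def: V_n} of $\mathcal V_n$, one gets $m^{w_i}\le m^{w_i}$ and hence $\frac{1}{m^{1+w_i}} = \frac{1}{m\cdot m^{w_i}}$ compared against $\beta^{-\frac{(n+2)}{(1+w_1)}w_i}$; the upshot is that $\frac{\mathfrak qc}{m^{1+w_i}}$ is bounded by a fixed constant multiple (independent of $j,n,\bv$) of the side length of $B_0^{j,n}$ in the $i$-th direction, with the constant small enough (because of the $\mathfrak m^{-1}$ and $r_0<1$ factors) that the diameter of $\tilde\Delta_\ta^{j,n}(\bv)$ in each direction is at most the side length of $B_0^{j,n}$ in that direction.

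Next I would invoke \Cref{compare dangerous domain}, which tells us $\tilde\Delta_\ta^{j,n}(\bv)\subset 2\Delta_\ta^{j,n}(\bv)\subset 4\tilde\Delta_\ta^{j,n}(\bv)$, so it suffices to control the diameter of $\tilde\Delta_\ta^{j,n}(\bv)$ rather than that of $\Delta_\ta^{j,n}(\bv)$ directly (the two differ by replacing the genuinely $x$-dependent inhomogeneous term $\theta_i(x_i)$ by the locally frozen constant $\theta_i^{j,n}$, which is exactly why the rectangle subdivision was introduced). Now, since the $B_0^{j,n}$ for varying $j$ are a disjoint tiling of $B_0$ by congruent axis-parallel rectangles (intersected with $B_0$), and the set $\bigcup_j\tilde\Delta_\ta^{j,n}(\bv)$ over all $j$ for which the corresponding box is nonempty is contained in a single box of half-side $\frac{\mathfrak qc}{m^{1+w_i}}$ in direction $i$ — which has side length at most the side length of one tile in each direction — a standard pigeonhole/covering argument shows that a box whose side length in direction $i$ is at most the tile side length in direction $i$ can intersect at most $2$ tiles in that direction, hence at most $2^d$ tiles in total.

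The main obstacle I expect is the bookkeeping in the coordinatewise comparison of scales: one has to be careful that the comparison $\frac{\mathfrak qc}{m^{1+w_i}}\le (\text{side length of }B_0^{j,n}\text{ in direction }i)$ holds for \emph{every} $i$ simultaneously and with a uniform constant, using only the two-sided bound on $m^{1+w_1}$ from \eqref{def: V_n} together with $w_i\le w_1$, $\mathfrak q<1$, $c<\beta^3$, $r_0<1$, and $\mathfrak m>0$; the exponent $\frac{(n+2)}{(1+w_1)}w_i$ in the tile side length is precisely engineered (via the $\frac{1}{1+w_1}$ and the $w_i$) so that $\beta^{-\frac{(n+2)}{(1+w_1)}w_i}\ge m^{w_i}(cr_0^{-1})^{-\frac{w_i}{1+w_1}}$, which is the same inequality used in the proof of \Cref{compare dangerous domain}. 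Once that inequality is in hand, concluding that at most $2^d$ rectangles are hit — and in particular that $\Delta_\ta^{j,n}(\bv)\ne\emptyset$ for at most $2^d$ values of $j$ — is a routine geometric counting step. A minor point to get right is that intersecting with the (possibly irregularly shaped) ball $B_0$ only decreases the number of tiles met, so working with the full tiling of $\tilde B_0$ is harmless.
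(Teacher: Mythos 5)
Your overall geometric picture---bound the coordinatewise extent of the dangerous set by the tile side length, then count at most two tiles per direction, hence $2^d$ in total---is the right one, and your scale comparison between $\frac{\mathfrak qc}{m^{1+w_i}}$ and the tile side length $\frac{r_0c\mathfrak q}{\mathfrak m\beta^{-\frac{(n+2)}{(1+w_1)}w_i}}$ is essentially the computation the paper performs. However, the pivotal counting step rests on a claim that is not justified and, as literally stated, false: you assert that $\bigcup_j\tilde\Delta_\ta^{j,n}(\bv)$ (over the relevant $j$) is contained in a single box of half-side $\frac{\mathfrak qc}{m^{1+w_i}}$ in direction $i$. The boxes $\tilde\Delta_\ta^{j,n}(\bv)$ have $j$-\emph{dependent} centers $\bigl(\frac{p_i+\theta_i^{j,n}}{m}\bigr)_i$, because the frozen constant $\theta_i^{j,n}=\theta_i(b_i^{j,n})$ changes from tile to tile; for two tiles that are far apart these centers can differ by as much as $\frac{\mathfrak m}{m}\diam(B_0)$ in the $i$-th coordinate, so a priori the union is not one small box. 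Bounding the diameter of each individual $\Delta_\ta^{j,n}(\bv)$ or $\tilde\Delta_\ta^{j,n}(\bv)$ is also not enough, since each of these sets is contained in a single tile by definition and says nothing about how many $j$'s can occur. What is needed, and what Lemma \ref{compare dangerous domain} does not supply, is a diameter bound on the $j$-independent set $\Delta_\ta(\bv)$ itself.

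The paper closes exactly this gap by a direct absorption argument: if $\by_1,\by_2\in\Delta_\ta(\bv)$ lie in possibly different tiles, the triangle inequality gives $\vert y_{i,1}-y_{i,2}\vert\leq\frac{2\mathfrak qc}{m^{1+w_i}}+\frac{1}{m}\vert\theta_i(y_{i,1})-\theta_i(y_{i,2})\vert\leq\frac{2\mathfrak qc}{m^{1+w_i}}+\frac{\mathfrak m}{m}\vert y_{i,1}-y_{i,2}\vert$, and since $m\geq 3c^{-1}\beta^{-1}r_0^{-1}\gg\mathfrak m$ the last term is absorbed into the left-hand side, yielding $\vert y_{i,1}-y_{i,2}\vert<\frac{2\mathfrak qc}{m^{w_i}(m-\mathfrak m)}$, which is then checked to be smaller than the tile side length in direction $i$. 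Your proposal never performs this absorption (the term $\theta_i(x_i)$ is genuinely $x$-dependent inside $\Delta_\ta(\bv)$, and the detour through the frozen boxes does not remove that dependence uniformly in $j$), so as written the argument does not go through. Once you replace the ``single box'' claim by this two-point estimate on $\Delta_\ta(\bv)$, the remaining pigeonhole count of at most two tiles per coordinate direction is exactly the paper's conclusion.
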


\begin{proof}
    Suppose $1\leq j_1<j_2\leq n^\star$ such that 
    $\by_\iota\in \Delta_{\ta}^{j_{\iota},n}(\bv), \iota=1, 2.$ $$\left|y_{i,\iota}-\frac{p_{i}+\theta_{i}(y_{i,\iota})}{m}\right|<\frac{\mathfrak{q}c}{m^{1+w_i}}, \quad i=1,\cdots, d.$$
Then for $i=1,\cdots, d$, $$\begin{aligned}\vert y_{i,1}-y_{i,2}\vert \leq &\sum_{\iota=1,2} \left\vert y_{i,\iota}- \frac{p_{i}+\theta_i(y_{i,\iota})}{m}\right\vert + \frac{1}{m}\vert \theta_i(y_{i,1})-\theta_i(y_{i,2})\vert\\
\leq& \frac{2\mathfrak{q}c}{m^{1+w_i}}+ \frac{\mathfrak{m}}{m} \vert y_{i,1}-y_{i,2}\vert.\end{aligned}$$
Thus for $i=1,\cdots, d$,
$$
\vert y_{i,1}-y_{i,2}\vert < \frac{2\mathfrak{q}c}{m^{w_i}(m-\mathfrak{m})}<\frac{\mathfrak{q}c (r_0 \beta c)}{ 2\mathfrak{m}(cr_0^{-1})^{w_i} \beta^{-\frac{n+1}{1+w_1}w_i}},
$$
since $m-\mathfrak{m}> 2c^{-1}\beta^{-1} r_0^{-1} \mathfrak{m}.$  In $B_0^{j,n}$ the side length in the $i$-th direction is $$
\frac{\mathfrak{q}r_0 c}{\mathfrak{m}\beta^{-\frac{(n+2)}{(1+w_1)}w_i}}>\frac{\mathfrak{q}c (r_0 \beta c)}{ 2\mathfrak{m}(cr_0^{-1})^{w_i} \beta^{-\frac{n+1}{1+w_1}w_i}}, \text{ since } c, \beta, r_0<1.$$ Thus for each direction $1\leq i\leq d$, there are atmost two choices of $1\leq j\leq n^\star$, which concludes the lemma.
\end{proof}

%We need $\mathfrak{q} \beta^{-\frac{(n+2)}{(1+w_1)}w_i}<(cr_0^{-1})^{w_i}\beta^{-\frac{n+1}{1+w_1}w_i}\iff \mathfrak{q}<\beta^{\frac{1}{1+w_1}w_i} (cr_0^{-1})^{w_i}.$ By \eqref{R_0'}, the last inequality is true if $\mathfrak{q}<\beta^{(5+\lambda_1)}/{500 d}$, just need to make it small enough.

\subsubsection{Finishing the proof of Lemma \ref{cantorwinning} }
Now without loss of generality there exists some $\bv\in\mathcal{V}_n$ such that $\Delta_{\ta}(\bv)$ intersects with $B_n\setminus\bigcup_{0\leq n'<n,i'\geq0}A_{n'+1,i'}$ where  $B_n=\supp\mu\cap B(\bx_n,\beta^nr_0)$, otherwise $ \tilde A'_{n+1,i_0}=\emptyset$ and there is nothing to prove.

We choose a covering of $\tilde{A}'_{n+1,i_0}$ of radius $\frac{1}{3}\beta^{\alpha(n+i_0+1)}r_0$ as in Lemma \ref{cover} to be $\mathcal{A}_{n+1,i_0}'$. Now, note that by Definition \eqref{excluded} $$\tilde A'_{n+1,i_0}\subset \bigcup_{\bv\in\mathcal{V}_n} \bigcup_{1\leq j\leq n^\star}\Delta_{\ta}^{j,n}(\bv) \cap B_n \setminus\bigcup_{0\leq n'<n,i'\geq0}A_{n'+1,i'}.$$ 
By Proposition \ref{final lemma-part1}, in the above union there exists an unique $\bv_n=\frac{\bp}{m}\in\mathcal{V}_n$, $\bp=(p_i)_{i=1}^d$, and by Lemma \ref{lemma: unique j} $J_n\subset\{1,\cdots, n^\star\}, \# J_n\leq 2^d$ such that 
$$\Delta_{\ta}^{j,n}(\bv_n)\cap B_n\setminus\bigcup_{0\leq n'<n,i'\geq0}A_{n'+1,i'}\neq\emptyset, j\in J_n.$$
Thus 
$$\tilde A'_{n+1,i_0}\subset  \bigcup_{ j\in J_n}\Delta_{\ta}^{j,n}(\bv_n) \cap B_n .$$ 

Hence by Lemma \ref{cover}, 
\begin{equation}\label{eqn: cover1}
\begin{aligned}  \#\mathcal{A}_{n+1,i_0}'& \leq \frac{A\mu(B(\bigcup_{ j\in J_n}\Delta_{\ta}^{j,n}(\bv_n)\cap B_n,\frac{1}{3}\beta^{(n+i_0+1)}r_0))}{\beta^{\alpha(n+i_0+1)}(\frac{1}{3}r_0)^{\alpha}}\\ & \leq\sum_{ j\in J_n}\frac{A\mu(B(\Delta_{\ta}^{j,n}(\bv_n)\cap B_n,\frac{1}{3}\beta^{(n+i_0+1)}r_0))}{\beta^{\alpha(n+i_0+1)}(\frac{1}{3}r_0)^{\alpha}}\\
&\stackrel{Lemma~\ref{compare dangerous domain}}{\leq}\sum_{ j\in J_n}\frac{A\mu(B(2\tilde\Delta_{\ta}^{j,n}(\bv_n)\cap B_n,\frac{1}{3}\beta^{(n+i_0+1)}r_0))}{\beta^{\alpha(n+i_0+1)}(\frac{1}{3}r_0)^{\alpha}}.
\end{aligned}\end{equation}
Now note by Lemma~\ref{compare dangerous domain} $$\tilde\Delta_{\ta}^{j,n}(\bv_n)\subset B\left(\left\{\frac{p_1+\theta_1^{j,n}}{m}\right\}\times \R^{d-1}, \frac{\mathfrak{q}c}{m^{1+w_1}}\right).$$
Furthermore for any $r>0$,
$$B(2\tilde\Delta_{\ta}^{j,n}(\bv_n)\cap B_n, r)\subset B\left(\left\{\frac{p_1+\theta_1^{j,n}}{m}\right\}\times \R^{d-1} , \frac{\mathfrak{2q}c}{m^{1+w_1}}+ r\right)\cap B(\bx_n,\beta^nr_0+r).$$
Since $\bv_n=\frac{\bp}{m}\in \mathcal{V}_n$, we get $m^{1+w_1}> cr_0^{-1} \beta^{-(n+1)}.$
Meanwhile, from \eqref{def: q} and \eqref{def: i_0},we have $\frac{1}{3}\beta^{n+i_0+1}r_0\leq \frac{1}{3}\beta^{n+(i_0+1)/2}(A^2D6^{2\delta+\alpha})^{-1/\delta}r_0<\mathfrak q \beta^nr_0.$ So, when $r=\frac{1}{3}\beta^{n+i_0+1}r_0,$     $$B(2\tilde\Delta_{\ta}^{j,n}(\bv_n)\cap B_n, \frac{1}{3}\beta^{n+i_0+1}r_0)\subset B\left(\left\{\frac{p_1+\theta_1^{j,n}}{m}\right\}\times \R^{d-1} , 4\mathfrak{q}\beta^n r_0\right)\cap B(\bx_n,2\beta^nr_0):=L.$$
Using Definition \ref{ahlfors} and Definition \ref{def: absolutely decaying}
$$ \mu (L)\leq D (2\mathfrak{q})^{\delta}\mu(B(\bx_n,2\beta^nr_0))\leq AD(2\mathfrak{q})^{\delta}(2\beta^nr_0)^\alpha .$$
Now we get the last term in \eqref{eqn: cover1} to be
$$\begin{aligned}
&\leq \#  J_n A^2 D(2\mathfrak{q})^{\delta}(2\beta^nr_0)^\alpha \frac{1}{\beta^{\alpha(n+i_0+1)}(\frac{1}{3}r_0)^{\alpha}}\\
& \leq 
2^d A^2 D 2^\delta 2^{\alpha} \mathfrak{q}^\delta 3^\alpha \beta^{-\alpha(i_0+1)}\\
&\stackrel{ Definition ~\eqref{def: q}}{\leq}\beta^{-(\alpha-\frac{\delta}{2})(i_0+1)}.
\end{aligned}
$$
Finally, we let $\gamma=\max\{0,\alpha-\frac{\delta}{2}\}$ and we finish the proof.
%is unique, then %since $s-1$ doesn't divide $i_0$, 

\section{Nulity of weighted inhomogeneous bad in a manifold: Theorem \ref{measure-zero-theorem}}\label{null-sec}
We prove a general version of Theorem \ref{measure-zero-theorem}, namely as follows:

\begin{theorem}\label{thm: general_manifold}
    Let $\ta\in\R^n$, $\bw$ be a weight such that atleast $d$ many coordinates of $\bw$ are $\max_{i=1}^nw_{i}.$ Then for any nondegenerate manifold $\mathcal{M}$ of $\R^n$ for which at almost every point the tangent is not parallel to any axis, almost every point is not in $\Bad_{\ta}(\bw).$
\end{theorem}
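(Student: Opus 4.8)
The plan is to reduce Theorem~\ref{thm: general_manifold} to a full-measure (divergence-type) statement about a \emph{dual} Diophantine condition on $\cM$, to prove that statement by homogeneous dynamics, and finally to promote it from a single scale to all scales via the constant invariance mechanism of \cite{BDGW_null}. Write $\cM$ as the image of a nondegenerate map $\f=(f_1,\dots,f_n)\colon U\to\R^n$ with $U\subseteq\R^{\dim\cM}$ open; it suffices to show that $\{\bt\in U:\f(\bt)\in\Bad_{\ta}(\bw)\}$ is Lebesgue null. Since $\f(\bt)\notin\Bad_{\ta}(\bw)$ exactly when for every $\epsilon>0$ there are infinitely many $q$ with $\max_i|qf_i(\bt)-\theta_i|_{\Z}^{1/w_i}|q|<\epsilon$, and since the sets $\cW_\epsilon:=\{\bt\in U:\exists^{\infty}q,\ \max_i|qf_i(\bt)-\theta_i|_{\Z}\,|q|^{w_i}<\epsilon^{w_i}\}$ decrease in $\epsilon$, it is enough to prove that each $\cW_\epsilon$ has full measure in $U$: then $\bigcap_{k\in\N}\cW_{1/k}$ has full measure and lies in the complement of $\{\bt:\f(\bt)\in\Bad_{\ta}(\bw)\}$. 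Each $\cW_\epsilon$ is a $\limsup$ set, which is the shape required both for a Borel--Cantelli argument and for Lemma~\ref{CI_product}.

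Next I would apply the duality of \cite{BY} to replace the simultaneous inhomogeneous condition defining $\cW_\epsilon$ by a dual one. Concretely, $\f(\bt)\notin\Bad_{\ta}(\bw)$ is equivalent --- with constants adjusted, which is harmless since only the vanishing of the $\liminf$ matters --- to the assertion that for every $\epsilon>0$ the grid orbit $\{a_y\,u_{\f(\bt)}(\Z^{n+1}+\bz_{\ta}):y>0\}$ is unbounded in the space of unimodular grids, where $a_y$ is the diagonal flow attached to the dual weight, $u_{\f(\bt)}$ is the unipotent built from $\f(\bt)$ as in Lemma~\ref{transfer}, and $\bz_{\ta}$ encodes the inhomogeneous shift. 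The hypothesis that sufficiently many (at least $\dim\cM$) coordinates of $\bw$ equal $\max_i w_i$ is used here: grouping those coordinates lets me choose $a_y$ so that the orbit map restricted to $\cM$ still meets the nondegeneracy hypotheses needed below, and the hypothesis that the tangent to $\cM$ is nowhere parallel to an axis rules out the remaining degenerate directions.

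The technical core is then a divergence Borel--Cantelli statement in the dual picture. The event ``$\f(\bt)$ admits an $\epsilon$-good dual approximation of height in the dyadic block $[2^k,2^{k+1})$'' becomes an event $E_k\subseteq U$ of the form ``$a_{y_k}u_{\f(\bt)}(\Z^{n+1}+\bz_{\ta})$ contains a short vector''. I would establish a lower bound $\mu(E_k)\gg$ (a quantity whose sum over $k$ diverges), from geometry of numbers together with the nondegeneracy of $\f$, and a quasi-independence bound $\mu(E_j\cap E_k)\ll\mu(E_j)\mu(E_k)$ for $j\ne k$; the upper bounds on these correlation terms are precisely the quantitative nondivergence estimates of \cite{BKM} for nondegenerate manifolds, applied to the shifted lattice $u_{\f(\bt)}(\Z^{n+1}+\bz_{\ta})$. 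This is where \cite{BKM} enters crucially, and it is here that the inhomogeneous shift must be absorbed into the nondivergence machinery --- in the $\ta=\mathbf{0}$ case of \cite{BDGW_null} no dynamics is needed at all. The Borel--Cantelli lemma then gives that $\limsup_k E_k$, hence $\cW_{\epsilon_0}$ for one fixed $\epsilon_0$, has full measure in $U$.

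Finally, to pass from this single $\epsilon_0$ to every $\epsilon>0$ I would invoke Lemma~\ref{CI_product}: using the duality to present the relevant neighbourhoods in genuine product form over the block of coordinates carrying the maximal weight --- this is exactly where the $\ta=\mathbf{0}$ argument of \cite{BDGW_null}, which used Minkowski's convex body theorem, breaks down and where duality substitutes for it --- the lemma yields $\mu\big(\limsup\Delta(S_i,\bm C\bm\delta_i)\setminus\limsup\Delta(S_i,\bm c\bm\delta_i)\big)=0$, so $\cW_\epsilon$ is full for all $\epsilon\le\epsilon_0$ and hence for all $\epsilon$; intersecting over $\epsilon=1/k$ finishes Theorem~\ref{thm: general_manifold}. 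Theorem~\ref{measure-zero-theorem} follows (for a curve $\dim\cM=1$ makes the weight hypothesis vacuous and nondegeneracy forces the tangent condition almost everywhere), and Corollary~\ref{analytic} follows by fibering an analytic nondegenerate $\cM$ into nondegenerate curves and applying Fubini. I expect the main obstacle to be the last two steps together: making the quantitative nondivergence of \cite{BKM} function with both the inhomogeneous shift and the weighted, non-isotropic targets, while at the same time arranging enough product structure for the constant invariance lemma to apply --- this interlocking is precisely what the $\ta=\mathbf{0}$ method does not provide.
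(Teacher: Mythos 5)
Your overall toolbox (duality, quantitative nondivergence from \cite{BKM}, constant invariance via Lemma \ref{CI_product}) matches the paper's, but the argument you build from it has a genuine gap at its technical core. You propose a divergence Borel--Cantelli scheme: lower bounds on $\mu(E_k)$ with divergent sum plus quasi-independence $\mu(E_j\cap E_k)\ll\mu(E_j)\mu(E_k)$, for events defined by short vectors in the \emph{grid} $u_{\f(\bt)}(\Z^{n+1}+\bz_{\ta})$, with the upper bounds supplied by \cite{BKM}. This fails on two counts. First, \cite[Theorem 1.4]{BKM} is a nondivergence estimate for the homogeneous lattices $u_{\f(\bx)}\Z^{n+1}$; it does not apply to shifted lattices, and an inhomogeneous version at this level of generality is exactly the ``natural obstruction'' that blocks the naive extension of the homogeneous methods. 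Second, even granting such upper bounds, what your scheme really requires is the divergence half of an inhomogeneous weighted Khintchine theorem on nondegenerate manifolds (lower bounds plus quasi-independence), which is known only in very restricted cases (standard weight, curves, \cite{BVV11,BVVZ21}) and is much stronger than Theorem \ref{thm: general_manifold}. The paper avoids both problems: it applies \cite{BKM} only to the homogeneous \emph{dual} lattice $g_{c,k}^\star g_t^\star u_1(\bx)^\star\Z^{n+1}$ to show that the exceptional set $E$ (where $\lambda_1$ is small for all large $t$) is null (Lemma \ref{main_lemma_null}, Corollary \ref{F is full}); no lower bounds or correlation estimates are needed. The inhomogeneous shift is then handled by linear algebra rather than grid dynamics: for $\bx\notin E$, infinitely often $\lambda_{n+1}$ of the primal lattice is $\ll c^{-1}$ by \cite[Lemma 3.3]{BY}, so there are $n+1$ linearly independent short vectors, and rounding the real coefficients of the shift vector $(\ta',0)$ in this system produces a nonzero integer point realizing the inhomogeneous approximation (Proposition \ref{main_prp_null}). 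That rounding step is the key inhomogeneous idea of the proof and is absent from your plan.

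The final step of your sketch is also flawed as stated: Lemma \ref{CI_product} cannot upgrade ``$\cW_{\epsilon_0}$ is full for one $\epsilon_0$'' to ``$\cW_\epsilon$ is full for all $\epsilon$''. The lemma rescales neighbourhood radii around a \emph{fixed} sequence of centers $S_i$; but shrinking $\epsilon$ changes the admissible centers, because in the dependent coordinates the condition $\bigl\vert f_j\bigl(\tfrac{p_1+\theta_1}{q},\dots,\tfrac{p_d+\theta_d}{q}\bigr)-\tfrac{p_{d+j}+\theta_{d+j}}{q}\bigr\vert\le \epsilon\,q^{-(w_{d+j}+1)}$ is a condition on the center $(\bp,q)$ itself, not on $\bx$. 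The paper's argument is deliberately asymmetric on this point: the choice of $g_{c,k}$ makes the dynamical output of Proposition \ref{main_prp_null} carry an \emph{arbitrarily small} constant $1/s$ in the $m$ dependent coordinates (so the center set $S(\delta_2)$ can be taken with any $\delta_2$), while the constant in the $d$ independent coordinates is large and $\bx$-dependent; only that latter constant is removed by constant invariance (Corollary \ref{full-in-F}), after the Taylor-expansion comparison of Lemma \ref{inside_coro} puts the sets in product form. To close your argument you would need to reproduce this asymmetric treatment --- small constants in the $f$-coordinates coming directly from the dynamics, constant invariance used only in the parameter coordinates --- rather than invoking Lemma \ref{CI_product} to shrink $\epsilon$ globally.
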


 We recall the definition of nondegeneracy first. Let $\f:U\subset\R^d\to \R^n$ be map, where $U$ is an open subset of $\R^d.$  We say $\f$ is $l$-nondegenerate at $\bx$ if there exists a neighbourhood of $\bx$ inside $U$ where $\f$ is $C^l$ and it has $n$ many linearly independent partial derivatives of order $\leq l$. Moreover, we call $\f$ to be nondegenerate if there exists $l$ such that $\f$ is $l$-nondegenerate for almost every (Lebesgue) $\bx$ in $U.$ By the assumption of Theorem \ref{thm: general_manifold}, we can assume without loss of generality, that $\mathcal{M}$ is parametrized by $\f$, which is nondegenerate at almost every point in $U.$ Moreover, we can choose small enough $U$ such that $\f$ is also nonsingular. Under the condition on the manifold in Theorem \ref{thm: general_manifold} while working with any weight $\bw$, changing parametrization if required, we can assume without loss of generality that \begin{equation}\label{maxlessmin} \quad \min_{1 \leq i \leq d} w_{i} \geq \max_{1 \leq j \leq m} w_{d+j}.
    \end{equation}
and that \begin{equation}\label{parametrize}
\f(x)=(\bx, f_1(\bx),\cdots, f_m(\bx)),\text{ where } m=n-d.
\end{equation}
 The reason why \eqref{maxlessmin} is nonrestrictive is because almost every $\bx$ tangent is not parallel to any axis due to assumption on $\f$. Also, by shrinking $U$ if necessary, we assume there exists $M\ge 1$
 \begin{equation}\label{condi2}
    \max_{1\leq k\leq m}\max_{1\leq i,j\leq d}\sup_{\bx\in U}\max \left\{\vert {\partial_i\mathit{f}_k}(\bx)\vert,\vert {\partial^2_{i,j}\mathit{f}_k}(\bx)\vert\right\} \leq M.
 \end{equation}

Thus in order to prove Theorem \ref{thm: general_manifold}, we will prove the following theorem.
 \begin{theorem}\label{thm: more general}
Let $\ta\in\R^n$, $\bw$ be a weight in $\R^n$ satisfying
\begin{equation}\label{condition on w}
    w_1=w_2=\cdots=w_d=\max_{i=1}^n w_i.
\end{equation} Let $\f: U\subset \R^d\to \R^n$ as in \eqref{parametrize} be nondegenerate. Then almost every $\bx\in U$, $\f(\bx)$ is not in $\Bad_{\ta}(\bw).$
\end{theorem}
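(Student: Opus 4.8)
\textbf{Proof proposal for Theorem~\ref{thm: more general}.}

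The plan is to transfer the problem to homogeneous dynamics via duality and then combine a constant-invariance argument (in the spirit of \cite{BDGW_null}) with a quantitative nondivergence estimate (in the spirit of \cite{BKM}). First I would reformulate membership in $\Bad_{\ta}(\bw)$ for a point $\f(\bx)$: $\f(\bx)\in\Bad_\ta(\bw)$ iff there exists $c>0$ such that for all large $|q|$ one has $\max_i |qf_i(\bx)-\theta_i|_\Z^{1/w_i}|q|\ge c$. Complementarily, $\f(\bx)\notin\Bad_\ta(\bw)$ iff for every $c>0$ there are infinitely many $q$ with $|qf_i(\bx)-\theta_i-p_i|< c^{w_i}|q|^{-w_i}$ for suitable integers $p_i$. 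So the goal becomes: for a.e.\ $\bx$, the inhomogeneous system has solutions for arbitrarily small $c$. I would phrase this as a $\limsup$ set: with $\bm\delta_i$ the natural sequence of $n$-tuples of widths coming from $|q|\sim Q_i$ along a fixed sequence $Q_i\to\infty$, write $\Bad_\ta(\bw)^c\cap\f(U)$ (pulled back to $U$) as $\bigcup_{c>0}\bigl(\limsup_i \Delta_n(S_i,\bm c\bm\delta_i)\bigr)$ where $S_i$ is the set of rationals with denominator $\asymp Q_i$ shifted by $\ta$. By Lemma~\ref{CI_product} the $\mu$-measure of $\limsup_i\Delta_n(S_i,\bm C\bm\delta_i)$ is independent of the scaling constant, so it suffices to show that for \emph{one} fixed (large) constant $C$ the set $\limsup_i\Delta_n(S_i,\bm C\bm\delta_i)$ has full measure in $U$ — this is the constant-invariance reduction, and it is exactly the point where the homogeneous method of \cite{BDGW_null} (projection + Minkowski) breaks down inhomogeneously, so I replace it by Lemma~\ref{CI_product} combined with a dynamical input.

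Second, I would exploit duality as in \cite{BY}: the existence of $q$ and $\bp$ with $|qf_i(\bx)-\theta_i-p_i|$ small is, by Lemma~\ref{transfer} (Mahler's transference), tied to the existence of a dual integer vector $\bz=(z_0,\ldots,z_n)$ making the transposed linear form small, i.e.\ $|z_0+\sum z_i f_i(\bx) + (\text{inhomogeneous term})|$ small with $|z_i|$ bounded. Concretely, I would encode this in the lattice $u_{\f(\bx)}\Z^{n+1}$ (augmented with the $\ta$-shift to make it an affine/inhomogeneous lattice, i.e.\ a point in the space of affine lattices) and a diagonal flow $g_t=\diag(e^{t},e^{-w_1 t},\ldots,e^{-w_n t})$. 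The statement ``for a.e.\ $\bx$ the primal system has solutions for arbitrarily small $c$'' becomes, after transference, a statement that the orbit $\{g_t u_{\f(\bx)}\Z^{n+1}\}$ (resp.\ its affine analogue) is \emph{not} eventually bounded away from the cusp — equivalently, it returns infinitely often to a shrinking family of cusp neighborhoods. Using the Borel--Cantelli / $\limsup$ structure above, what I actually need is a lower bound: for the fixed constant $C$, a \emph{positive} proportion (independent of scale) of $\bx\in B$ lies in $\Delta_n(S_i,\bm C\bm\delta_i)$ for infinitely many $i$; then combined with the zero-one type input from Lemma~\ref{CI_product} (or a density/Lebesgue-point argument) one upgrades positive measure to full measure.

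Third, the positive-measure lower bound is where quantitative nondivergence of \cite{BKM} enters, used in the ``converse'' direction: nondegeneracy of $\f$ guarantees that the measure of $\bx\in B$ for which $g_t u_{\f(\bx)}\Z^{n+1}$ stays in a \emph{large} compact set $K_\varepsilon$ for all $t$ up to time $T$ decays — so in fact for a definite fraction of $\bx$ the orbit exits $K_\varepsilon$, i.e.\ the dual inequality has a solution at the relevant scale. I would run this at each scale $Q_i$ (time $t_i=\log Q_i$) to produce, for each $i$, a subset $E_i\subset B$ of measure $\ge \kappa\,\mu(B)$ of points whose dual system is solvable with constant $\le C$; unpacking via transference gives $E_i\subset \Delta_n(S_i,\bm C\bm\delta_i)$ up to adjusting $C$, and then reverse Borel--Cantelli (the sets $E_i$ being ``quasi-independent'' thanks to the same nondivergence estimate applied to pairs of scales, or more cheaply just using that $\limsup$ of sets of measure $\ge\kappa$ has measure $\ge\kappa$) gives $\mu(\limsup_i\Delta_n(S_i,\bm C\bm\delta_i))\ge\kappa>0$. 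Finally, applying Lemma~\ref{CI_product} to pass between constants and a Lebesgue density argument (the event is, after the constant-invariance, essentially invariant under the local structure of $\mu=$ Lebesgue, so positive measure forces full measure) yields that a.e.\ $\bx\in U$ has $\f(\bx)\notin\Bad_\ta(\bw)$. The technical conditions \eqref{maxlessmin}--\eqref{condi2} and the hypothesis \eqref{condition on w} ($w_1=\cdots=w_d=\max_i w_i$) are used precisely so that the $d$ ``free'' directions $\bx$ carry the largest weights, making the diagonal flow expand them maximally and letting the nondegeneracy of the remaining coordinates $f_1,\ldots,f_m$ drive the nondivergence estimate; the main obstacle, as flagged in the introduction, is that there is no inhomogeneous Minkowski theorem, so establishing the constant-invariance/``zero--one'' step rigorously — i.e.\ justifying that positive measure implies full measure — via Lemma~\ref{CI_product} together with the dynamical lower bound, rather than via convex-body geometry, is the crux of the argument.
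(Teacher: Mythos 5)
Your proposal assembles the right ingredients (duality, quantitative nondivergence, Lemma \ref{CI_product}) and correctly identifies why \eqref{condition on w} is needed, but the two steps that carry the inhomogeneous difficulty are not actually supplied. First, Mahler's transference (Lemma \ref{transfer}) is a purely homogeneous statement; it cannot produce solutions of $|qf_i(\bx)-p_i-\theta_i|$ small from smallness of a dual form, and you never say how the shift $\ta$ enters the dual picture. The paper's mechanism is different: it applies \cite[Theorem 1.4]{BKM} to the dual lattice $g_{c,k}^\star g_t^\star u_1(\bx)^\star\Z^{n+1}$ to show that the set $E$ of $\bx$ whose dual lattice eventually always satisfies $\lambda_1<c$ is null; by the duality of successive minima (\cite[Lemma 3.3]{BY}) the complement $\mathcal{F}$, which has full measure, consists of $\bx$ for which \emph{all} $n+1$ minima of the primal lattice $g_{c,k}g_t u_1(\bx)\Z^{n+1}$ are $\ll_n c^{-1}$ infinitely often. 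It is exactly this full-rank boundedness that substitutes for the missing inhomogeneous Minkowski theorem: the vector built from $\ta$ can then be written as a near-integer combination of $n+1$ short lattice vectors, so it lies within $O(c^{-1})$ of a nonzero lattice point (Proposition \ref{main_prp_null}), and a Taylor expansion converts this into infinitely many inhomogeneous solutions with a large constant in the first $d$ coordinates and constant $1/s$ in the last $m$. Your version, which only asks that the orbit exit a compact set at many times (i.e.\ that \emph{some} nonzero vector be short), yields homogeneous Dirichlet-type improvement and says nothing about hitting the shifted target; moreover your stated nondivergence input is inverted, since \cite{BKM} bounds the measure of points whose lattice has a very short vector, not the measure of points staying inside a large compact set.

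Second, the measure-theoretic endgame is a genuine gap. You plan to prove only a positive-measure lower bound $\geq\kappa$ for the limsup set and then upgrade to full measure via ``Lemma \ref{CI_product} plus a Lebesgue density / zero--one argument.'' Lemma \ref{CI_product} is not a zero--one law: it only says that rescaling the side lengths of the rectangles by fixed constants does not change the measure of the limsup set, and no zero--one law for these inhomogeneous limsup sets pulled back to a manifold is available or proved here. The paper never needs such an upgrade: since every $\bx\in\mathcal{F}$ outside a countable set admits infinitely many solutions with the large constant, the limsup set with that constant has full measure outright; Lemma \ref{CI_product} is then used only to shrink the constant in the free directions to an arbitrary $\delta_1$ (Corollary \ref{full-in-F}), and Lemma \ref{inside_coro} converts this into $\mathcal{L}_d(\f^{-1}\mathcal{W}_\delta)=\mathcal{L}_d(U)$ for every $\delta>0$, which is what kills $\Bad_\ta(\bw)$. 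Without the full-measure statement for $\mathcal{F}$ and without the all-minima-bounded mechanism, your argument cannot be completed as sketched.
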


For simplicity we denote $\max_{i=1}^n w_i:=w_{max}.$ Also we remark that the notations \S \ref{Sec3} and this section are independent of each other. For $\ta=(\theta_i)_{i=1}^n$, $\delta>0$, let $$\mathcal{W}_{\delta}:=\{\bx\in \R^n~|~ \max_{i=1}^n\vert qx_i-p_i-\theta_i\vert^{1/w_i}<\frac{\delta}{\vert q\vert} \text{ for infinitely many } (q, \bp)\in\Z^{n+1}\}.
$$

It follows from the definition that, $$\mathbf f^{-1}(\mathbf{Bad}_{\ta}(\bw))= U\setminus \bigcap_{\delta>0} \f^{-1}\mathcal{W}_{\delta}.$$

 %To prove Theorem \ref{measure-zero-theorem}, it is enough to show that $$\mathcal L_d(\mathbf f^{-1}(\mathbf{Bad}_{\ta}(\bw))\cap B_0)=0,$$ where $B_0$ is a small neighborhood around $\bx_0.$

Let \begin{equation}
    g_t= \diag\left\{e^{w_nt}, \cdots, e^{w_{d+1}t}, e^{w_d t}, \cdots,  e^{w_1t}, e^{-t}\right\},
\end{equation}

and for $k\in\N$ and $1>c>0$,\begin{equation}
    g_{c,k}:=\diag\{\underbrace{c^{-1}k, \cdots, c^{-1}k}_{m}, \underbrace{c^{(m+1)/d}/k^{m/d},\cdots, c^{(m+1)/d}/k^{m/d}}_{d}, c^{-1}\}.
\end{equation}
For any matrix $g$ in $\mathrm{GL}_{n+1}(\R)$, we recall the definition of dual matrix $g^\star=\sigma_{n+1} \left(g^{T}\right)^{-1} \sigma_{n+1},$ where $\sigma_{k}$ is the long Weyl element in $\mathrm{GL}_{k}(\R).$ Let $\lambda_{i}(g\Z^{n+1})$ denotes the $i$-th \textit{ successive minima} of the closed unit Euclidean ball centered at $\mathbf{0}$ with respect to lattice $g\Z^{n+1}$.

For any $\bx\in U$, we define $u_1(\bx)$ as
\begin{equation}
    u_1(\bx):=\begin{bmatrix}
        \mathrm{I}_m & -\sigma_m^{-1} J(\bx) \sigma_d & \sigma_m^{-1} h(\bx)^T\\
        0 & \mathrm{I}_d & \sigma_d^{-1} \bx^T\\
        0 & 0 & 1
    \end{bmatrix},
\end{equation}
where $h(\bx)= (f_1,\cdots,f_m)(\bx)-J(\bx)\bx^T,$ and $J(\bx)=[\partial_{j}f_i(\bx)]_{1\leq i\leq m, 1\leq j\leq d}.$ Let us define
\begin{equation}\label{defn: E}
   E_{t,k,c}:=\{\bx\in U~|~\lambda_1(g_{c,k}^\star  g_t^\star u_1(\bx)^\star\Z^{n+1})<c \}, {E}_{k, c}:= \bigcup_{s\in \N}\bigcap_{t\geq s} E_{t,k,c}, E:= \bigcup_{k\in\N}\bigcap_{c>0} E_{k, c}.
\end{equation}
\begin{lemma}\label{main_lemma_null}
Let $\bw$ be a weight satisfying \eqref{condition on w} and $k\in \N$. Let $\f:U\to\R^n$ be a map $\f(\bx)=(\bx, f_1(\bx), \cdots, f_{m}(\bx))$, which is $l$-nondegenerate at $\bx_0\in U$ and $U$ is an open set in $\R^d$. Then there exists  $c(k)>0$ and a neighborhood $B_0$ around $\bx_0$ such that for every $B\subset B_0,$ there exists $K>0$ such that for every $c\leq c(k),$

$$\mathcal{L}_d(E_{k,c}\cap B)\leq K \left( (2M)^{d-1}k^{2d-1} c^{\frac{n+1}{d}}\right)^{\alpha/n+1}
 \mathcal{L}_d(B), $$ 
where $\alpha= \frac{1}{d(2l-1)}$, where $M$ is as in \eqref{condi2}.
\end{lemma}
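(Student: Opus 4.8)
\textbf{Proof plan for Lemma~\ref{main_lemma_null}.}

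The plan is to follow the standard ``quantitative nondivergence'' route: reduce the measure bound for $E_{k,c}\cap B$ to an application of the Kleinbock--Margulis style nondivergence theorem from \cite{BKM} for the family of lattices $t\mapsto g_{c,k}^\star g_t^\star u_1(\bx)^\star\Z^{n+1}$, parametrized by $\bx$ in a small ball. First I would recall what it means for $\lambda_1$ of a unimodular lattice to be small: $\lambda_1(g_{c,k}^\star g_t^\star u_1(\bx)^\star\Z^{n+1})<c$ says there is a nonzero integer vector $\mathbf{z}$ whose image under $g_{c,k}^\star g_t^\star u_1(\bx)^\star$ has Euclidean norm $<c$; equivalently, this point lies in the set of $\bx$ for which a certain ``shrinking target'' is hit. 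The key structural input is that the map $\bx\mapsto g_{c,k}^\star g_t^\star u_1(\bx)^\star$ evaluated on each rational vector gives a polynomial (in fact degree $\le l$ after accounting for the derivatives appearing in $u_1(\bx)$ via $J(\bx)$ and $h(\bx)$) map of $\bx$; because $\f$ is $l$-nondegenerate at $\bx_0$, the associated family of functions is \emph{$(C,\alpha)$-good} on a neighborhood $B_0$ of $\bx_0$ with $\alpha=\tfrac1{d(2l-1)}$ — this is exactly the nondegeneracy-to-good-functions mechanism, and the exponent $\alpha$ matches the one quoted in the statement.

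The second ingredient is the \emph{nondivergence / nonexpansion} hypothesis: one must check that for every rational subspace (equivalently, every primitive integer sublattice) the corresponding ``volume function'' $\bx\mapsto \|g_{c,k}^\star g_t^\star u_1(\bx)^\star \mathbf{w}\|$ (extended multiplicatively over exterior powers) does not stay uniformly small on $B$. I would verify this by using that $g_t$ is a one-parameter diagonal flow expanding the last coordinate direction and contracting the others in a way compatible with the weight $\bw$ satisfying \eqref{condition on w}, together with nondegeneracy of $\f$, which guarantees that the trajectory of $u_1(\bx)^\star\Z^{n+1}$ does not remain in any fixed compact part of a rational-subspace locus. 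This is the place where the hypotheses $w_1=\cdots=w_d=w_{\max}$ and the structure of $g_{c,k}$ (whose diagonal entries are $c^{-1}k$ on the first $m$ slots, $c^{(m+1)/d}k^{-m/d}$ on the middle $d$ slots, $c^{-1}$ on the last) enter: they make $g_{c,k}$ unimodular up to the bookkeeping factors and control how the small parameter $c$ and the integer $k$ propagate into the final estimate. Once these two hypotheses of the quantitative nondivergence theorem are in place, applying it directly yields
\[
\mathcal{L}_d(E_{t,k,c}\cap B)\le K\,\bigl(c\cdot(\text{norm-distortion factor})\bigr)^{\alpha/(n+1)}\mathcal{L}_d(B),
\]
uniformly in $t\ge s$ for $s$ large; intersecting over $t$ and observing the distortion factor is $(2M)^{d-1}k^{2d-1}c^{(n+1)/d}$ (the $M$-powers coming from the bounds \eqref{condi2} on $\partial_i f_k,\partial^2_{ij}f_k$ that govern $\|u_1(\bx)\|$, the $k$-powers and the extra $c^{(n+1)/d}$ from the entries of $g_{c,k}$) gives the stated bound.

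The main obstacle I expect is the bookkeeping in the second ingredient: carefully tracking, for \emph{every} rational subspace $V$ of every dimension, the effect of the dual diagonal matrices $g_{c,k}^\star$ and $g_t^\star$ on the covolume $\|g_{c,k}^\star g_t^\star u_1(\bx)^\star(\Lambda_V)\|$, and checking that the ``$\beta$-condition'' (the covolume being bounded below on $B$) of the \cite{BKM} nondivergence theorem holds with constants that produce exactly the claimed power $\alpha/(n+1)$ and the claimed distortion $(2M)^{d-1}k^{2d-1}c^{(n+1)/d}$. In particular one has to be careful that the nondegeneracy of $\f$ at $\bx_0$ (not of $u_1(\bx)^\star$ directly) transfers to the non-expansion hypothesis for all intermediate exterior powers, which is where the factor $2l-1$ in $\alpha$ originates; I would handle this by invoking the now-standard lemma that $l$-nondegeneracy of $\f$ implies the requisite ``good and nonplanar'' property of the curve of lattices $\bx\mapsto u_1(\bx)^\star\Z^{n+1}$, after which the nondivergence theorem applies as a black box. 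The remaining steps — deducing the bound for $E_{k,c}$ from that for a single $E_{t,k,c}$ via the $\bigcup_s\bigcap_{t\ge s}$ structure, and absorbing constants into $K$ — are routine.
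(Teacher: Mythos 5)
Your overall strategy (a per-$t$ measure estimate via nondivergence from \cite{BKM}, then passing to $E_{k,c}$ through the $\bigcup_s\bigcap_{t\ge s}$ structure by monotonicity) matches the paper, and that last reduction is indeed routine. But the step you yourself flag as the main obstacle is where your route genuinely diverges and, as written, has a gap. You propose to apply the lattice-level quantitative nondivergence theorem directly to $\bx\mapsto g_{c,k}^\star g_t^\star u_1(\bx)^\star\Z^{n+1}$, which requires verifying, for every primitive sublattice of every rank, both $(C,\alpha)$-goodness and a lower bound on $\sup_B$ of the covolume, with explicit dependence on $c$, $k$ and $M$. There is no off-the-shelf ``good and nonplanar'' lemma for this family: $u_1(\bx)^\star$ contains the Jacobian entries $\partial_i f_j(\bx)$, and controlling the intermediate exterior powers of such matrices (the ``skew-gradient'' issue) is precisely the hard technical content of \cite{BKM}; moreover the functions involved are not polynomials of degree $\le l$ as you assert — they are merely $C^l$, and goodness must come from nondegeneracy, not polynomiality. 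Tracking how the $c$- and $k$-dependent diagonal entries of $g_{c,k}^\star$ degrade the covolume lower bounds through all exterior powers is exactly the bookkeeping you defer, and it is not resolved by citing nondivergence ``as a black box.''

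The paper avoids all of this by never touching exterior powers. It computes $g_{c,k}^\star$, $g_t^\star$ and $u_1^\star(\bx)$ explicitly and unfolds the condition $\lambda_1(g_{c,k}^\star g_t^\star u_1(\bx)^\star\Z^{n+1})<c$ into an explicit Diophantine system for a nonzero integer vector $(\ba,a_0)$: $\vert a_0+\ba\cdot\f(\bx)\vert\le e^{-t}$, $\Vert\nabla f(\bx)\ba^T\Vert\le c\,c^{(m+1)/d}k^{-m/d}e^{w_{max}t}$, and $\vert a_i\vert\le k e^{w_i t}$ for $d+1\le i\le n$; then \eqref{condi2} gives $\vert a_i\vert\le 2Mmk\,e^{w_{max}t}$ for $1\le i\le d$ (this is where \eqref{condition on w} enters, ensuring all the first $d$ directions carry the same exponent $w_{max}$). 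The choice of $c(k)$ and $t_0$ makes the product of these bounds at most $(2Mm)^{d-1}k^{n-1}c\,c^{(m+1)/d}<1$, which is precisely the hypothesis of \cite[Theorem 1.4]{BKM}; that theorem is stated at the level of such Diophantine systems, so only the nondegeneracy of $\f$ and this product condition need to be checked, and the claimed constants and the exponent $\alpha/(n+1)$ with $\alpha=\frac{1}{d(2l-1)}$ come out directly. If you want to complete your version, you would essentially be reproving \cite[Theorem 1.4]{BKM} for this specific family; the efficient fix is to perform the explicit dualization and parameter bookkeeping above and invoke that theorem instead of the raw nondivergence machinery.
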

\begin{proof}
We choose $c(k)$ such that for all $c\leq c(k),$ \begin{equation}\label{eq: c(k)}
(2Mm)^{d-1}k^{n-1}c c^{\frac{m+1}{d}}<1.
\end{equation}
From now on, $c\leq c(k)$.
First note that $$g_{c,k}^\star=\diag\{c, \underbrace{c^{-\frac{m+1}{d}}k^{m/d}, \cdots, c^{-\frac{m+1}{d}} k^{m/d} }_{d}, \underbrace{\frac{c}{k} ,\cdots,  \frac{c}{k}}_{m}, \}$$ and 
for $t\in\N$
$$
g_{t}^\star=\diag\{e^{t}, e^{-w_1 t}, \cdots, e^{-w_d t}, e^{-w_{d+1}t},\cdots, e^{-w_nt}\}.
$$
Also note that \begin{equation}\label{eqn:dual u_1(x)}
        u_1^\star(\bx)=\begin{bmatrix}
            1 & -\bx & -f(\bx)\\
            0 & \mathrm{I}_d & J(\bx)\\
            0 & 0   & \mathrm{I}_m
        \end{bmatrix}, f(\bx)=(f_i(\bx))_{i=1}^m.
    \end{equation}
Let us also choose $t_0$ such that for all $t>t_0=t_0(c,k)$
\begin{equation}\label{eq:t_0}
    e^{-t}< (2Mm)^{d-1}k^{n-1}c c^{\frac{m+1}{d}}<1.
\end{equation}

   %\sd{Remove: Suppose $s\in\N $ is such that $\bx\in \bigcap_{t\geq s}E_{t, k, c}$ then for all $t\geq s$}, 
   Suppose for some $t>t_0$, if $\bx\in E_{t,k,c}$ then there exists nonzero $(\ba, a_0)\in\Z^{n+1}$ such that 
   \begin{equation}\begin{aligned}
       &\vert a_0+\ba\cdot\f(\bx)\vert \leq  e^{-t}\\
       & \Vert \nabla f(\bx)\ba^T\Vert \leq c c^{\frac{m+1}{d}} \frac{1}{k^{m/d}}e^{w_{max} t} \\
       & \vert a_i\vert \leq k e^{w_i t}, d+1\leq i\leq n, \ba=(a_i)_{i=1}^n.
  \end{aligned} 
  \end{equation}
The second line follows from \eqref{condition on w} as we have that $w_{max}=w_1=w_2=\cdots=w_d.$ Using the last two inequalities in the system of inequalities above, we get that for all large $t$
$$\vert a_i\vert \leq \left( c c^{\frac{m+1}{d}}\frac{1}{k^{m/d}} e^{w_{max} t}+ Mm k \max_{j=1}^m e^{w_{d+j} t}\right)\leq 2Mm k e^{w_{max} t} , i=1,\cdots, d.$$%\ls{Do we need to replace $M k \max_{j=1}^m e^{w_{d+j} t}$ by $mM k \max_{j=1}^m e^{w_{d+j} t}$? Since there are $m$ terms of the form $a_i\partial _if_{j}(\bx),d+1\leq i\leq n$ in the $j$-th entry of $u_1^*(\bx)(a_0,\ba)^T.$ The bounds afterwards in the proof might need adjustment as well. }\sd{Done.}
 Note that since $ w_{max} d+\sum_{j=1}^m w_{d+j}=1,$
 
 $$   e^{-t} \left(c c^{\frac{m+1}{d}} \frac{1}{k^{m/d}}e^{w_{max} t}\right)(2M mk e^{w_{max} t})^{d-1} k^m e^{\sum_{i=d+1}^n w_i t}\leq (2Mm)^{d-1}k^{n-1}c c^{\frac{m+1}{d}}.$$

Now using \cite[Theorem 1.4]{BKM}, with $\delta=e^{-t}, K= \left(c c^{\frac{m+1}{d}} \frac{1}{k^{m/d}}e^{w_{max} t}\right), T_1=\cdots=T_d=2M mk e^{w_{max} t}, T_i=k T^{w_it}, d+1\leq i\leq n$, by \eqref{eq:t_0} we get for every $t>t_0$,
\begin{equation}\label{eqn: measure_every t}
\mathcal{L}_d(E_{t,k,c}\cap B)\ll \left((2Mm)^{d-1}k^{n-1} c^{\frac{n+1}{d}}\right)^{\alpha/n+1} \mathcal{L}_d(B),
\end{equation}
where $\alpha= \frac{1}{d(2l-1)}.$ %Now for any $s\in\N$, we choose $t_s>\max\{t_0,s\}$, large enough such that $\blue{\left(\frac{1}{t^2}\right)^{\alpha/n+1}}<\frac{1}{s^2}$ for all $t>t_s.$ 
Then we get from \eqref{eqn: measure_every t} for every $s\in\N,$
$$
\mathcal{L}_d(\bigcap_{t\geq s} E_{t,k,c}\cap B)\ll \left((2Mm)^{d-1}k^{n-1} c^{\frac{n+1}{d}}\right)^{\alpha/n+1} \mathcal{L}_d(B).
$$

Since the right hand side is independent of $s$, and $\bigcap_{t\geq s} E_{t,k,c}$ is increasing as $s\in\N$ increases, 
$$
\mathcal{L}_d(\bigcup_{s\in\N}\bigcap_{t\geq s} E_{t,k,c}\cap B)=\lim_{s\to\infty}\mathcal{L}_d(\bigcap_{t\geq s} E_{t,k,c}\cap B)\ll \left((2Mm)^{d-1}k^{n-1} c^{\frac{n+1}{d}}\right)^{\alpha/n+1} \mathcal{L}_d(B).
$$

Now the conclusion follows from the definition of $E_{k,c}$.

%\sd{remove everything after this}
%Now using \cite[Theorem 1.4]{BKM}, %we get that the measure of the above set is $$\ll \left( \delta^{m-d} e^{-t} c^{d} c^{d\frac{m+1}{d}}\delta^{d} e^{w_{max}d t} \delta^{-m} e^{\sum_{j=1}^m w_{d+j}t}\right)^{\alpha/n+1}.$$
 %we get for every $s\in\N$,
%$$
%\mathcal{L}_d(\bigcap_{t\geq s} E_{t,k,c}\cap B)\ll \left((2Mm)^{d-1}k^{n-1} c^{\frac{n+1}{d}}\right)^{\alpha/n+1} \mathcal{L}_d(B),
%$$
%where $\alpha= \frac{1}{d(2l-1)}.$
%\sd{There is an issue here!}

%\sd{Check again that the constant in the above $\ll$ is independent of $k, s$} 
%Since the right hand side is independent of $s$, the conclusion follows.

%$$\ll \left( \delta^{m-d} c^{d} c^{m+1}\delta^{d}  \delta^{-m}\right)^{\alpha/(n+1)}= c^{\alpha}.$$
   
\end{proof}

\begin{lemma}\label{nbhd-measure-zero}
Let us assume the hypothesis of the previous lemma. There exists a ball $B_0$ around $\bx_0$ such that for every $B\subset B_0$
$$\mathcal{L}_d(E\cap B)=0.$$
\end{lemma}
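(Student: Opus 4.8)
The strategy is to fix the ball $B_0$ and the constant $c(k)$ supplied by Lemma~\ref{main_lemma_null}, and then pass from the estimate on $E_{k,c}$ to the set $E$ by two nested limiting arguments, keeping track of the explicit measure bound. First I would fix $k\in\N$ and consider, for each fixed $c\le c(k)$, the set $\bigcap_{c'\le c}E_{k,c'}\cap B$; since $E_{k,c'}$ is monotone in $c'$ (decreasing the constant shrinks the set), this intersection is just $E_{k,c}\cap B$ up to the monotonicity, so by Lemma~\ref{main_lemma_null} we have
\begin{equation*}
\mathcal{L}_d\Bigl(\bigcap_{c>0}E_{k,c}\cap B\Bigr)\le \mathcal{L}_d(E_{k,c}\cap B)\le K\bigl((2M)^{d-1}k^{2d-1}c^{\frac{n+1}{d}}\bigr)^{\alpha/(n+1)}\mathcal{L}_d(B)
\end{equation*}
for every $c\le c(k)$. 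Letting $c\to 0^{+}$ on the right-hand side, and noting that $K$ depends only on $B$ (not on $c$), the bound tends to $0$; hence $\mathcal{L}_d\bigl(\bigcap_{c>0}E_{k,c}\cap B\bigr)=0$ for every $k\in\N$.

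Next I would take the union over $k$. By definition $E=\bigcup_{k\in\N}\bigcap_{c>0}E_{k,c}$, so
\begin{equation*}
\mathcal{L}_d(E\cap B)\le \sum_{k\in\N}\mathcal{L}_d\Bigl(\bigcap_{c>0}E_{k,c}\cap B\Bigr)=0,
\end{equation*}
which gives the claim for every $B\subset B_0$. The only subtlety is that $c(k)$ shrinks with $k$, but this is harmless: the inner estimate $\mathcal{L}_d(\bigcap_{c>0}E_{k,c}\cap B)=0$ is obtained separately for each $k$ by sending $c\to 0$ while $k$ is held fixed, so there is no need for a uniform-in-$k$ threshold. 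One should also record the measurability of all sets involved — $E_{t,k,c}$ is defined by a strict inequality on the continuous function $\bx\mapsto\lambda_1(g_{c,k}^\star g_t^\star u_1(\bx)^\star\Z^{n+1})$, hence open, and the countable Boolean operations in \eqref{defn: E} keep everything Borel — so that Lemma~\ref{main_lemma_null} legitimately applies and the countable subadditivity step is valid.

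\textbf{Main obstacle.} There is essentially no hard analytic content left here: the quantitative work is entirely inside Lemma~\ref{main_lemma_null}, whose proof already invoked the nondivergence estimate of \cite{BKM}. The one point that needs care is making sure the bound in Lemma~\ref{main_lemma_null} is genuinely of the form (constant depending only on $B$) $\times\, c^{\beta}$ with a \emph{positive} power of $c$ — i.e. that $\alpha/(n+1)>0$ and that $K$ does not secretly depend on $c$ — so that the $c\to 0$ limit actually kills the right-hand side; this is immediate from the statement as given. Thus the lemma follows by a routine two-step limiting argument from Lemma~\ref{main_lemma_null} together with countable subadditivity.
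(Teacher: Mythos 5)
Your proposal is correct and takes essentially the same route as the paper: both rest on Lemma~\ref{main_lemma_null} applied with $k$ fixed and $c\to 0$, followed by countable subadditivity over $k\in\N$. The paper merely packages the limit differently, choosing for each $k$ a $c_k\le c(k)$ so that $\mathcal{L}_d(E_{k,c_k}\cap B)\ll_B \varepsilon/k^2$ and summing, whereas you show directly that each $\bigcap_{c>0}E_{k,c}\cap B$ is null — an equivalent (and if anything slightly cleaner) bookkeeping of the same argument.
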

\begin{proof}
    Fix $\varepsilon>0$. For every $k\in\N$ there exists $c_k>0, c_k\leq c(k)$ such that $$\mathcal{L}_d(E_{k, c_k}\cap B)\ll_{B}\frac{\varepsilon}{k^2}\implies \mathcal{L}_d(\bigcup_{k\in\N} \bigcap_{c>0} E_{k, c}\cap B)\ll_{B} \sum \frac{\varepsilon}{k^2}\ll_{B} \varepsilon.$$ We can take $\varepsilon$ arbitrarily small, which gives us the conclusion.
\end{proof}

The complement of the set $E$ is 
\begin{equation}\label{def: F}
   \mathcal{F}:=  \bigcap_{k\in\N}\bigcup_{c>0} \{\bx\in U~|~\lambda_1(g_{c,k}^\star  g_t^\star u_1(\bx)^\star\Z^{n+1})\geq c, \text{ for infinitely many } t\in \N\}.
\end{equation}

Using \cite[Lemma 3.3]{BY} the above set is same as 

\begin{equation}
   \bigcap_{k\in\N}\bigcup_{c>0} \{\bx\in U~|~\lambda_{n+1}(g_{c,k}  g_t u_1(\bx)\Z^{n+1})\ll_{n}c^{-1}, \text{ for infinitely many } t\in \N\}.
\end{equation}
%\sd{Rewrite the statement of the lemma, what is $V?$ Is the neighbourhood of $\bx$ is denoted by $V_{\bx}$}\ls{fixed.}
\begin{lemma}\label{lindelof}
    If a $\mathcal L_d$-measurable set $X\subset \mathbb R^d$ %\sd{What does it mean?} 
    satisfies that for any $\bx\in X,$ there exists an open neighborhood $V_\bx$ in $\R^d$ with $\bx\in V_{\bx}$, such that $\mathcal L_d(X\cap V_{\bx})=0,$ then $\mathcal L_d(X)=0$.
\end{lemma}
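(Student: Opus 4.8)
The plan is to invoke the Lindelöf property of $\mathbb{R}^d$ (second countability) to reduce the given cover to a countable subcover, and then use countable subadditivity of Lebesgue measure. Concretely: the collection $\{V_{\bx}\}_{\bx \in X}$ is an open cover of $X$; since $\mathbb{R}^d$ is second countable, every open subset of $\mathbb{R}^d$ is Lindelöf, so $\bigcup_{\bx \in X} V_{\bx}$ (an open set) admits a countable subcover. That is, there is a countable set $\{\bx_j\}_{j \in \N} \subset X$ with $X \subseteq \bigcup_{j \in \N} V_{\bx_j}$.

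Once the countable subcover is in hand, I would write
\begin{equation*}
X = \bigcup_{j \in \N} \bigl( X \cap V_{\bx_j} \bigr),
\end{equation*}
and then estimate, using that $\mathcal{L}_d$ is an outer measure hence countably subadditive,
\begin{equation*}
\mathcal{L}_d(X) \leq \sum_{j \in \N} \mathcal{L}_d(X \cap V_{\bx_j}) = \sum_{j \in \N} 0 = 0,
\end{equation*}
where each term vanishes by the hypothesis $\mathcal{L}_d(X \cap V_{\bx}) = 0$ applied at $\bx = \bx_j$. Hence $\mathcal{L}_d(X) = 0$, as desired. (Measurability of $X$ is assumed, though it is not strictly needed since the argument works at the level of outer measure.)

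There is no real obstacle here; the only point requiring minor care is the appeal to the Lindelöf property, which one could instead make completely elementary by covering $X$ with the countably many open rational balls $B(\bq, \rho)$ with $\bq \in \bbQ^d$, $\rho \in \bbQ_{>0}$, that are contained in some $V_{\bx}$: each such ball has $\mathcal{L}_d(X \cap B(\bq,\rho)) = 0$, these balls cover $X$, and there are only countably many of them, so countable subadditivity finishes the proof exactly as above. I would present the rational-ball version to keep the argument self-contained.
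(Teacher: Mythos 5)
Your proposal is correct and follows essentially the same route as the paper: both extract a countable subcover via the Lindel\"of property of subsets of $\R^d$ and conclude by countable subadditivity of $\mathcal{L}_d$. The optional rational-ball variant you mention is a harmless, slightly more self-contained way of getting the same countable subcover.
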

\begin{proof}
    Notice that $\{X\cap V_{\bx},\bx\in X\}$ is an open cover of $X,$ then since $X$ as a subset of $\R^d$ is Lindelöf, there exists a countable subcover, say, $\{X\cap V_i,i\in\mathbb N\}.$ Then by assumption, $\forall i\in\mathbb N, \mathcal L_d(X\cap V_i)=0,$ thus $\mathcal L_d(X)=\mathcal L_d(\bigcup_{i\in\mathbb N}(X\cap V_i))\leq\sum_{i\in\mathbb N}\mathcal L_d(X\cap V_i)=0,$ which finishes the proof.
\end{proof}
\begin{corollary}\label{F is full} Let $E$ be defined as in \eqref{defn: E}. Then
    $$\mathcal L_d(E)=0.$$
\end{corollary}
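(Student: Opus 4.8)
The plan is to deduce Corollary \ref{F is full} from Lemma \ref{nbhd-measure-zero} together with the local-to-global principle of Lemma \ref{lindelof}. The set $E$ defined in \eqref{defn: E} is $\mathcal{L}_d$-measurable since each $E_{t,k,c}$ is (it is defined by a countable union over $(\ba,a_0)\in\Z^{n+1}$ of sets cut out by continuous inequalities in $\bx$), and $E$ is built from the $E_{t,k,c}$ by countably many unions and intersections. So Lemma \ref{lindelof} applies once we produce the required local null neighbourhoods.

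The key point is that the hypothesis ``$\f$ is nondegenerate'' means precisely that $\f$ is $l$-nondegenerate at $\bx_0$ for almost every $\bx_0\in U$, for some (locally fixed) $l$. First I would fix such a good point $\bx_0$. By Lemma \ref{nbhd-measure-zero}, applied at $\bx_0$ (whose hypotheses are exactly those of Lemma \ref{main_lemma_null}, namely $l$-nondegeneracy at $\bx_0$), there is a ball $B_0=B_0(\bx_0)$ around $\bx_0$ with $\mathcal{L}_d(E\cap B)=0$ for every $B\subset B_0$; in particular $\mathcal{L}_d(E\cap B_0)=0$. Hence for \emph{every} point $\bx_0$ in the full-measure set $U'\subset U$ of $l$-nondegeneracy, there is an open neighbourhood $V_{\bx_0}:=\operatorname{int}B_0(\bx_0)$ with $\mathcal{L}_d(E\cap V_{\bx_0})=0$.

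Now set $X:=E\cap U'$. For every $\bx\in X$ we have produced an open neighbourhood $V_\bx$ with $\mathcal{L}_d(X\cap V_\bx)\le \mathcal{L}_d(E\cap V_\bx)=0$, so Lemma \ref{lindelof} gives $\mathcal{L}_d(X)=0$, i.e.\ $\mathcal{L}_d(E\cap U')=0$. Since $U\setminus U'$ is $\mathcal{L}_d$-null (by the definition of nondegeneracy) and $E\subset U$, we get $\mathcal{L}_d(E)\le \mathcal{L}_d(E\cap U')+\mathcal{L}_d(U\setminus U')=0$, which is the claim. The only mild subtlety — and the one place to be careful — is the measurability of $E$ and the interplay between the ``almost every $\bx$'' in the definition of nondegeneracy and the pointwise hypothesis of Lemma \ref{nbhd-measure-zero}: one must restrict attention to the full-measure subset $U'$ of $l$-nondegenerate points before invoking the local statements, and only afterwards discard the null set $U\setminus U'$. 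Everything else is a direct bookkeeping of the countable operations defining $E$ and an application of the Lindelöf property already packaged in Lemma \ref{lindelof}.
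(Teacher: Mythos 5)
Your proposal is correct and follows essentially the same route as the paper: restrict to the full-measure set of nondegenerate points, invoke Lemma \ref{nbhd-measure-zero} to get a null neighbourhood of each such point, and globalize via the Lindelöf argument of Lemma \ref{lindelof}. Your additional remarks on the measurability of $E$ and on carefully discarding the null set of degenerate points are sound but only make explicit what the paper leaves implicit.
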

\begin{proof}
    Let $N=\{\bx_0\in U:\mathbf f$ is nondegenerate at $\bx_0\}$, which is of full measure. Then $\mathcal L_d(E)=\mathcal L_d(E\cap N).$ Therefore, applying Lemma \ref{nbhd-measure-zero} with $B=B_0$ and Lemma \ref{lindelof} with $X=E\cap N$, we conclude that $\mathcal L_d(E)=\mathcal L_d(E\cap N)=0.$
\end{proof}
Next, we have the main proposition of this section. 
\begin{proposition}\label{main_prp_null}
    Let $\ta\in\R^n\setminus \{\mathbf{0}\}$, $\f: U\subset\R^d\to\R^n$ be a $C^2$ map satisfying \eqref{parametrize} and $\bw$ be a weight satisfying \eqref{maxlessmin}. Suppose $\bx\in \mathcal{F}$ and $\bx$ is not of the form $\left(\frac{p_i+\theta_i}{q}\right)_{i=1}^d$ for some $(\bp,q)\in\Z^{n+1}$. Then for every $s\in\N$ there exists $c>0$ (that depends on $\bx$) such that for infinitely many $(\bp, q)\in\Z^{n+1}$ such that 
    \begin{equation}
        \begin{aligned}
            & \left\vert x_i-\frac{p_i+\theta_i}{q}\right\vert\ll_n \frac{  c^{-1} c^{-(m+1)/d}s}{q^{w_i+1}}, ~ i=1,\cdots,d,\\
            & \left\vert f_{j}\left(\frac{p_1+\theta_1}{q},\cdots, \frac{p_d+\theta_d}{q}\right)- \frac{p_{d+j}+\theta_{d+j}}{q}\right\vert \leq \frac{1 }{sq^{w_{d+j}+1}}, ~ j=1,\cdots, m.
        \end{aligned}
    \end{equation}

    \end{proposition}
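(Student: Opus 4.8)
\emph{Approach.} The plan is to read the two systems of inequalities directly off the geometry of numbers of the lattice $\Lambda_t:=g_{c,k}\,g_t\,u_1(\bx)\,\Z^{n+1}$, feeding in the inhomogeneous parameter $\ta$ by replacing $\Lambda_t$ with a translate of it, and handling the manifold coordinates $f_l$ by a Taylor expansion. First I would use the dual description of $\mathcal F$ recorded just after \eqref{def: F} (which comes from \cite[Lemma 3.3]{BY}): $\bx\in\mathcal F$ means that for every $k\in\N$ there is some $c=c(\bx,k)>0$ with $\lambda_{n+1}(\Lambda_t)\ll_n c^{-1}$ for infinitely many $t\in\N$. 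Given $s$, choose $k=k(s)\in\N$ (a suitable multiple of $s$) and fix the corresponding $c$. For each admissible $t$, Minkowski's second theorem produces a $\Z$-basis $\bw_1,\dots,\bw_{n+1}$ of $\Lambda_t$ with $\|\bw_j\|_2\ll_n c^{-1}$; hence, rounding off coordinates in this basis, every coset $\Lambda_t-\bm w$ contains a point of norm $\ll_n c^{-1}$, and (adding one basis vector whose $\Z^{n+1}$-preimage has nonzero last coordinate, if need be) such a point can be taken with nonzero last coordinate. I apply this with $\bm w=\bm w_t:=g_{c,k}g_tu_1(\bx)\,\bm\theta^{\sharp}$, where $\bm\theta^{\sharp}\in\R^{n+1}$ is the fixed vector with first $m$ block $\sigma_m^{-1}(\theta_{d+1},\dots,\theta_n)^{T}$, middle $d$ block $\sigma_d^{-1}(\theta_1,\dots,\theta_d)^{T}$ and last entry $0$; this is precisely the offset under which $u_1(\bx)$ converts each homogeneous residue $qx_i-p_i$, resp.\ $qf_l(\bx)-p_{d+l}$, into its inhomogeneous counterpart. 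This produces $\bz_t=(\ast,\ast,q)\in\Z^{n+1}$ with $q=q(t)\ne0$ and $\|g_{c,k}g_tu_1(\bx)(\bz_t-\bm\theta^{\sharp})\|_2\ll_n c^{-1}$.

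A direct matrix computation then unwinds this inequality coordinate by coordinate (exactly as in the unpacking of the dual system in the proof of Lemma \ref{main_lemma_null}): reading $\bp=(p_i)_{i=1}^n$ off $\bz_t$, the last coordinate gives $|q|\ll_n e^{t}$; the middle $d$ coordinates give $|qx_i-p_i-\theta_i|\ll_n c^{-1-(m+1)/d}k^{m/d}e^{-w_it}$ for $i\le d$; and the first $m$ coordinates give
\[
\Big|\,qf_l(\bx)-p_{d+l}-\theta_{d+l}-\sum_{i=1}^{d}\partial_if_l(\bx)(qx_i-p_i-\theta_i)\,\Big|\ \ll_n\ k^{-1}e^{-w_{d+l}t}\qquad(l\le m).
\]
The inner sum is the first-order Taylor term of $qf_l$ at $\bx$ towards $\tfrac1q(p_1+\theta_1,\dots,p_d+\theta_d)$, so a second-order Taylor estimate using the bound \eqref{condi2} replaces $f_l(\bx)$ by $f_l\!\big(\tfrac{p_1+\theta_1}{q},\dots,\tfrac{p_d+\theta_d}{q}\big)$ at the cost of an error that is $\ll_n M\,c^{-2(1+(m+1)/d)}k^{2m/d}e^{-2w_{max}t}/q$; since $w_{d+l}-2w_{max}<0$ by \eqref{condition on w}, this is $\ll_n k^{-1}e^{-w_{d+l}t}$ once $t$ is large. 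Dividing the displays by $q$ and using $|q|\ll_n e^{t}$ (so $e^{-w_it}\ll_n|q|^{-w_i}$), and finally rescaling $c$ by an $s$-dependent factor to absorb $k^{m/d}$ (with $k\asymp s$) into the stated $c^{-1}c^{-(m+1)/d}s$ and $k^{-1}$ into the stated $s^{-1}$, one obtains precisely the two asserted inequalities.

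The one genuinely new point — and the main thing to get right — is this incorporation of $\ta$ via the translate $\Lambda_t-\bm w_t$: one must verify that translating still yields a short vector with a nonzero, correctly sized last coordinate, and that the Taylor mechanism survives for the translated quantities (the nondegeneracy of $\f$ being used only through \eqref{condi2}, and the interplay of $c$, $k$, $s$ only through routine bookkeeping). That the resulting $(\bp,q)$ are \emph{infinitely many} is where $\bx\in\mathcal F$ is used fully: it supplies infinitely many admissible $t$, and the hypothesis that $\bx$ is not of the form $\big(\tfrac{p_i+\theta_i}{q}\big)_{i=1}^d$ forces $q(t)\to\infty$ along them (a bounded subsequence of denominators with $|qx_i-p_i-\theta_i|\to0$ would exhibit $\bx$ in that excluded form), so distinct $t$ contribute distinct pairs.
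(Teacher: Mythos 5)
Your proposal is correct and is essentially the paper's own argument: the paper likewise uses the dual description of $\mathcal F$ to get $\lambda_{n+1}(g_{c,s}g_{t}u_1(\bx)\Z^{n+1})\ll_n c^{-1}$ along infinitely many $t$, approximates the shift vector $g_{c,s}g_t\widetilde u_1(\bx)(\ta',0)$ (which coincides with your $g_{c,k}g_tu_1(\bx)\boldsymbol{\theta}^{\sharp}$, since the last entry of $\boldsymbol{\theta}^{\sharp}$ is $0$) by a nearby lattice point via rounding against $n+1$ short independent lattice vectors — the same mechanism as your coset/covering-radius step — and then unwinds coordinates, Taylor-expands using \eqref{condi2} and \eqref{condition on w}, and invokes the hypothesis that $\bx$ is not of the excluded form to get infinitely many pairs $(\bp,q)$. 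The only local deviation is how degeneracy is ruled out: the paper shows the rounded integer combination is nonzero using $\ta\neq\mathbf 0$ and $t\to\infty$, whereas you force the last coordinate $q\neq 0$ directly by adding a basis vector with nonzero last coordinate, which is equally valid (and in line with the paper's remark that $\ta\neq\mathbf 0$ is not actually needed), with the remaining discrepancies in constants ($k\asymp s$, $s$ versus $s^{m/d}$) being harmless bookkeeping present in the paper as well.
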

\begin{proof}
%By our assumption $w_{max}:=w_1=\cdots= w_d\geq \max_{j=1}^m w_{d+j}$. 
Suppose $\bx\in \mathcal{F},$ then for every $s\in\N$ there exists $c>0$ such that for infinitely many $t_i\in\N$, 
$$\lambda_{n+1}(g_{c,s}  g_{t_i} u_1(\bx)\Z^{n+1})\ll_{n}c^{-1}.$$

This implies there are $n+1$ linearly independent vectors 
$\bv_k= g_{c,s} g_{t_i} u_1(\bx)\bmi_k^T, \bmi_k\in\Z^{n+1}\setminus \{\mathbf{0\}}, 1\leq k\leq n+1$ in the a ball of radius $\asymp_n c^{-1}$ around $\mathbf{0}.$
Thus there are $\gamma_k\in\R$ such that 
$$
g_{c,s} g_{t_i}\widetilde u_1(\bx)(\ta', 0)=\sum_{k=1}^{n+1} \gamma_{k} \bv_k,
$$ where $$\widetilde u_1(\bx):=\begin{bmatrix}
        \mathrm{I}_m & -\sigma_m^{-1} J(\bx) \sigma_d & 0\\
        0 & \mathrm{I}_d & 0\\
        0 & 0 & 1
    \end{bmatrix}, \ta'= \sigma_{n} \ta.$$
Now suppose $\gamma_k'$ is the nearest integer of $\gamma_k$, then
$$
\left\Vert \sum_{k=1}^{n+1} \gamma'_{k} \bv_k-g_{c,s} g_{t_i}\widetilde u_1(\bx)(\ta',0)\right\Vert\ll_{n} c^{-1}.
$$
We claim that $\gamma_k'\neq 0$ for some $k.$ %\ls{This is for some $k$, right? But it doens't change the result any way}\sd{Yes, you are right. fixed} 
Otherwise, $\Vert g_{c,s} g_{t_i}\widetilde u_1(\bx)(\ta',0)\Vert\ll_n c^{-1}\implies \Vert g_{t_i}\widetilde u_1(\bx)(\ta',0)\Vert\ll_{c,s, n} 1 $ as $t_i\to\infty.$ This implies $\ta=\mathbf{0}$, which is a contradiction to our assumption.

For ease of notation, we call $\{t_i\}=\mathcal{T}$. From above, for $t\in\mathcal{T},$
there exists $(q, \bp)\in\Z^{n+1}\setminus \{\mathbf{0}\}$ such that 
\begin{equation}\label{eqn: a1}
    \begin{aligned}
        &\vert qf_j(\bx)-\sum_{i=1}^d \partial_if_{j}(\bx)(qx_i-p_i-\theta_i)-p_{d+j}-\theta_{d+j}\vert \ll_n \frac{1}{se^{w_{d+j}t}}, 1\leq j\leq m\\
        & \vert qx_i-p_i-\theta_i\vert \ll_{n} c^{-1} c^{-(m+1)/d}\frac{s^{m/d}}{e^{w_it}} , 1\leq i\leq d,\\
        & q\ll_n  e^t.
    \end{aligned}
\end{equation}
Now note that, 
$$\begin{aligned}
& \left| f_{j} \left(\frac{p_{1}+\theta_1}{q}, \dots , \frac{p_{d}+\theta_d}{q} \right)-f_{j}(\bx)-\sum_{i=1}^d \partial_i f_{j}(\bx) \left(\frac{p_{i}+\theta_i}{q}-x_{i} \right) \right| \\ & < C_{\bx}\max_{1 \leq i \leq d} \left| \frac{p_{i}+\theta_i}{q}-x_{i} \right|^{2}\\
& \stackrel{\eqref{eqn: a1}}{\ll_n} c^{-2} c^{-2(m+1)/d}s^{2m/d} q^{-2} \max_{1\leq i\leq d}\frac{1}{e^{2w_it}},\\
& \stackrel{\eqref{condition on w}}{\ll_n} c^{-2} c^{-2(m+1)/d}s^{2m/d} \frac{1}{q e^{w_{max} t}}\frac{1}{e^{w_{d+j}t} q}, \text{ since } q\geq 1.
\end{aligned}
$$
Since $\mathcal{T}$ is infinite, we get that infinitely many $t\in \mathcal{T}'\subset \mathcal{T}$, we have 
$$\left| f_{j} \left(\frac{p_{1}+\theta_1}{q}, \dots , \frac{p_{d}+\theta_d}{q} \right)-f_{j}(\bx)-\sum_{i=1}^d \partial_i f_{j}(\bx) \left(\frac{p_{i}+\theta_j}{q}-x_{i} \right) \right|< \frac{1}{se^{w_{d+j}t}q}.$$
Hence we get that 
$$\begin{aligned}
&\left\vert  qf_{j} \left(\frac{p_{1}+\theta_1}{q}, \dots , \frac{p_{d}+\theta_d}{q} \right) -p_{d+j}-\theta_{d+j}\right\vert <\frac{1}{se^{w_{d+j}t}},\\
& \vert q x_i-p_i-\theta_i\vert\ll_{n}  c^{-1} c^{-(m+1)/d}s^{m/d}\frac{1}{e^{w_it}}, 1\leq i\leq d,\\
& q\ll_n e^t.
\end{aligned}$$
Since $\bx$ is not of the form $(\frac{p_i+\theta_i}{q})_{i=1}^d,$ and $\mathcal{T}'$ is infinite, the proposition follows.
\end{proof}

\begin{remark}
    Note that in the previous proposition $\ta\neq \mathbf{0}$ is not necessary. One can prove even a stronger statement in case of $\ta=\mathbf{0}$; see \cite[Theorem 10.1]{BDGW_null}.
\end{remark}
Next, we have the following corollary:
\begin{corollary}\label{full-in-F}
   Let $\ta\in\R^n\setminus \{\mathbf{0}\}$, $\f: U\subset\R^d\to\R^n$ be a $C^2$ map satisfying \eqref{parametrize} and $\bw$ be a weight satisfying \eqref{condition on w}. Let $\delta_1,\delta_2>0$ be any two constants. Then for almost every $\bx\in U,$ 
there are infinitely many $(\bp,q)\in\Z^{n+1}$ such that 

\begin{equation}
     \begin{aligned}
            & \left\vert x_i-\frac{p_i+\theta_i}{q}\right\vert\leq \frac{\delta_1}{q^{w_i+1}}, i=1,\cdots, d,\\
            & \left\vert f_{j}\left(\frac{p_1+\theta_1}{q},\cdots, \frac{p_d+\theta_d}{q}\right)- \frac{p_{d+j}+\theta_{d+j}}{q}\right\vert \leq \frac{\delta_2 }{q^{w_{d+j}+1}}, j=1,\cdots,m.
        \end{aligned}
    \end{equation}
\begin{proof}
For $\delta_2>0$, suppose $s_2\in\N$ such that $\frac{1}{s_2}<\delta_2.$%\ls{Do we want $s_2$ instead of $s$?}\sd{Done} 
Now let us define,
$$
S(\delta_2):=\left\{(\bp,q)\in\Z^{n+1}~\left|~\begin{array}{l}(\frac{p_1+\theta_1}{q},\cdots, \frac{p_d+\theta_d}{q})\in U,\\
\left\vert f_{j}\left(\frac{p_1+\theta_1}{q},\cdots, \frac{p_d+\theta_d}{q}\right)- \frac{p_{d+j}+\theta_{d+j}}{q}\right\vert \leq \frac{\delta_2 }{q^{w_{d+j}+1}}, 1\leq j\leq m
\end{array}\right.\right\}.$$
Then for $(\bp,q)\in \Z^{n}\times \Z\setminus\{0\}$ and $\delta_1>0$, let us denote  
$$
B(\delta_1, (\bp,q)):=\prod_{i=1}^dB\left( \frac{p_i+\theta_i}{q}, \frac{\delta_1}{q^{w_{i}+1}}\right).
$$By Corollary \ref{F is full}, $\mathcal{L}_d(\mathcal{F})=\mathcal{L}_d(U).$%\ls{Starting from here, in my understanding, the logic should be:\\ By Proposition \ref{main_prp_null} there exists $\tilde \delta$ such that $$\mathcal{L}_d\left(\limsup_{(\bp,q)\in S(\delta_2)}B(\tilde\delta, (\bp,q))\right)=\mathcal{L}_d\left(\limsup_{(\bp,q)\in S(1/s_2)}B(\tilde\delta, (\bp,q))\cap U\right)=\mathcal L_d(U)$$(the first equal sign follows from definition of $S(1/s_2)$ and might be worthy for a lemma, \sd{ See in Proposition 4.1 the constant depends on $\bx$, so over a ball one can not guarantee that the same constant will work, see below}), then by Lemma \ref{CI_product} and $\delta_2>\frac{1}{s_2}$, we have \begin{equation}\begin{split}\mathcal{L}_d\left(\limsup_{(\bp,q)\in S(\delta_2)}B(\delta_1, (\bp,q))\cap U\right)&=\mathcal{L}_d\left(\limsup_{(\bp,q)\in S(\delta_2)}B(\delta_1, (\bp,q))\right)\\&\geq\mathcal{L}_d\left(\limsup_{(\bp,q)\in S(1/s_2)}B(\delta_1, (\bp,q))\right)\\&=\mathcal{L}_d\left(\limsup_{(\bp,q)\in S(1/s_2)}B(\tilde\delta, (\bp,q))\right)=\mathcal L_d(U).\end{split}\end{equation}} 
Now, by  Proposition \ref{main_prp_null} we get $$\begin{aligned}
\lim_{\delta\to +\infty} \mathcal{L}_d\left(\limsup_{(\bp,q)\in S(1/s_2)}B(\delta, (\bp,q))\cap U\right)=&\mathcal{L}_d\left(\bigcup_{\delta>0} \limsup_{(\bp,q)\in S(1/s_2)}B(\delta, (\bp,q))\cap U\right)\\= &\mathcal{L}_d(U).
\end{aligned}$$
%\ls{The thing confusing me is, why taking $\delta\to0$?, since $\mathcal{L}_d\left(\limsup_{(\bp,q)\in S(1/s_2)}B(\delta, (\bp,q))\cap U\right)$ shrinks as $\delta$ decreases. Do we want to take $\delta\to+\infty$?}
The first equality above is because the measure of the union of increasing sets is the limit of each set's measure and the second equality follows from Proposition \ref{main_prp_null}.

Now by Lemma \ref{CI_product}, for any $\delta>0$, $$\mathcal{L}_d\left(\limsup_{(\bp,q)\in S(1/s_2)}B(\delta, (\bp,q))\cap U\right)=\mathcal{L}_d\left(\limsup_{(\bp,q)\in S(1/s_2)}B(\delta_1, (\bp,q))\cap U\right).$$ By taking a limit $\delta\to +\infty,$ we get 
 $$
 \mathcal{L}_d(U)= \mathcal{L}_d\left(\limsup_{(\bp,q)\in S(1/s_2)}B(\delta_1, (\bp,q))\cap U\right).
 $$
 Since $\delta_2>1/s_2$ combining the above, we get that
$$\mathcal{L}_d\left(\limsup_{(\bp,q)\in S(\delta_2)}B(\delta_1, (\bp,q))\cap U\right)=\mathcal{L}_d(U).$$
%\sd{see if this explanation is satisfactory}
%\sd{ Check again that Lemma \ref{CI_product} actually gives the above.}\ls{See above}
\end{proof}
\end{corollary}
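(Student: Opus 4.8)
The plan is to combine three facts already established: $\mathcal{F}$ has full Lebesgue measure in $U$ (Corollary~\ref{F is full}); on $\mathcal{F}$, Proposition~\ref{main_prp_null} produces, for each $s\in\N$, a constant $c=c(\bx)>0$ and infinitely many transference solutions; and Lemma~\ref{CI_product}, the constant-invariance principle, lets us absorb the uncontrolled constant from Proposition~\ref{main_prp_null} (a power of $s$ times $c^{-1}c^{-(m+1)/d}$, together with the implied $\ll_n$-constant) into the prescribed $\delta_1$. The logical skeleton: full-measure input $\Rightarrow$ a.e.\ $\bx$ lies in $\bigcup_{\delta>0}\limsup$ of the $S(1/s_2)$-family of shrinking boxes $\Rightarrow$ that union is full $\Rightarrow$ by monotonicity in $\delta$ the limsup measure tends to $\mathcal{L}_d(U)$ $\Rightarrow$ by constant invariance each limsup with a fixed constant is already full $\Rightarrow$ monotonicity of the index set $S(\cdot)$ upgrades this to the $S(\delta_2)$-family $\Rightarrow$ unwind definitions.

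Concretely: given $\delta_2>0$ fix $s_2\in\N$ with $1/s_2<\delta_2$, put $S(1/s_2)=\{(\bp,q)\in\Z^{n+1}:(\tfrac{p_1+\theta_1}{q},\dots,\tfrac{p_d+\theta_d}{q})\in U,\ |f_j(\tfrac{p_1+\theta_1}{q},\dots,\tfrac{p_d+\theta_d}{q})-\tfrac{p_{d+j}+\theta_{d+j}}{q}|\le\tfrac{1}{s_2 q^{w_{d+j}+1}}\text{ for all }j\}$ and $B(\delta,(\bp,q))=\prod_{i=1}^d B(\tfrac{p_i+\theta_i}{q},\,\delta q^{-(w_i+1)})$; note $S(1/s_2)\subseteq S(\delta_2)$ since $1/s_2<\delta_2$. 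The rational points excluded in Proposition~\ref{main_prp_null} form a countable, hence $\mathcal{L}_d$-null, set, so by Corollary~\ref{F is full} almost every $\bx\in U$ lies in $\mathcal{F}$ and is not of that form; applying Proposition~\ref{main_prp_null} with $s=s_2$ to such an $\bx$ gives $\bx\in\limsup_{(\bp,q)\in S(1/s_2)}B(\delta,(\bp,q))$ once $\delta$ exceeds a bound depending on $\bx$, hence $\bx\in\bigcup_{\delta>0}\limsup_{(\bp,q)\in S(1/s_2)}B(\delta,(\bp,q))$. Therefore $\mathcal{L}_d(\bigcup_{\delta>0}\limsup_{(\bp,q)\in S(1/s_2)}B(\delta,(\bp,q))\cap U)=\mathcal{L}_d(U)$, and as these sets increase with $\delta$, $\lim_{\delta\to\infty}\mathcal{L}_d(\limsup_{(\bp,q)\in S(1/s_2)}B(\delta,(\bp,q))\cap U)=\mathcal{L}_d(U)$; in particular $S(1/s_2)$ is infinite. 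I would then apply Lemma~\ref{CI_product} to the $d$-fold product $\prod_{i=1}^d\R$, each factor carrying its usual metric and Lebesgue measure (which is doubling), with $(S_i)$ an enumeration of the rational points $(\tfrac{p_1+\theta_1}{q},\dots,\tfrac{p_d+\theta_d}{q})$ over $(\bp,q)\in S(1/s_2)$ arranged so that $q\to\infty$ (possible since each $q$ admits only finitely many relevant $\bp$), and $\bm\delta_i=(q^{-(w_1+1)},\dots,q^{-(w_d+1)})$, whose coordinates tend to $0$. Taking the two scalar tuples to be $(\delta_1,\dots,\delta_1)$ and $(\delta,\dots,\delta)$, the lemma gives $\mathcal{L}_d(\limsup_{(\bp,q)\in S(1/s_2)}B(\delta_1,(\bp,q))\cap U)=\mathcal{L}_d(\limsup_{(\bp,q)\in S(1/s_2)}B(\delta,(\bp,q))\cap U)$, which by the previous line equals $\mathcal{L}_d(U)$. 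Finally $S(1/s_2)\subseteq S(\delta_2)$ gives $\limsup_{(\bp,q)\in S(1/s_2)}B(\delta_1,(\bp,q))\subseteq\limsup_{(\bp,q)\in S(\delta_2)}B(\delta_1,(\bp,q))$, so the latter has full measure in $U$; unwinding the definition of $\limsup$ and of $S(\delta_2)$ — for such an $\bx$ there are infinitely many $(\bp,q)\in S(\delta_2)$ with $|x_i-\tfrac{p_i+\theta_i}{q}|\le\delta_1 q^{-(w_i+1)}$, and membership of $(\bp,q)$ in $S(\delta_2)$ encodes the $f_j$-inequality — yields exactly the corollary. (If $U$ is unbounded one first intersects with a large ball or exhausts $U$ by bounded pieces; this is harmless.)

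The one delicate step is casting the family $\{B(\delta,(\bp,q))\}$ into the framework $\Delta_d(S_i,\bm C\bm\delta_i)$ of Lemma~\ref{CI_product}: one must identify the product metric-measure structure, check $\bm\delta_i\to 0$ coordinatewise, and verify that $S(1/s_2)$ is genuinely infinite so the enumeration exists — this last point follows from the full-measure limsup obtained just before invoking the lemma. The remaining points — absorbing the $\ll_n$-constant and the factor from Proposition~\ref{main_prp_null} into a single $\delta$, finiteness of admissible $\bp$ for each $q$ (the rational point lies in the bounded chart $U$, bounding $p_1,\dots,p_d$, and continuity of $f_j$ bounds $p_{d+j}$), and countability of the excluded rational points — are routine.
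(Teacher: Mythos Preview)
Your proof is correct and follows essentially the same approach as the paper's: fix $s_2$ with $1/s_2<\delta_2$, use Corollary~\ref{F is full} and Proposition~\ref{main_prp_null} to show the union over $\delta$ of the $\limsup$-sets over $S(1/s_2)$ is full, pass through the limit via monotonicity, apply Lemma~\ref{CI_product} to replace $\delta$ by $\delta_1$, and then use $S(1/s_2)\subseteq S(\delta_2)$. You are more explicit than the paper about several technical points (the null set of excluded rationals, casting into the $\Delta_d(S_i,\bm\delta_i)$ framework, arranging $q\to\infty$, infinitude of $S(1/s_2)$, absorbing the $\ll_n$-constant), which is to your credit.
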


\begin{lemma}\label{inside_coro} There exists $C>0$ that only depends on $\f$ and $U$ such that for any $\delta>0,$
    $$\limsup_{(\bp,q)\in S(\delta/2)}B(C^{-1}\delta/2, (\bp,q))\cap U\subset \f^{-1}\mathcal{W}_{\delta}\subset \limsup_{(\bp,q)\in S(C\delta)}B(\delta, (\bp,q))\cap U. 
$$
\end{lemma}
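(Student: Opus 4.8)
The statement is a double inclusion sandwiching $\f^{-1}\mathcal{W}_\delta$ between two $\limsup$ sets indexed by the families $S(\cdot)$, with parameters rescaled by an absolute constant $C$ depending only on $\f$ and $U$. I would unwind the definition of $\mathcal{W}_\delta$: a point $\f(\bx)$ lies in $\mathcal{W}_\delta$ iff for infinitely many $(q,\bp,p_{d+1},\dots,p_{d+m})\in\Z^{n+1}$ one has $\max_i |qx_i - p_i - \theta_i|^{1/w_i} < \delta/|q|$ for the first $d$ coordinates \emph{and} $\max_j |q f_j(\bx) - p_{d+j} - \theta_{d+j}|^{1/w_{d+j}} < \delta/|q|$ for the last $m$ coordinates. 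Equivalently, $|x_i - (p_i+\theta_i)/q| < \delta^{w_i}/|q|^{w_i+1}$ for $1\le i\le d$ and $|f_j(\bx) - (p_{d+j}+\theta_{d+j})/q| < \delta^{w_{d+j}}/|q|^{w_{d+j}+1}$ for $1\le j\le m$. (Here I am absorbing the $\delta^{w_i}$ versus $\delta$ discrepancy into the constant $C$, using $w_i\le 1$ so $\delta^{w_i}\asymp \delta$ up to a fixed power in the relevant range — alternatively one re-scales $\delta$ throughout; either way it only costs an absolute constant.)

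\textbf{Right-hand inclusion.} Suppose $\bx\in\f^{-1}\mathcal{W}_\delta$, witnessed by infinitely many $(\bp,q)$. The first $d$ inequalities say exactly that $\bx\in B(\delta,(\bp,q))$ (up to the harmless rescaling of $\delta$ noted above). For the last $m$, I replace the true value $f_j(\bx)$ by $f_j$ evaluated at the rational point $\big((p_1+\theta_1)/q,\dots,(p_d+\theta_d)/q\big)$: since $|x_i - (p_i+\theta_i)/q|$ is small and $f_j$ is $C^1$ with gradient bounded by $M$ on $U$ (by \eqref{condi2}), a first-order Taylor estimate gives
\[
\Big| f_j\big((p_i+\theta_i)/q\big)_i - f_j(\bx)\Big| \le M d \max_i |x_i - (p_i+\theta_i)/q| \le M d\, \delta / q^{w_i+1} \le M d\,\delta/q^{w_{d+j}+1},
\]
using \eqref{maxlessmin} so that $w_i \ge w_{d+j}$ for the exponents, and $q\ge 1$. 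Adding this to the $m$ inequalities defining membership in $\mathcal W_\delta$ yields $(\bp,q)\in S(C\delta)$ and $\bx\in B(\delta,(\bp,q))$ for a constant $C=C(\f,U)$ (essentially $C = 1 + Md$). Since there are infinitely many such $(\bp,q)$, we get $\bx\in\limsup_{(\bp,q)\in S(C\delta)} B(\delta,(\bp,q))\cap U$.

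\textbf{Left-hand inclusion.} Conversely suppose $\bx\in\limsup_{(\bp,q)\in S(\delta/2)} B(C^{-1}\delta/2,(\bp,q))\cap U$, i.e. there are infinitely many $(\bp,q)\in S(\delta/2)$ with $|x_i-(p_i+\theta_i)/q|< C^{-1}(\delta/2)/q^{w_i+1}$ for all $i\le d$. The first $d$ inequalities needed for $\f(\bx)\in\mathcal W_\delta$ are immediate (even with room to spare, since $C^{-1}(\delta/2)\le\delta$). For the last $m$: by the same Taylor bound as above, $|f_j(\bx) - f_j((p_i+\theta_i)/q)_i| \le Md\cdot C^{-1}(\delta/2)/q^{w_{d+j}+1} \le (\delta/2)/q^{w_{d+j}+1}$ once $C\ge Md$; and since $(\bp,q)\in S(\delta/2)$ we have $|f_j((p_i+\theta_i)/q)_i - (p_{d+j}+\theta_{d+j})/q| \le (\delta/2)/q^{w_{d+j}+1}$. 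The triangle inequality then gives $|f_j(\bx)-(p_{d+j}+\theta_{d+j})/q| \le \delta/q^{w_{d+j}+1}$ for all $j\le m$. Hence all $n$ inhomogeneous inequalities hold for infinitely many $(\bp,q)$, so $\f(\bx)\in\mathcal W_\delta$, i.e. $\bx\in\f^{-1}\mathcal W_\delta$.

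\textbf{Main obstacle.} There is no deep obstacle here — the proof is a routine Taylor-expansion sandwich. The one point requiring care is the bookkeeping of exponents: the mean value estimate naturally produces an error of size $\asymp \max_i |x_i-(p_i+\theta_i)/q|$ which is controlled by $q^{-(w_i+1)}$ with $w_i = w_{\max}$, and one must invoke \eqref{maxlessmin} (equivalently \eqref{condition on w}) to dominate this by $q^{-(w_{d+j}+1)}$, the scale relevant for the $f_j$-inequalities, using also $q\ge 1$. A secondary nuisance is reconciling the $\delta^{w_i}$ scaling inherent in the definition of $\mathcal W_\delta$ with the $\delta$-scaling of $B(\delta,(\bp,q))$; this is absorbed into $C$ by working on a fixed bounded range of $q$... in fact, more cleanly, one observes that $B(\delta,(\bp,q))$ is defined directly via the radii $\delta/q^{w_i+1}$ so that the statement as phrased is already internally consistent and $C$ only needs to account for the Lipschitz constant $Md$ of $\f$.
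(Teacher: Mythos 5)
Your argument is correct and is essentially the paper's own proof: both inclusions are obtained by a Taylor/mean-value estimate using the derivative bound $M$ from \eqref{condi2}, the weight comparison \eqref{maxlessmin} together with $q\ge 1$ to trade $q^{-(w_{\max}+1)}$ for $q^{-(w_{d+j}+1)}$, and a triangle inequality, with $C$ a fixed multiple of $Md$ (the paper takes $C=\max\{2(Mm+1)+1,\,Md(1+d)+1\}$). Your aside about the $\delta^{w_i}$ versus $\delta$ scaling also mirrors the paper, which silently reads the inequalities defining $\mathcal{W}_\delta$ with radii $\delta/\vert q\vert^{w_i+1}$; since the theorem only uses the intersection over all $\delta>0$, this convention is harmless.
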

%\ls{Why do we call it corollary?}\sd{ you are right, we should may be call it a Lemma. Done}
\begin{proof}
Let us fix $C=\max\{2(Mm+1)+1, Md(1+d)+1\},$ where $M$ is as in \eqref{condi2}.  
    Suppose $\bx\in \f^{-1}\mathcal{W}_{\delta}$, then there are infinitely many $(\bp,q)\in\Z^{n+1}$ such that 
$$\begin{aligned}
    &\left\vert x_i-\frac{p_i+\theta_i}{q}\right\vert<\frac{\delta}{\vert q\vert^{w_i+1}}, \quad i=1,\cdots, d,\\
    &\left\vert f_{j}(\bx)-\frac{p_{d+j}+\theta_{d+j}}{q}\right\vert \leq \frac{\delta}{\vert q\vert^{w_{d+j}+1}}, \quad j=1,\cdots,m.
\end{aligned}$$

Thus by Taylor's expansion and using \eqref{condi2}, 
$$\begin{aligned}
&\left\vert f_{j}\left(\frac{p_1+\theta_1}{q},\cdots, \frac{p_d+\theta_d}{q} \right)-\frac{p_{d+j}+\theta_{d+j}}{q}\right\vert\\
&\leq \frac{\delta Mm}{\vert q\vert^{w_{max}+1}}+\frac{\delta}{\vert q\vert^{w_{d+j}+1}}, \quad j=1,\cdots,m.
\end{aligned}$$
This proves the right-hand side of inclusion in the corollary by since $C> 2 (Mm+1)$.

Now suppose $\bx\in \limsup_{(\bp,q)\in S(\delta)}B(C^{-1}\delta, (\bp,q))\cap U,$ this means there are infinitely many $(\bp, q)\in \Z^{n+1}$ such that 
$$\begin{aligned}
    &\left\vert x_i-\frac{p_i+\theta_i}{q}\right\vert<\frac{C^{-1}\delta}{\vert q\vert^{w_i+1}}, \quad i=1,\cdots, d,\\
    &\left\vert f_{j}\left(\frac{p_1+\theta_1}{q},\cdots, \frac{p_d+\theta_d}{q} \right)-\frac{p_{d+j}+\theta_{d+j}}{q}\right\vert \leq \frac{\delta}{\vert q\vert^{w_{d+j}+1}}, \quad j=1,\cdots,m.
\end{aligned}$$
Note that for the above $(\bp, q)$
$$\begin{aligned}
&\left \vert f_{j}(\bx)-f_{j}\left(\frac{p_1+\theta_1}{q},\cdots, \frac{p_d+\theta_d}{q} \right)\right\vert \\
& \leq \sum_{i=1}^d\partial_if_{j}(\bx)\left\vert x_i-\frac{p_i+\theta_i}{q}\right\vert+ M d^2\max_{i=1}^d \left \vert x_i-\frac{p_i+\theta_i}{q}\right\vert^2\\  
& \stackrel{\eqref{condition on w}}{\leq} (Md+Md^2) \max_{i=1}^d\frac{C^{-1}\delta}{\vert q\vert^{w_i+1}} \\
& \stackrel{\eqref{maxlessmin}}{\leq} Md(1+d)C^{-1}\frac{\delta}{\vert q\vert^{w_{d+j}+1}},\quad j=1,\cdots,m.
\end{aligned}
$$
Since $Md(1+d)<C,$
we get 
 $$\left \vert f_{j}(\bx)- \frac{p_{d+j}+\theta_{d+j}}{q}\right\vert\leq \frac{2\delta}{\vert q\vert^{w_{d+j}+1} }, \quad j=1,\cdots, m.
 $$

\end{proof}

\subsection{Finishing the proof of Theorem \ref{measure-zero-theorem}}
Note that $\mathbf f^{-1}(\mathbf{Bad}_{\ta}(\bw))= U\setminus \bigcap_{\delta>0} \f^{-1}\mathcal{W}_{\delta}.$
    By Corollary \ref{full-in-F} and Lemma \ref{inside_coro}
    $\mathcal{L}_d\left(\f^{-1}\mathcal{W}_{\delta}\right)=\mathcal{L}_d(U).$
Thus $$\mathcal L_d(\mathbf f^{-1}(\mathbf{Bad}_{\ta}(\bw)))=0.$$

\subsection{Proof of Corollary \ref{analytic}}
Let $\mathcal{M}$ be an analytic nondegenerate manifold of dimension $d$ in $\R^n$. Without loss of generality, we can assume $\mathcal{M}$ to be an image of $\f=(f_i)_{i=1}^n:B\to \R^n$, where $B=(-t_0, t_0)\times \prod_{i=2}^d (-u_i, u_i)\subset \R^d$ is a rectangle centered at $0\in\R^d.$ Let us define $$W:=\left\{(t,u_2,\cdots, u_d)\in\R^d~\left|~\begin{aligned}
    &tu_2\cdots u_d\neq 0\\
    & (t^{1+s^d}, u_2 t^{s+s^d}, \cdots, u_d t^{s^{d-1}+s^d})\in B
\end{aligned}\right.\right\}$$ an open set. %\ls{In order that $\phi$ is a bijection, we need to exclude $\{0\}\times\R^{d-1}$ from $W$, otherwise whenever $\phi$ would send the input to 0 whenever $t=0$.}\sd{yes, you are right. Fixed} 
Also denote 
$$B^\star:=\{(x_1,\cdots, x_d)\in B~|~x_1x_2\cdots x_d\neq 0\}.$$ %\text{ if } x_1\neq 0\}.$$
%\ls{Same as above. We might need to replace this condition by $x_1...x_d\neq0$, which still is full measure.}\sd{Fixed}
Let $\phi: \R^d\to\R^d$ be 
$$\phi(t,\bu):=(t^{1+s^d}, u_2 t^{s+s^d}, \cdots, u_d t^{s^{d-1}+s^d}),$$ which is a bijection between $W$ and $B^\star.$ Take $\bu=(u_2,\cdots,u_d)$ such that there exists some $t$ such that $(t,u_2,\cdots,u_d)\in W$, and for such $\bu$, let us denote $W_{\bu}:=\{t~|~(t,\bu)\in W\}$.

%Take any $\bu=(u_1,\cdots, u_d)\in B.$ Now define, 
%$$\phi_{\bu}:(0,1)\to\R^d, \phi_{\bu}(t):=(u_1t^{1+s^d}, u_2 t^{s+s^d}, \cdots, u_d t^{s^{d-1}+s^d}).$$

%For every $\bu^0=(u_i^0)_{i=1}^d\in B$, we show that there exists an open ball $B_0\subset B$ such that  $\bu^0\in B_0$ and 
%$$\mathcal{L}_d(B_0\cap \f^{-1}\Bad_{\ta}(\bw))=0.$$ Now for every 
For every such $\bu$, 
let us define $\f_{\bu}:W_{\bu}\to \R^n$ where each coordinate map is defined as follows:
$$f_{\bu,i}(t):= f_i(t^{1+s^d}, u_2 t^{s+s^d}, \cdots, u_d t^{s^{d-1}+s^d}), 1\leq i\leq n.$$ 
By Fibering lemma \cite[p. 1206]{Beresnevich2015}, there exists $s>0$ such that for every $\bu$ under consideration, $1, f_{\bu,1},\cdots, f_{\bu,n}$ are linearly independent over $\R$. Since $\f_{\bu}$ is analytic and coordinate maps are linearly independent together with $1$, for  almost every $t\in W_{\bu}$, $\f_{\bu}$ is nondegenerate. Now using Theorem \ref{measure-zero-theorem} 
$$\mathcal{L}_1(\f_{\bu}^{-1} \Bad_{\ta}(\bw)\cap W_{\bu})=0.$$

Now using Fubini's theorem, we get that 
$$
\mathcal{L}_d\{(t,\bu)\in W ~|~\f_{\bu}(t)\in \Bad_{\ta}(\bw)\}=0.$$
Since $\f_{\bu}(t)=\f(\phi(t,\bu))$, $\phi$ is a bijection between $W$ and $B^\star$ we get that 
$$\mathcal{L}_d( \f^{-1}\Bad_{\ta}(\bw)\cap B^\star)=0.$$ We conclude as $\mathcal{L}_d(B)=\mathcal{L}_d(B^\star).$

\section{Future direction}

The following is a natural generalization of the result in this paper to the dual analogue of inhomogeneous badly approximable vectors.

For any constant $
\theta\in\R,$ we define the following set:
\begin{equation}
\label{def: dual_inho}
    \widetilde{\mathbf{Bad}_{\theta}(\mathbf{w})}:=\{\mathbf{x}\in\mathbb{R}^d:\liminf_{\bq=(q_i)\in\Z^d\setminus\{0\}, \Vert \bq\Vert\to\infty}\vert \bq\cdot \bx-\theta\vert_{\mathbb{Z}}  \max_{1\leq i\leq d}\vert q_i\vert^{1/w_i} >0 \}.
\end{equation} 
In \cite{Beresnevich2015}, it was shown that $\widetilde{\mathbf{Bad}_{0}(\mathbf{w})}=\mathbf{Bad}_{\mathbf{0}}(\mathbf{w}),$ using Khintchine’s transference principle. But in the inhomogeneous setting, the dual and simultaneous settings are not related. Therefore, it is interesting to consider the following problem.
\begin{problem}\label{dual problem}
    For any weight $\bw$ in $\R^d$, does $\widetilde{\mathbf{Bad}_{\theta}(\mathbf{w})}$ have HAW property? %Is $\widetilde{\mathbf{Bad}_{\theta}(\mathbf{w})}$ Lebesgue measure zero on nondegenerate manifolds in $\mathbb R^n$?
\end{problem}
 In the case of standard weight, we get affirmative answer by \cite{ET}. Thus extending the same for any weight is a charming direction. Also studying the differences of two badly approximable sets following \cite{DD24} in higher dimensions directs to many interesting questions.
 
As a continuation of our second main Theorem \ref{measure-zero-theorem} and Corollary \ref{analytic} we want to pose the following question. 
\begin{problem}\label{last qs}
   Let $\bw$ be a weight in $\R^n$, $\theta\in\R$, and $\ta\in\R^n$. Let $\mathcal{M}$ be a $C^2$ manifold in $\R^n$. Is it true that for almost every point in $\mathcal{M}$ is not in $\mathbf{Bad}_{\ta}(\bw)$? Is it true that for almost every point in $\mathcal{M}$ is not in $\widetilde{\mathbf{Bad}_{\theta}(\bw)}$?
\end{problem}

When $\ta=\mathbf\{\mathbf{0}\}$, then an affirmative answer to the above problem was shown in \cite{BDGW_null}. As pointed out before, and as seen in this paper, the problem is more difficult in the inhomogeneous case and some new ideas are required. We end this section with two more directions to explore.

Problem stated in \ref{prob_null} can be analogously asked for other subsets of $\R^n$ replacing submanifolds. For instance, the attractor of an \textit{iterated function system} intersected with $\Bad(1)$ was shown to be null with respect to some self-similar measures in $\R$ \cite{EFS}, see \cite{SW2019} for higher dimensions $\Bad(\frac{1}{d}, \cdots, \frac{1}{d})$, see \cite{AG24} for inhomogeneous analogue $\Bad_{\ta}(\frac{1}{d}, \cdots, \frac{1}{d})$. In the weighted set-up, a very recent work \cite{Khalil_LW25} shows for every weight $\bw$ in $\R^d$, $\Bad(\bw)$ is null intersected with support of certain self similar measures. Pursuing analogous question as in Problem \ref{last qs} in these \textit{fractal} set-up (and more generally, beyond the self similarity condition, for any Borel probability measure) seems quite natural and interesting.

\subsection*{Acknowledgements} The authors greatly acknowledge
the initial support from the University of Michigan where this work started. SD thanks Ralf Spatzier for his support and encouragement when this work was initiated. She is indebted to Victor Beresnevich for various insightful remarks and discussions that helped us immensely, especially Corollary \ref{analytic} was suggested by him. The authors also thank Anish Ghosh for his comments on an earlier draft of this preprint. SD was in part supported by the Knut and Alice Wallenberg Foundation and also by the EPSRC grant EP/Y016769/1.

\bibliographystyle{abbrv}
\bibliography{inhomo_curve.bib}

\end{document}